\theoremstyle{plain}
\newtheorem*{theorem*}{Theorem}
\newtheorem*{conjecture*}{Conjecture}
\newtheorem*{conjectureA}{Conjecture A}
\newtheorem*{conjectureA*}{Conjecture A*}
\newtheorem*{conjectureB}{Conjecture B}
\newtheorem*{conjectureC}{Conjecture C}
\newtheorem{thm}{Theorem}[section]
\newtheorem{theorem}[thm]{Theorem}
\newtheorem{corollary}[thm]{Corollary}
\newtheorem{lemma}[thm]{Lemma}
\newtheorem*{lemma*}{Lemma}
\newtheorem{proposition}[thm]{Proposition}
\theoremstyle{definition}
\theoremstyle{remark}
\font\tencyr=wncyr10 \def\russe{\tencyr\cyracc}
\def\Sha{\text{\russe{Sh}}}
\def\FSha{\text{\russe{Zh}}}
\DeclareMathOperator{\Gal}{Gal}
\DeclareMathOperator{\rank}{rank}
\DeclareMathOperator{\corank}{corank}
\DeclareMathOperator{\ord}{ord}
\DeclareMathOperator{\Selm}{Sel}
\DeclareMathOperator{\img}{img}
\DeclareMathOperator{\coker}{coker}
\DeclareMathOperator{\Hom}{Hom}
\DeclareMathOperator{\res}{res}
\DeclareMathOperator{\dual}{dual}
\DeclareMathOperator{\divs}{div}
\newcommand{\Q}{{\mathbb{Q}}}
\newcommand{\Z}{{\mathbb{Z}}}
\newcommand{\N}{{\mathbb{N}}}
\newcommand{\C}{{\mathbb{C}}}
\newcommand{\Fp}{{\mathbb{F}_p}}
\newcommand{\Zp}{{\mathbb{Z}_p}}
\newcommand{\Zpu}{{\mathbb{Z}^{\times}_p}}
\newcommand{\Zl}{{\mathbb{Z}_l}}
\newcommand{\Qp}{{\mathbb{Q}_p}}
\newcommand{\ilim}{\mathop{\varprojlim}\limits}
\newcommand{\dlim}{\mathop{\varinjlim}\limits}
\newcommand{\Sel}{{\Selm_p}}
\newcommand{\Selinf}{{\Selm_{p^{\infty}}}}
\newcommand{\cN}{\mathcal{N}}
\newcommand{\cO}{\mathcal{O}}
\newcommand{\cy}[1]{\mathbb{Z}/#1\mathbb{Z}}
\newcommand{\fp}{\mathfrak{p}}
\newcommand{\overbar}[1]{\mkern 1.5mu\overline{\mkern-1.5mu#1\mkern-1.5mu}\mkern 1.5mu}
\newcommand{\joinrelshort}{\mathrel{\mkern-9mu}}
\newcommand{\shortlongrightarrow}{\relbar\joinrelshort\rightarrow}
\newcommand{\isomarrow}{\mathrel{\mathop{\setbox0\hbox{$\mathsurround0pt
        \shortlongrightarrow$}\ht0=0.7\ht0\box0}\limits
        ^{\sim\mkern2mu}}}
\begin{document}

\title{Fine Selmer Groups, Heegner points and Anticyclotomic $\Zp$-extensions}

\author{Ahmed Matar}

\address{Department of Mathematics\\
         University of Bahrain\\
         P.O. Box 32038\\
         Sukhair, Bahrain}
\email{amatar@uob.edu.bh}

\begin{abstract}
Let $E/\Q$ be an elliptic curve, $p$ a prime and $K_{\infty}/K$ the anticyclotomic $\Zp$-extension of a quadratic imaginary field $K$ satisfying the Heegner hypothesis. In this paper we make a conjecture about the fine Selmer group over $K_{\infty}$. We also make a conjecture about the structure of the module of Heegner points in $E(K_{\fp_{\infty}})/p$ where $K_{\fp_{\infty}}$ is the union of the completions of the fields $K_n$ at a prime of $K_{\infty}$ above $p$. We prove that these conjectures are equivalent. When $E$ has supersingular reduction at $p$ we also show that these conjectures are equivalent to the conjecture in our earlier work. Assuming these conjectures when $E$ has supersingular reduction at $p$, we prove various results about the structure of the Selmer group over $K_{\infty}$.
\end{abstract}

\maketitle

\section{Introduction}

Let $K$ be an imaginary quadratic field with discriminant $d_K \neq -3,-4$ whose class number we will denote by $h_K$.

Let $p \geq 5$ be a prime and $E$ an elliptic curve of conductor $N$ defined over $\Q$ with a modular parametrization $\pi: X_0(N) \to E$. We shall say that $(E,p)$ satisfies $(*)$ if the following are met:
\begin{enumerate}[(i)]
\item All the primes dividing $N$ split in $K/\Q$
\item $p$ does not divide $Nd_Kh_K \varphi(Nd_K)$
\item $\Gal(\Q(E[p])/\Q)=GL_2(\Fp)$
\item If $E$ has ordinary reduction at $p$ then
\begin{enumerate}
\item $p \nmid \#E(\Fp)$
\item $a_p \not \equiv -1 \; (\text{mod } p)$ if $p$ is inert in $K/\Q$
\item $a_p \not \equiv 2 \; (\text{mod } p)$ if $p$ splits in $K/\Q$
\end{enumerate}
\end{enumerate}

We shall say that $(E, \pi, p)$ satisfies $(*)$ if $(E,p)$ satisfies $(*)$ and furthermore $p$ does not divide the number of geometrically connected components of the kernel of $\pi_*: J_0(N) \to E$.

Let $K_{\infty}/K$ be the anticyclotomic $\Zp$-extension of $K$, $\Gamma=\Gal(K_{\infty}/K)$ and $K_n$ the unique subfield of $K_{\infty}$ containing $K$ such that $\Gal(K_n/K) \cong \cy{p^n}$. Denote $\Gamma_n=\Gamma^{p^n}$, $G_n=\Gamma/\Gamma_n$ and $R_n=\Fp[G_n]$.

Let $\Lambda=\Zp[[\Gamma]]$ be the Iwasawa algebra attached to $K_{\infty}/K$. Fixing a topological generator $\gamma \in \Gamma$ allows us to identify $\Lambda$ with the power series ring $\Zp[[T]]$. Also consider the ``mod $p$'' Iwasawa algebra $\overbar{\Lambda}=\Lambda/p\Lambda=\Fp[[T]]$.

Now let $E'$ be a strong Weil curve in the isogeny class of $E$ i.e. there exists a modular parametrization $\pi': X_0(N) \to E'$ which maps the cusp $\infty$ of $X_0(N)$ to the origin of $E'$ such that the induced map $\pi'_*: J_0(N) \to E'$ has a geometrically connected kernel.

If we assume that all the primes dividing $N$ split in $K/\Q$, then choosing an ideal $\cN$ of $\cO_K$ such that $\cO_K/\cN \cong \cy{N}$ allows us to define a family of Heegner points $\alpha_n \in E(K_n)$ using the modular parametrization $\pi$ and a family of Heegner points $\alpha'_n \in E'(K_n)$ using the modular parametrization $\pi'$ (see section 2). In \cite{Matar} we made the following conjecture

\begin{conjectureA*}
Assume that $(E,p)$ satisfies $(*)$, $p$ splits in $K/\Q$ and $E$ has supersingular reduction at $p$ then the $\overbar{\Lambda}$-submodule of $E'(K_{\infty})/p$ generated by the Heegner points $\alpha'_n$ has $\overbar{\Lambda}$-corank greater than or equal to two.
\end{conjectureA*}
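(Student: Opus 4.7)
The plan is to exploit the supersingular norm relations for Heegner points (Kobayashi's plus/minus decomposition) to produce two ``independent'' families of classes in $E'(K_\infty)/p$, and then to show each family generates a submodule of $\overbar{\Lambda}$-corank at least one.

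First I would unpack the trace relations. Under the hypotheses of Conjecture A$^{*}$, supersingularity at $p\geq 5$ forces $a_p=0$, and the standard Heegner point norm relations degenerate to
\[
\operatorname{Tr}_{K_{n+1}/K_n}(\alpha'_{n+1}) = -\alpha'_{n-1} \qquad (n\geq 1),
\]
with an analogous relation between $\alpha'_1$ and $\alpha'_0$. This is precisely the input needed for Kobayashi's construction, so one can package the $\alpha'_n$ into two sequences $\alpha'^{\,+}_{n}$, $\alpha'^{\,-}_n$ of $\Z_p$-linear combinations that are genuinely norm compatible along the appropriate sub-towers (even indices versus odd indices). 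Each of these sequences defines a $\Lambda$-homomorphism from a suitable Kobayashi ``$\pm$'' module into $\ilim_n E'(K_n)\otimes\Z_p$, and hence, after reducing modulo $p$ and Pontryagin-dualising, contributes a $\overbar\Lambda$-quotient of the Heegner submodule inside $E'(K_\infty)/p$.

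Second, I would invoke the Cornut--Vatsal nonvanishing theorem to guarantee that for $n\gg 0$ each of $\alpha'^{\,+}_n$ and $\alpha'^{\,-}_n$ is of infinite order in $E'(K_n)$, and moreover that its image in $E'(K_n)/pE'(K_n)$ is nonzero. The precise statement I would need is a mod-$p$ version: that the Galois orbit of $\alpha'^{\,\pm}_n$ spans a submodule of $E'(K_n)/p$ of $\Fp[G_n]$-rank tending to infinity with $n$. This is the mod-$p$ analogue of Cornut's statement and should follow by combining Cornut--Vatsal with Ihara's lemma and the big-image hypothesis $\Gal(\Q(E[p])/\Q)=GL_2(\Fp)$ from condition (iii) of $(*)$. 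Growing $G_n$-rank then translates, by a standard limit argument, to the generated $\overbar\Lambda$-module having positive $\overbar\Lambda$-corank on each of the plus and minus sides.

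Third, I would argue that the plus and minus contributions are independent, so that together they produce a $\overbar\Lambda$-corank of at least two. Independence should come from the fact that the plus and minus local conditions at the two primes of $K$ above $p$ are complementary in the sense of Kobayashi, so a class that is simultaneously plus and minus at a prime above $p$ is forced to satisfy a strictly smaller local condition which (under the $GL_2$-image hypothesis and conditions (iv)(a)--(c) of $(*)$) cuts the corank by at least one. Concretely, one projects the Heegner submodule onto the quotient by the plus-Selmer local condition and shows it is still nonzero using the odd-indexed Heegner points, and vice versa.

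The main obstacle is the second step: passing from the Cornut--Vatsal nonvanishing of individual Heegner points over $\Q$ or $K$ to a genuine $\overbar\Lambda$-corank lower bound on the module they generate. This requires either a uniform mod-$p$ Chebotarev-style nonvanishing of Heegner points over every layer $K_n$, or an input from Iwasawa-theoretic control of the fine Selmer group (which is exactly the content of the companion conjecture the paper studies). This is very likely the reason the statement is formulated as a conjecture and attacked in this paper via equivalences rather than head-on.
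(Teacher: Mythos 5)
The statement you are trying to prove is a \emph{conjecture}, not a theorem of the paper: the paper proves no unconditional proof of Conjecture A$^{*}$ (or of the strengthening Conjecture A). What the paper actually does is establish, under $(*)$ and the supersingular hypotheses, that Conjecture A is equivalent to Conjecture B (cofinite generation of the fine Selmer group over $K_\infty$) and to Conjecture C(ii), and then deduce consequences for the Selmer group \emph{assuming} these conjectures. So there is no ``paper proof'' to compare against; you are attempting to settle an open problem.

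Within your sketch there is a genuine gap, and it sits in your third step rather than your second. Steps 1--2 are fine in outline: the trace relation $\mathrm{Tr}_{K_{n+1}/K_n}(\alpha'_{n+1})=-\alpha'_{n-1}$ and the Cornut--Vatsal nonvanishing do yield that neither $\dlim R_{2n}\alpha'_{2n}$ nor $\dlim R_{2n+1}\alpha'_{2n+1}$ is $\overbar{\Lambda}$-cotorsion (this is already Theorem 4.1 of \cite{Matar}, invoked as known in the present paper). The real obstruction is making the plus and minus contributions independent, and your argument there does not work: the local complementarity $E^{+}(K_{\fp_\infty})\otimes\Fp\,\cap\,E^{-}(K_{\fp_\infty})\otimes\Fp = E(\Qp)\otimes\Fp$ controls only the image of Heegner classes in $H^1(K_{\fp_\infty},E[p])$, so a class lying in both $\dlim R_{2n}\alpha'_{2n}$ and $\dlim R_{2n+1}\alpha'_{2n+1}$ need not be small if its restriction at the primes above $p$ is trivial, i.e.\ if it is a \emph{fine} Selmer class. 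Nothing in the big-image hypothesis rules this out; and note that conditions (iv)(a)--(c) of $(*)$, which you invoke, are only imposed in the ordinary case and are vacuous under supersingular reduction. Bounding the global intersection by $\Selm^1_p(E/K_\infty)$ and then controlling that quantity via the fine Selmer group is precisely the content of the paper's Theorem \ref{main_theorem1}, and it shows the required finiteness is \emph{equivalent} to Conjecture B, not a free consequence of Kobayashi plus/minus machinery. You do gesture at this at the end, but you locate the obstacle in upgrading Cornut--Vatsal to a mod-$p$ statement; that part is already handled. The irreducible difficulty is the fine Selmer control, which remains open.
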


Assuming this conjecture, we proved in \cite{Matar} (using the same notation in that paper) that if $E$ has supersingular reduction at $p$ and $p$ splits in $K/\Q$ then the $\Lambda$-corank of $\Selinf(E/K_{\infty})$ is equal to 2 and that $X_{p^{\infty}}(E/K_{\infty})=\{0\}$.

We now make the slightly stronger conjecture

\begin{conjectureA}
Assume that $(E, \pi, p)$ satisfies $(*)$, $p$ splits in $K/\Q$ and $E$ has supersingular reduction at $p$ then the $\Gamma$-submodule of $E(K_{\infty})/p$ generated by the Heegner points $\alpha_n$ has $\overbar{\Lambda}$-corank greater than or equal to two.
\end{conjectureA}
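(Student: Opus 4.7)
My plan is to deduce Conjecture~A from Conjecture~A* by transporting the statement across the isogeny that relates $E$ to the strong Weil curve $E'$. The hypothesis on $(E,\pi,p)$ involving the geometrically connected components of $\ker(\pi_*)$ is tailor-made so that this isogeny is invisible to the $p$-parts of $E'(K_\infty)$ and $E(K_\infty)$.

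By the universal property of the optimal quotient $\pi'_*\colon J_0(N)\twoheadrightarrow E'$, the morphism $\pi_*\colon J_0(N)\twoheadrightarrow E$ factors as $\pi_* = \phi\circ\pi'_*$ for a unique isogeny $\phi\colon E'\to E$. Since $\ker(\pi'_*)$ is geometrically connected, $\ker(\pi_*) = (\pi'_*)^{-1}(\ker\phi)$ has precisely $|\ker(\phi)(\overline{\Q})| = \deg(\phi)$ geometric connected components. Hence the component hypothesis in $(*)$ for $(E,\pi,p)$ is equivalent to $p\nmid\deg(\phi)$. Applying both sides of $\pi_* = \phi\circ\pi'_*$ to the Heegner divisor $(P_n)-(\infty)$ yields $\alpha_n = \phi(\alpha'_n)$ for every $n$.

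Since $\deg(\phi)$ is prime to $p$, the dual isogeny identity $\hat\phi\circ\phi = [\deg\phi]$ shows that $\phi$ induces a $\Gamma$-equivariant (hence $\overline{\Lambda}$-linear) isomorphism $\phi_*\colon E'(K_\infty)/p \isomarrow E(K_\infty)/p$ sending $\alpha'_n$ to $\alpha_n$. Consequently, the $\overline{\Lambda}$-submodule of $E'(K_\infty)/p$ generated by $\{\alpha'_n\}_n$ maps isomorphically onto the $\overline{\Lambda}$-submodule of $E(K_\infty)/p$ generated by $\{\alpha_n\}_n$, preserving $\overline{\Lambda}$-corank. Moreover, because each $\alpha_n\in E(K_n)/p$ is annihilated by $\gamma^{p^n}-1$, the $\overline{\Lambda}$-action on it factors through $R_n$, so the $\Gamma$-submodule and the $\overline{\Lambda}$-submodule of $E(K_\infty)/p$ generated by the family $\{\alpha_n\}_n$ coincide. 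Applying Conjecture~A* to $(E',\pi')$ then transfers the corank bound and completes the reduction.

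The genuinely hard input is Conjecture~A* itself, which remains open; this is the main obstacle. Within the reduction, the only step requiring care is checking that $\alpha_n$ and $\alpha'_n$ are built from the same CM point $P_n\in X_0(N)(K_n)$ and the same ideal $\cN$, so that $\phi(\alpha'_n) = \alpha_n$ holds on the nose. The use of the degree-zero divisor $(P_n)-(\infty)$ in the construction ensures that $\pi_*$ is well-defined independently of whether $\pi$ sends the cusp $\infty$ to the origin, so the identity $\pi_* = \phi\circ\pi'_*$ really does propagate to the Heegner points; everything else is routine bookkeeping about isogenies of degree prime to~$p$.
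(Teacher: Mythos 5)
Conjecture~A is stated in the paper exactly as that --- a conjecture --- and the paper provides no proof of it. Indeed, none of Conjectures~A, B, C is established unconditionally; the theorems in the paper only prove equivalences among them and draw consequences conditional on them. So there is no ``paper's own proof'' against which your argument can be compared, and your argument in any case does not purport to prove the statement: it is a \emph{reduction} of Conjecture~A to Conjecture~A*, and you say so yourself.

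As a reduction, the argument looks correct. The factorization $\pi_*=\phi\circ\pi'_*$ follows from optimality of $E'$ together with multiplicity one for $J_0(N)$; the component hypothesis in $(*)$ for $(E,\pi,p)$ is exactly $p\nmid\deg\phi$, since $\ker\pi_*=(\pi'_*)^{-1}(\ker\phi)$ is a disjoint union of $\deg\phi$ cosets of the connected $\ker\pi'_*$; the identity $\alpha_n=\phi(\alpha'_n)$ holds because both parametrizations send the cusp $\infty$ to the origin, so $\pi(P)=\pi_*\bigl((P)-(\infty)\bigr)=\phi\bigl(\pi'(P)\bigr)$; and the dual-isogeny identity $\hat\phi\circ\phi=[\deg\phi]$ with $\deg\phi$ prime to $p$ forces $\phi_*\colon E'(K_{\infty})/p\to E(K_{\infty})/p$ to be a $\Gamma$-equivariant isomorphism carrying $\alpha'_n$ to $\alpha_n$, so the $\overbar{\Lambda}$-corank transfers. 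Your remark that the $\Gamma$-submodule and the $\overbar{\Lambda}$-submodule generated by the $\alpha_n$ coincide (because each $\alpha_n$ is discrete for the $\Gamma$-action) is also correct and necessary, since A is phrased with the $\Gamma$-submodule and A* with the $\overbar{\Lambda}$-submodule.

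It is worth flagging, however, that the paper takes the opposite stance: it asserts only that ``conjecture A implies conjecture A*,'' calls A ``slightly stronger,'' and says the author could not show the main results are isogeny-invariant. Your reduction, if correct, proves the converse implication and hence that A and A* are equivalent whenever $(E,\pi,p)$ satisfies $(*)$. That is a useful observation about the logical architecture of the paper and arguably simplifies its exposition. But it does not discharge the real burden: both A and A* remain open, so you have not proved Conjecture~A --- you have merely shown it is no harder than Conjecture~A*.
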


It is easy to see that conjecture A implies conjecture A*. We proved theorem B of \cite{Matar} for a strong Weil curve $E'$ isogenous to $E$. As theorem B was invariant under isogeny, conjecture A* sufficed for our purposes. The author has not been able to prove that the results in this paper are invariant under isogeny which is the reason we have chosen to work with the slightly stronger conjecture A.

We now define the fine $l^{\infty}$-Selmer group. Assume that $l$ is an odd prime, $F$ a number field and $\mathcal{E}$ is an elliptic curve defined over $F$. Let $S$ be a finite set of primes of $F$ containing all the primes dividing $l$ and all the primes where $\mathcal{E}$ has bad reduction. We let $F_S$ be the maximal extension of $F$ unramified outside $S$. Suppose now that $L$ is a field with $F \subseteq L \subseteq F_S$. We let $G_S(L)=\Gal(F_S/L)$ and $S_L$ be the set of primes of $L$ above those in $S$. We define the fine $l^{\infty}$-Selmer group of $\mathcal{E}/L$ as

$$\displaystyle 0 \longrightarrow R_{l^{\infty}}(\mathcal{E}/L) \longrightarrow H^1(G_S(L), \mathcal{E}[l^{\infty}]) \longrightarrow \prod_{v \in S_L} H^1(L_v, \mathcal{E}[l^{\infty}])$$

If $l$ is a fixed prime then for any number field $F$ we let $F^{cyc}$ denote the cyclotomic $\Zl$-extension of $F$. In \cite{CS} Coates and Sujatha make the following conjecture

\begin{conjecture*}[Coates-Sujatha]
If $l$ is an odd prime, $F$ a number field and $\mathcal{E}$ an elliptic curve defined over $F$, then $R_{l^{\infty}}(\mathcal{E}/F^{cyc})$ is a cofinitely generated $\Zl$-module.
\end{conjecture*}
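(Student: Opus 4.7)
The plan is to dualise and reduce the statement to Iwasawa's classical $\mu$-invariant conjecture for ideal class groups. Writing $Y_l(\cE/F^{cyc})$ for the Pontryagin dual of $R_{l^{\infty}}(\cE/F^{cyc})$, cofinite generation over $\Zl$ is equivalent to $Y_l(\cE/F^{cyc})$ being a finitely generated $\Zl$-module. Since a finite base change $F'/F$ alters the fine Selmer group only by a cofinitely generated $\Zl$-error (standard inflation-restriction, together with finiteness of the local contributions at the finitely many places in $S$), I would first enlarge $F$ to $F(\cE[l])$ so that $\Gal(\overline{F}/F)$ acts trivially on $\cE[l]$.

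Next I would compare $Y_l(\cE/F^{cyc})$ with a classical Iwasawa module. Multiplication by $l$ on $\cE[l^{\infty}]$ gives the short exact sequence $0 \to \cE[l] \to \cE[l^{\infty}] \to \cE[l^{\infty}] \to 0$; feeding this into the definition of the fine Selmer group shows that cofinite generation of $R_{l^{\infty}}(\cE/F^{cyc})$ over $\Zl$ is equivalent to finiteness of the mod-$l$ fine Selmer group $R_l(\cE/F^{cyc})$. Using the trivialisation of the Galois action on $\cE[l]$, Kummer theory identifies the latter, up to a finite error coming from the global-to-local map at the places in $S$, with two copies of $X_S(F^{cyc})/l$, where $X_S(F^{cyc})$ denotes the Galois group of the maximal abelian pro-$l$ extension of $F^{cyc}$ unramified outside $S$.

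The remaining step is the classical translation: $X_S(F^{cyc})$ is finitely generated as a $\Zl$-module precisely when the $\mu$-invariant of the $S$-ideal class group tower along $F^{cyc}/F$ vanishes, which is Iwasawa's $\mu=0$ conjecture for $F$. By Ferrero--Washington this holds whenever $F$ is abelian over $\Q$, giving the statement unconditionally in that case; in general the two assertions are equivalent.

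The principal obstacle is precisely this last step. Iwasawa's $\mu=0$ conjecture remains open for general number fields, and any unconditional proof in the non-abelian setting would require a genuinely new input. This is why Coates and Sujatha phrase the assertion as a conjecture rather than a theorem, and in the sequel one should expect it to appear as a hypothesis rather than as an established result.
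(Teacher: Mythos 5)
The statement you have been handed is not a theorem of the paper: it is the Coates--Sujatha conjecture, quoted verbatim from \cite{CS} and stated in a \verb|conjecture*| environment precisely because it is an open problem. The paper neither proves it nor claims to; its role here is only to motivate the analogous Conjecture~B for anticyclotomic towers. So there is no ``paper's own proof'' against which to compare your write-up.

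That said, your sketch is an accurate account of what \emph{is} known. The reduction you describe --- pass to $F(\cE[l])$, observe that cofinite $\Zl$-generation of $R_{l^{\infty}}(\cE/F^{cyc})$ amounts (via the Iwasawa structure theory together with $E(F^{cyc})[l^\infty]$ considerations) to control of the mod-$l$ fine Selmer group, and then use the trivialised Galois action to identify this, up to bounded error, with two copies of $X_S(F^{cyc})/l$ --- is essentially Theorem~3.4 of Coates--Sujatha, where they prove the conjecture is \emph{equivalent} to Iwasawa's $\mu=0$ conjecture for $F(\cE[l])$. You are also right that Ferrero--Washington gives the only broad unconditional case (abelian base), and that the general statement is genuinely open. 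In other words you have correctly identified why this is a conjecture, not a theorem, and you should not expect the argument to close. One small caveat worth flagging explicitly: the base-change step and the passage between the $l^\infty$- and $l$-fine Selmer groups both rely on $E(F^{cyc})[l^\infty]$ being finite (or on corestriction/inflation-restriction arguments giving only bounded kernels and cokernels), which is standard but should be stated; and the identification with $X_S(F^{cyc})/l$ is only up to modules pseudo-null or of $\mu$-invariant zero, not an isomorphism, so it yields exactly the equivalence of $\mu$-invariants and nothing finer.
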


If $l$ is a fixed prime and $F$ is an imaginary quadratic field we let $F^{anti}$ denote the anticyclotomic $\Zl$-extension of $F$ (we have denoted this by $K_{\infty}$ for the imaginary quadratic field $K$ above). In relation to the conjecture of Coates and Sujatha above we propose the following conjecture

\begin{conjectureB}

If $l$ is an odd prime, $F$ an imaginary quadratic field and $\mathcal{E}$ an elliptic curve defined over $F$, then $R_{l^{\infty}}(\mathcal{E}/F^{anti})$ is a cofinitely generated $\Zl$-module.
\end{conjectureB}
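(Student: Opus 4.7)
The plan is to follow the template of Coates--Sujatha's approach to the analogous cyclotomic conjecture, transported to the anticyclotomic setting, and then --- when that approach runs into a genuine open problem --- to pivot to the Heegner-point machinery that is the main subject of this paper. Cofinite generation of $R_{l^\infty}(\mathcal{E}/F^{anti})$ over $\Zl$ is equivalent to its Pontryagin dual being a finitely generated $\Zl$-module; since this dual carries a natural structure of module over $\Lambda = \Zl[[\Gal(F^{anti}/F)]]$, the assertion amounts to $\Lambda$-torsionness together with vanishing of the $\mu$-invariant.

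First I would perform the standard class-field-theoretic reduction. Using the defining sequence of $R_{l^\infty}$, together with Kummer theory and Poitou--Tate duality, one can identify the dual of the fine Selmer group, up to terms supported at the finitely many primes of $S$, with an $\mathcal{E}[l^\infty]$-twist of the Iwasawa module $X = \varprojlim_n A_l(F_n^{anti})$ formed from the $l$-parts of the ideal class groups along the anticyclotomic tower. The contributions at primes in $S$ are handled separately: local duality, combined with the fact that only primes above $l$ ramify in $F^{anti}/F$, shows that these local terms are cofinitely generated over $\Zl$. This reduces Conjecture B to the statement that $\mu(X) = 0$ for the anticyclotomic tower over $F$.

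The main obstacle is precisely this last $\mu$-vanishing. Unlike the cyclotomic setting, where Ferrero--Washington furnishes $\mu = 0$ unconditionally, no analogous theorem is known in the anticyclotomic case, and the $\mu$-invariant of $X$ can in principle be non-trivial. Rather than attempting a direct assault on this open question, I would instead pivot to the strategy suggested by the abstract and try to prove Conjecture B through its equivalence with the Heegner-point statement of Conjecture A. Concretely, in the supersingular split-at-$p$ setting where $l = p$, one would try to translate the $\overbar{\Lambda}$-corank lower bound on the Heegner module $E(K_\infty)/p$ into an upper bound on the Selmer corank via a Kolyvagin-style Euler-system argument, then compare the Selmer and fine Selmer groups through the standard local-global exact sequence and the $S$-local terms above. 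The delicate step will be the passage from a \emph{lower} bound on a Heegner submodule to an \emph{upper} bound on the fine Selmer group, which is the technical heart that an equivalence proof has to supply.
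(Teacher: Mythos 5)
The statement you have been asked to prove is labeled \emph{Conjecture} B in the paper, and the paper does not prove it. There is no ``paper's own proof'' to compare against: Conjecture B remains open, and the paper's contribution is (i) to prove that it is equivalent, under hypothesis $(*)$ and in the ordinary and supersingular split settings, to the Heegner-point Conjectures A, C(i), C(ii), and (ii) to verify it in a finite list of explicit $(E,K,p)$ by a sufficient criterion. Your proposal reads as an attempt to supply an actual proof, and you should recognize up front that no such proof is available.

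The technical issues in your sketch confirm this. Your first half --- reducing the dual fine Selmer group to a twist of the class-group Iwasawa module $X=\varprojlim A_l(F_n^{anti})$ and then needing $\mu(X)=0$ --- mirrors Coates--Sujatha's reduction in the cyclotomic case (their Conjecture A reduces to Iwasawa's $\mu=0$ conjecture for the cyclotomic $\Zp$-extension of $F(E[p])$, a non-abelian field over $\Q$, so even there Ferrero--Washington does not apply). In the anticyclotomic setting you correctly note this $\mu$-vanishing is open; that observation is where a genuine proof attempt must stop. Your ``pivot'' to the Heegner-point side does not rescue the argument: the paper's Theorem \ref{main_theorem1} establishes an equivalence between Conjecture B and Conjecture A, but Conjecture A is \emph{also} unproved. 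Passing through the equivalence replaces one open conjecture with another; it does not yield an unconditional result. Moreover, the direction of the equivalence you would need for your strategy is Conjecture~A $\Rightarrow$ Conjecture~B, and the paper's proof of that implication goes through Theorem \ref{Iwasawa_rank_theorem} (i.e.\ $X_{s,p}(E/K_\infty)=0$ assuming A), the control theorem for the fine Selmer group (Theorem \ref{control_theorem1} and Corollary \ref{control_theorem_corollary1}), and the comparison of $R^S_p(E/K_\infty)$ with $R_{p^\infty}(E/K_\infty)[p]$ --- not a Kolyvagin Euler-system bound as you suggest.

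If your goal is to reproduce what the paper actually accomplishes unconditionally toward Conjecture B, the right object to study is Theorem \ref{Euler_char_theorem} and its proof, which invokes Wuthrich's $p$-adic height pairing on the compact fine Selmer group $\mathfrak{R}_p(E/K)$ and his Euler characteristic formula. Under the hypotheses (a)--(e) one shows $\mathfrak{R}_p(E/K)=0$, hence the pairing is trivially non-degenerate, hence $R_{p^{\infty}}(E/K_{\infty})^{\dual}$ is $\Lambda$-torsion, and then the congruence \eqref{Euler_characteristic2} (together with the proposition on $\text{Tors}_{\Zp}(D)$) forces the Euler characteristic $f^*_R(0)$ to be a $p$-adic unit, which is exactly $\mu=0$ in this case. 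This is the only route in the paper that proves any case of Conjecture B, and it is quite different from the class-group or Heegner-point reductions you outline.
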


Our first result in this paper is the following theorem

\begin{theorem*}
Assume that $(E, \pi, p)$ satisfies $(*)$, $p$ splits in $K/\Q$ and $E$ has supersingular reduction at $p$, then conjecture A and conjecture B (for $\mathcal{E}=E$, $F=K$ and $l=p$) are equivalent.
\end{theorem*}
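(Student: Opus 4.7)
The plan is to show that both conjectures are, under the hypotheses, equivalent to the single statement $\corank_\Lambda \Selinf(E/K_\infty) = 2$. The implication Conjecture A $\Rightarrow \corank_\Lambda \Selinf(E/K_\infty) = 2$ is essentially the content of the results of \cite{Matar}, since Conjecture A implies Conjecture A*. The equivalence between Conjecture B and this Selmer corank equality comes from the defining sequence of the fine Selmer group: intersecting with the Selmer local conditions yields
$$0 \longrightarrow R_{p^\infty}(E/K_\infty) \longrightarrow \Selinf(E/K_\infty) \xrightarrow{\;\lambda\;} \bigoplus_{v \mid p} J_v,$$
where $J_v$ denotes the image of the local Kummer map at $v \mid p$ (primes of $S_{K_\infty}$ not above $p$ contribute terms of $\Lambda$-corank zero and may be absorbed). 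Since $p$ splits in $K/\Q$ and $E$ is supersingular at $p$, a formal-group analysis at the two primes $\fp,\overbar{\fp}$ of $K$ above $p$ gives $\corank_\Lambda \bigoplus_{v\mid p} J_v = 2$.

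For Conjecture A $\Rightarrow$ Conjecture B: assuming Conjecture A, \cite{Matar} provides $\corank_\Lambda \Selinf(E/K_\infty) = 2$ and $X_{p^\infty}(E/K_\infty) = 0$. The image of $\lambda$ therefore saturates the local bound of $2$, which forces $\corank_\Lambda R_{p^\infty}(E/K_\infty) = 0$; together with the vanishing of the Iwasawa $\mu$-invariant (which follows from the hypotheses in $(*)$, in particular the $GL_2(\Fp)$-image of Galois), this yields that $R_{p^\infty}(E/K_\infty)$ is cofinitely generated over $\Zp$, i.e., Conjecture B.

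For Conjecture B $\Rightarrow$ Conjecture A: assuming $R_{p^\infty}(E/K_\infty)$ is cofinitely generated over $\Zp$, we have $\corank_\Lambda R_{p^\infty}(E/K_\infty) = 0$, and hence, by the exact sequence and the local bound, $\corank_\Lambda \Selinf(E/K_\infty) \leq 2$. Combined with the lower bound $\corank_\Lambda \Selinf(E/K_\infty) \geq 2$ coming from the Heegner points together with the supersingular local image at $p$, this forces $\corank_\Lambda \Selinf(E/K_\infty) = 2$ and, moreover, that the image of $\lambda$ has $\Lambda$-corank exactly $2$. Using the compatibility of the Heegner family $\{\alpha_n\}$ with the local Kummer map, the image of the Heegner submodule of $E(K_\infty)/p$ inside $\bigoplus_{v\mid p} J_v/p$ must then carry $\overbar{\Lambda}$-corank $\geq 2$, and so the Heegner submodule itself has $\overbar{\Lambda}$-corank $\geq 2$, which is Conjecture A.

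The principal obstacle is this final mod-$p$ descent: one must ensure the $\Lambda$-corank $2$ of $\Selinf(E/K_\infty)$ is faithfully witnessed modulo $p$ by the Heegner submodule, not masked by $p$-torsion in the Pontryagin dual. This requires invoking the full strength of the hypotheses $(*)$ --- the $GL_2(\Fp)$ Galois image, the non-divisibility conditions on $\#E(\Fp)$ and $a_p$, and the vanishing $X_{p^\infty}(E/K_\infty) = 0$ --- to guarantee that the relevant Pontryagin duals are $p$-torsion-free in the appropriate sense, so that the $\Lambda$-corank calculations descend losslessly to $\overbar{\Lambda}$-corank calculations.
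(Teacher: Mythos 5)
There are genuine gaps in both directions, and the route the proposal takes is not the paper's.

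In the direction A $\Rightarrow$ B, you pass from $\Lambda$-cotorsionness of $R_{p^\infty}(E/K_\infty)^{\dual}$ to its $\Zp$-finite generation by invoking vanishing of the $\mu$-invariant, claiming this ``follows from the hypotheses in $(*)$, in particular the $GL_2(\Fp)$-image of Galois.'' That step is incorrect: if $\mu=0$ for the fine Selmer group were already known under $(*)$, Conjecture B would be an unconditional theorem (weak Leopoldt for $E$ supersingular at $p$ split, i.e.\ the $\Lambda$-cotorsionness, is already known from Ciperiani and Iovita--Pollack, as the paper notes in section 4), and the entire equivalence with Conjecture A would collapse. The $\mu=0$ statement for the anticyclotomic fine Selmer group is precisely the content of Conjecture B; it is the anticyclotomic analogue of the Coates--Sujatha conjecture and is not a consequence of large Galois image. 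The paper argues differently: Conjecture A gives $X_{s,p}(E/K_\infty)=0$, hence its submodule $X^S_{f,p}(E/K_\infty)=\varprojlim R^S_p(E/K_n)$ vanishes, the control theorem for the fine $p$-Selmer group then makes $R^S_p(E/K_\infty)$ finite, and a comparison of $R^S_p(E/K_\infty)$ with $R_{p^\infty}(E/K_\infty)[p]$ concludes. No independent $\mu$-bound is used.

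In the direction B $\Rightarrow$ A, you correctly flag the principal obstacle --- descending from $\Lambda$-corank computations for $\Selinf$ to an $\overbar{\Lambda}$-corank lower bound on the Heegner submodule of $E(K_\infty)/p$ --- but the proposal supplies no mechanism to close it beyond a general appeal to $(*)$. The paper's argument has a concrete structure you are missing, built on Kobayashi's plus/minus local conditions. The two Heegner chains $\dlim R_{2n}\alpha_{2n}$ and $\dlim R_{2n+1}\alpha_{2n+1}$ are each non-$\overbar{\Lambda}$-cotorsion (by Cornut--Vatsal, Theorem 4.1 of \cite{Matar}), and the trace relation places them in $\Sel^+(E/K_\infty)$ and $\Sel^-(E/K_\infty)$ respectively; the Iovita--Pollack/Ciperiani--Wiles identity $E^+(K_{\fp_{\infty,i}})\otimes\Fp \cap E^-(K_{\fp_{\infty,i}})\otimes\Fp = E(\Qp)\otimes\Fp$ then shows $\Sel^+(E/K_\infty)\cap\Sel^-(E/K_\infty) = \Selm^1_p(E/K_\infty)$, which differs from $R^S_p(E/K_\infty)$ by a finite group $\mathcal{L}(K_\infty)$. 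Conjecture B forces $R^S_p(E/K_\infty)$, and hence the intersection of the two chains, to be finite, so their sum has $\overbar{\Lambda}$-corank $\geq 2$, which is Conjecture A. This plus/minus bookkeeping is exactly the missing content of your final ``mod-$p$ descent.''
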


If $\fp_{\infty}$ is a prime of $K_{\infty}$ above $p$, we let $K_{\fp_{\infty}}$ denote the union of the completions of the fields $K_1 \subset K_2 \subset K_3 \subset \cdots$ at $\fp_{\infty}$. We now propose a third conjecture

\begin{conjectureC}
Suppose that $(E, \pi, p)$ satisfies $(*)$
\begin{enumerate}[(i)]
\item If $E$ has ordinary reduction at $p$, then there exists a prime $\fp_{\infty}$ of $K_{\infty}$ above $p$ such that the $\Gamma$-submodule of $E(K_{\fp_{\infty}})/p$ generated by the Heegner points $\alpha_n$ has infinite cardinality.
\item If $E$ has supersingular reduction at $p$ and $p$ splits in $K/\Q$, then there exists a prime $\fp_{\infty}$ of $K_{\infty}$ above $p$ such that the $\Gamma$-submodule of $E(K_{\fp_{\infty}})/p$ generated by the Heegner points $\alpha_{2n}$ and the $\Gamma$-submodule of $E(K_{\fp_{\infty}})/p$ generated by the Heegner points $\alpha_{2n+1}$ are both of infinite cardinalities.
\end{enumerate}
\end{conjectureC}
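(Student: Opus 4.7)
The conjecture has two parts (ordinary and supersingular), and my plan is to establish each by relating the local Heegner submodule at a prime above $p$ to a global Heegner submodule in $E(K_\infty)/p$---in the supersingular case, precisely to the one appearing in Conjecture~A.

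I first treat the supersingular case (ii), which I would show is equivalent to Conjecture~A. The key tool is the natural localization
$$\mathrm{loc}_{\fp}: E(K_\infty)/p \longrightarrow E(K_{\fp_\infty})/p,$$
together with the Heegner point norm relations. Because $p \geq 5$ and $E$ is supersingular at $p$ force $a_p = 0$, the trace $\mathrm{Tr}_{K_{n+1}/K_n}(\alpha_{n+1})$ depends (up to sign) only on $\alpha_{n-1}$, so the Heegner system splits into two norm-compatible subsystems indexed by the parity of $n$. On the local side, a Kobayashi-type plus/minus decomposition of $E(K_{\fp_\infty})$ separates these parities. The $\overbar{\Lambda}$-corank $\geq 2$ of the Heegner submodule predicted by Conjecture~A should then decompose as $1+1$ across the parities, forcing each of the two local $\Gamma$-submodules in (ii) to be infinite; conversely, infinitude of both halves globally rebuilds the rank-two statement of Conjecture~A.

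For the ordinary case (i), since $a_p$ is a $p$-adic unit, I would twist by the unit root $\alpha_p$ of $X^2 - a_p X + p$ to obtain a genuinely norm-compatible family $\tilde\alpha_n = \alpha_p^{-n}\alpha_n$ in the local completion. The $\Gamma$-submodule generated at $\fp_\infty$ is infinite if and only if the associated Perrin-Riou Iwasawa class is non-zero; by the $p$-adic Gross-Zagier formula this reduces to the non-vanishing of the anticyclotomic $p$-adic $L$-function, which is a theorem of Cornut-Vatsal under the hypothesis $(*)$.

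The main obstacle in both cases is the injectivity of $\mathrm{loc}_{\fp}$ on the Heegner $\overbar{\Lambda}$-submodule of $E(K_\infty)/p$. My plan is to address this via Poitou-Tate duality: an element in the kernel (combined with control at the other primes of the bad set $S$) produces a class in the fine Selmer group over $K^{\mathrm{anti}}$. Using the equivalence Conjecture~A $\Leftrightarrow$ Conjecture~B already established in the paper, together with the cofinite $\Zp$-generation predicted by Conjecture~B, the supply of such fine classes is limited, and the hypothesis $(*)$---in particular $\Gal(\Q(E[p])/\Q) = GL_2(\Fp)$ and $p \nmid h_K$---should rule out enough of them to yield the required injectivity.
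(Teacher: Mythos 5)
Conjecture C is not proved in the paper; it is an open conjecture, and the paper's contribution is the chain of equivalences Conjecture~C $\Leftrightarrow$ Conjecture~B ($\Leftrightarrow$ Conjecture~A in the supersingular case), proved in Theorems~\ref{main_theorem1} and~\ref{main_theorem2}. Immediately after stating Conjecture~C the author notes explicitly that the \emph{global} version, with $K_{\fp_\infty}$ replaced by $K_\infty$, is a theorem of Cornut and Cornut--Vatsal, and that Conjecture~C is the open ``local analog.'' Your proposal therefore cannot succeed as written, because its logic is circular: in the supersingular case you reduce to Conjecture~A, and in the injectivity discussion you invoke ``the cofinite $\Zp$-generation predicted by Conjecture~B'' --- but Conjectures~A and~B are precisely the other unproved conjectures that the paper shows to be \emph{equivalent} to Conjecture~C. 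Deducing C from A or B gives one direction of an equivalence, not a proof of C.

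The substantive mathematical gap is the one you flag yourself and then do not close: the injectivity (or at least non-degeneracy) of the localization map $\mathrm{loc}_{\fp}\colon E(K_\infty)/p \to E(K_{\fp_\infty})/p$ restricted to the Heegner $\overbar{\Lambda}$-submodule. The global nontriviality results of Cornut and Cornut--Vatsal say nothing about whether the Heegner classes survive localization at a single prime above $p$, and this is exactly the content of Conjecture~C. Your proposed remedy --- ``Poitou--Tate duality plus control of fine Selmer classes should rule out enough of them'' --- is not an argument but a restatement of the desired conclusion in the language of fine Selmer groups, which is precisely what the paper's equivalence with Conjecture~B formalizes. In the ordinary case, the invocation of the $p$-adic Gross--Zagier formula and Cornut--Vatsal non-vanishing likewise establishes a \emph{global} non-vanishing statement; no known theorem converts that into the required \emph{local} infinitude at $\fp_\infty$. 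So both branches of the proposal reduce the open local problem to another open statement or to the already-known global statement, without bridging the gap between them.
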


If one replaces $K_{\fp_{\infty}}$ in the above conjecture with the global field $K_{\infty}$ then the conjecture becomes true. This can be shown using the results of Cornut \cite{Cornut} and Cornut and Vatsal \cite{CV_DOC} (see theorems 3.1 and 4.1 of \cite{Matar}). Therefore, the conjecture is a local analog of this global result.

The relationship between conjectures B and C is given in the following theorem

\begin{theorem*}
Assume that $(E, \pi, p)$ satisfies $(*)$ then we have
\begin{enumerate}[(a)]
\item If $E$ has ordinary reduction at $p$, then conjecture C(i) and conjecture B (for $\mathcal{E}=E$, $F=K$ and $l=p$) are equivalent.
\item If $p$ splits in $K/\Q$ and $E$ has supersingular reduction at $p$, then conjecture C(ii) and conjecture B (for $\mathcal{E}=E$, $F=K$ and $l=p$) are equivalent (and hence also equivalent to conjecture A by the previous theorem).
\end{enumerate}
\end{theorem*}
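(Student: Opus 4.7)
My approach is to realise the fine Selmer group as the kernel of the localisation map from the full $p^{\infty}$-Selmer group into its images in local Kummer cohomology. Combining the defining sequences of $R_{p^{\infty}}(E/K_{\infty})$ and $\Selinf(E/K_{\infty})$ yields an exact sequence
$$0 \longrightarrow R_{p^{\infty}}(E/K_{\infty}) \longrightarrow \Selinf(E/K_{\infty}) \longrightarrow \prod_{v \in S_{K_{\infty}}} E(K_{\infty,v}) \otimes \Qp/\Zp,$$
in which the factors at primes $v \nmid p$ contribute only bounded $\Zp$-corank. Hence Conjecture B is equivalent, up to a finite discrepancy, to saying that the image of the localisation map at the primes above $p$ has maximal possible $\Lambda$-corank \emph{and} vanishing $\mu$-invariant. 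Via the Kummer map, the local image of the global Heegner cohomology class attached to $\alpha_n$ at a chosen prime $\fp_{\infty}$ above $p$ is precisely the Kummer image of the local point $\alpha_n \in E(K_{\fp_{\infty}})$, so Conjecture C is exactly the mod-$p$ version of the non-triviality of this image.

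For part (a), in the ordinary case I would invoke the structure theorems of Bertolini and Cornut--Vatsal: under $(*)$, $\Selinf(E/K_{\infty})$ is of $\Lambda$-corank one, and the Heegner classes generate a $\Lambda$-submodule of $\Selinf(E/K_{\infty})$ of full corank. Therefore the image of the localisation at $\fp_{\infty}$ accounts, up to a finite piece, for the whole corank-one structure of $\Selinf(E/K_{\infty})$ precisely when the mod-$p$ $\Gamma$-span of the local Heegner points is infinite, which is Conjecture C(i); and this in turn is equivalent to $R_{p^{\infty}}(E/K_{\infty})$ being both $\Lambda$-cotorsion and of trivial $\mu$-invariant, i.e.\ to Conjecture B.

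For part (b), the first theorem of the paper already gives Conjecture A $\Leftrightarrow$ Conjecture B. Under Conjecture A, the results of \cite{Matar} show that $\Selinf(E/K_{\infty})$ has $\Lambda$-corank two with a natural parity decomposition coming from the two primes of $K_{\infty}$ above the split prime $p$; the even and odd families $\{\alpha_{2n}\}$ and $\{\alpha_{2n+1}\}$ generate the two corank-one summands. Localising at a fixed $\fp_{\infty}$ exactly as in part (a) then shows that Conjecture B holds iff \emph{both} local parity spans of local Heegner points are infinite mod $p$, which is Conjecture C(ii).

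The main obstacle is the careful bookkeeping of the $\mu$-invariant: Conjecture B asks for $R_{p^{\infty}}$ to be cofinitely generated over $\Zp$, which is strictly stronger than being $\Lambda$-cotorsion, while Conjecture C is phrased purely mod $p$. I would bridge this gap by working throughout with the mod-$p$ fine Selmer group $R_p(E/K_{\infty}) = \ker\bigl(H^1(G_S(K_{\infty}), E[p]) \to \prod_v H^1(K_{\infty,v}, E[p])\bigr)$, and using a Nakayama/control-type argument together with the absolute irreducibility of $E[p]$ (condition (iii) of $(*)$) and the divisibility assumption $p \nmid N d_K h_K \varphi(N d_K)$ to compare $R_p(E/K_{\infty})$ with $R_{p^{\infty}}(E/K_{\infty})[p]$. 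Verifying that the mod-$p$ Kummer image of the global Heegner class coincides with the reduction of the local point $\alpha_n$ inside $H^1(K_{\fp_{\infty}}, E[p])$ is where the hypotheses (iv) in the ordinary case, and the supersingular analogues used in \cite{Matar}, are needed to rule out spurious contributions from the formal group.
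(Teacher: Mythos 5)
Your overall framework — realising the fine Selmer group as the kernel of localisation at $p$ and tying Conjecture C to the local images of Heegner points — is the right picture and matches the paper in spirit. Your remark that one should work mod $p$ rather than at the $\Lambda$-module level, comparing $R^S_p(E/K_\infty)$ with $R_{p^\infty}(E/K_\infty)[p]$, is also exactly what the paper does. However, there are two concrete gaps in part (b), both of which are where the real work happens.

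First, in the forward direction of (b) you assert that ``the even and odd families $\{\alpha_{2n}\}$, $\{\alpha_{2n+1}\}$ generate the two corank-one summands'' as if this were a formal consequence of the corank-two statement for $\Selinf(E/K_\infty)$. It is not: each of $\dlim R_{2n}\alpha_{2n}$ and $\dlim R_{2n+1}\alpha_{2n+1}$ has $\overbar{\Lambda}$-corank $\geq 1$, but to conclude the sum has corank $\geq 2$ you must show the intersection is finite. The paper does this using Kobayashi's plus/minus subgroups and the identity $E^+(K_{\fp_\infty})\otimes\Fp\,\cap\,E^-(K_{\fp_\infty})\otimes\Fp = E(\Qp)\otimes\Fp$ (via the Iovita--Pollack result as in Ciperiani--Wiles), together with the trace relation $\text{Tr}_{K_{n+1}/K_n}\alpha_{n+1}=-\alpha_{n-1}$ which places the even resp.\ odd Heegner classes in $E^\pm$. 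Without this local plus/minus decomposition, the hypothesis that both parity spans are locally infinite does not by itself force the global sum to have corank $\geq 2$, so Conjecture A (and hence B) does not follow.

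Second, in the backward direction of (b) there is an unaddressed obstruction. If C(ii) fails, one has (say) that the even local span at a chosen $\fp_\infty$ is annihilated by some $g^{p^k}-1$. To conclude that $(g^{p^m}-1)\dlim R_{2n}\alpha_{2n}$ lies in $R^S_p(E/K_\infty)$ one needs the restriction to vanish at \emph{both} primes $\fp_\infty$ and $\bar\fp_\infty$ above the split prime $p$, but the hypothesis only controls $\fp_\infty$. The paper bridges this with a complex-conjugation argument: one shows $(g^{p^k}-1)\dlim R_{2n}\alpha_{2n}$ is $\tau$-stable using $\tau g\tau=g^{-1}$ and the divisibility $(g-1)\mid(g^{p^t-1}-1)$ on $\Gamma_t$-invariants, whence vanishing at $\fp_\infty$ propagates to $\bar\fp_\infty$. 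Your proposal does not mention this step, and without it the backward implication of (b) does not go through. Part (a) is closer to complete, though you should make explicit the rank bookkeeping (that $\corank_{\overbar\Lambda}\Sel_p(E/K_\infty)\le 1$ from the mod-$p$ control theorem forces $\dlim R_n\alpha_n\cap R^S_p$ to control everything) rather than appealing to a corank-one ``accounting'' heuristic.
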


The final result of this paper concerns the $\mu$-invariant of the Pontryagin dual of $\Selinf(E/K_{\infty})$ which we denote by $\Selinf(E/K_{\infty})^{\dual}$ (see section 2 for a definition of $\Selinf(E/K_{\infty})$). Using the method of proof of \cite{Matar} we will show

\begin{theorem*}
Suppose that $(E, \pi, p)$ satisfies $(*)$ then we have
\begin{enumerate}[(a)]
\item If $E$ has ordinary reduction at $p$, then $\Selinf(E/K_{\infty})^{\dual}$ has $\Lambda$-rank equal to 1 and $\mu$-invariant equal to zero.
\item If $p$ splits in $K/\Q$, $E$ has supersingular reduction at $p$ and conjecture C(ii) is true, then $\Selinf(E/K_{\infty})^{\dual}$ has $\Lambda$-rank equal to 2 and $\mu$-invariant equal to zero.
\end{enumerate}
\end{theorem*}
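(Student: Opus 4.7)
The plan is to follow the strategy used to prove Theorem~B in \cite{Matar}, refined so as to track the $\mu$-invariant and carried out simultaneously in the ordinary and supersingular cases. The two main ingredients are a control on the fine $p^{\infty}$-Selmer group supplied by the preceding theorem of this paper, and a Poitou--Tate comparison between $\Selinf(E/K_{\infty})$ and $R_{p^{\infty}}(E/K_{\infty})$ coupled with a local computation at the primes of $K_{\infty}$ above $p$.

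First I would reduce the analysis to the fine Selmer side. In case~(b) the assumption Conjecture~C(ii) becomes, via the preceding theorem, Conjecture~B, so that $R_{p^{\infty}}(E/K_{\infty})^{\dual}$ has $\Lambda$-rank $0$ and $\mu$-invariant $0$. In case~(a) the analogous input is unconditional: Conjecture~C(i) can be obtained directly from the fact that, at a prime above $p$ of good ordinary reduction, the Heegner points $\alpha_n$ lie in the formal group and are non-torsion at infinitely many layers; the preceding theorem then upgrades this to Conjecture~B. Thus in both cases the Pontryagin dual of the fine Selmer group is a finitely generated $\Zp$-module.

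Next I would place $\Selinf(E/K_{\infty})^{\dual}$ into an exact sequence of finitely generated $\Lambda$-modules, one of whose outer terms is $R_{p^{\infty}}(E/K_{\infty})^{\dual}$ and whose remaining terms are built from local cohomology of $E[p^{\infty}]$ at the primes of $K_{\infty}$ above $p$. Using Tate local duality together with the structure of $E(K_{\fp_{\infty}}) \otimes \Qp/\Zp$, one computes the total $\Lambda$-rank and $\mu$-invariant of these local terms: the rank contribution is $1$ in case~(a), and, since $p$ splits and both primes above $p$ contribute independently in the supersingular setting, it is $2$ in case~(b); in both cases the local $\mu$-invariant vanishes. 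Additivity of $\Lambda$-rank and of $\mu$-invariant in the exact sequence then yields the stated values for $\Selinf(E/K_{\infty})^{\dual}$.

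The main obstacle is the local $\mu = 0$ statement at the primes above $p$ in the supersingular case. There, $E(K_{\fp_{\infty}})$ admits no Greenberg-type sub-filtration and the local term is not a simple Kummer image, so a priori it could carry a non-trivial $\mu$-invariant. This is exactly where Conjecture~C(ii) becomes decisive: the infinitude of the $\Gamma$-modules generated by the parity sub-systems $\{\alpha_{2n}\}$ and $\{\alpha_{2n+1}\}$ in $E(K_{\fp_{\infty}})/p$ precludes a non-zero $\mu$-contribution from the local term at each prime above $p$, mirroring the role of Conjecture~A$^{*}$ in \cite{Matar} and completing the argument.
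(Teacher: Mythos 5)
Your approach diverges from the paper's, and it contains a genuine error in part (a). You claim that Conjecture~C(i) ``can be obtained directly from the fact that, at a prime above $p$ of good ordinary reduction, the Heegner points $\alpha_n$ lie in the formal group and are non-torsion at infinitely many layers.'' This conflates the \emph{global} non-triviality of the Heegner points in $E(K_\infty)/p$ (which does follow from Cornut's theorem) with the \emph{local} non-triviality of their images in $E(K_{\fp_\infty})/p$. The paper explicitly flags this distinction in the introduction: replacing $K_{\fp_\infty}$ by $K_\infty$ in Conjecture~C makes it a theorem, but the local statement remains open. Conjecture~C(i) is not a theorem, and neither is Conjecture~B in the ordinary case. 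Crucially, part~(a) of the theorem you are proving is \emph{unconditional} in the paper; your route makes it rest on an unproven (and, in your sketch, incorrectly ``proved'') conjecture, which is both logically wrong and strictly weaker than what the paper establishes.

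The paper's actual argument is more direct and bypasses the fine Selmer group for the $\mu$-invariant entirely. The $\Lambda$-rank of $\Selinf(E/K_\infty)^\dual$ is imported wholesale from Theorems~A and~B of \cite{Matar} (the supersingular case needing Conjecture~A, which is supplied by C(ii) via Theorems~\ref{main_theorem1} and~\ref{main_theorem2}). For the $\mu$-invariant, one uses $E(K_\infty)[p^\infty]=\{0\}$ to identify $\Sel(E/K_\infty)$ with $\Selinf(E/K_\infty)[p]$, reducing $\mu=0$ to the inequality $\corank_{\overbar\Lambda}\Sel(E/K_\infty)\le 1$ (ordinary) or $\le 2$ (supersingular). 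These bounds follow from the $\overbar\Lambda$-control theorem (Theorem~\ref{control_theorem2} and Corollary~\ref{control_theorem_corollary2}) together with the bound $\rank_{\overbar\Lambda}X_{s,p}(E/K_\infty)\le 1$ (resp.\ $=0$) from Theorem~\ref{Iwasawa_rank_theorem}. Your Poitou--Tate exact sequence comparing $\Selinf^\dual$ with $R_{p^\infty}^\dual$ plus local terms is a plausible alternative skeleton, but you leave the critical step --- showing the local terms at $p$ have vanishing $\mu$-invariant, especially at supersingular primes where there is no ordinary filtration --- entirely to an appeal to C(ii) ``precluding a non-zero $\mu$-contribution,'' without a mechanism. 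As written, the proposal does not amount to a proof in either case.
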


Using the results of Wuthrich \cite{Wuthrich1}, we end this paper in section 4 by giving examples veryfiying conjecture B.

\section{Definitions and Control Theorems}
\subsection{Definitions}
In this section we recall the definition of the Heegner points as well as the definition of the Selmer and fine Selmer group.

Let $E$ be an elliptic curve defined over $\Q$. We fix a modular parametrization $\pi: X_0(N) \to E$ which maps the cusp $\infty$ of $X_0(N)$ to the origin of $E$ (see \cite{Wiles} and \cite{BDCT}). Assume that every prime dividing $N$ splits in $K/\Q$ (condition $(*)$-i). It follows that we can choose an ideal $\cN$ such that $\cO_K/\cN \cong \Z/N\Z$. Let $m$ be an integer that is relatively prime to $Nd_K$ and let $\cO_m = \Z + m\cO_K$ be the order of conductor $m$ in $K$. The ideal $\cN_m=\cN \cap \cO_m$ satisfies $\cO_m/\cN_m \cong \Z/N\Z$ and therefore the natural projection of complex tori:

$$\C/\cO_m \to \C/\cN_m^{-1}$$\\
is a cyclic $N$-isogeny, which corresponds to a point of $X_0(N)$. Let $\alpha[m]$ be its image under the modular parametrization $\pi$. From the theory of complex multiplication we have that $\alpha[m] \in E(K[m])$ where $K[m]$ is the ring class field of $K$ of conductor $m$.

If we assume that the class number of $K$ is not divisible by $p$ (condition $(*)$-ii), it follows for any $n$ that $K[p^{n+1}]$ is the ring class field of minimal conductor that contains $K_n$. We now define $\alpha_n \in E(K_n)$ to be the trace from $K[p^{n+1}]$ to $K_n$ of $\alpha[p^{n+1}]$.

Let $R_n\alpha_n$ denote the $R_n$-submodule of $H^1(K_n, E[p])$ generated by the image of $\alpha_n$ under the Kummer map

$$E(K_n) \to H^1(K_n, E[p]).$$\\

If we assume that $\Gal(\Q(E[p])/\Q)=GL_2(\Fp)$ (condition $(*)$-iii), then by corollary 2.4 of \cite{Matar} we have $E(K_{\infty})[p^{\infty}]=\{0\}$. This implies that the restriction map for $m\geq n$

$$H^1(K_n, E[p]) \to H^1(K_m, E[p])$$\\
\noindent is injective and therefore allows us to view $R_n\alpha_n$ as a submodule of $H^1(K_m, E[p])$.

Assume that $E$ has good ordinary reduction at $p$ and condition $(*)$-iv is met. Then we have (see \cite{Bertolini1} prop 2.1.4) $\text{Tr}_{K_{n+1}/K_n}(\alpha_{n+1})=u\alpha_n$ for some unit $u \in R_n$. This implies that $R_n\alpha_n \subset R_{n+1}\alpha_{n+1}$ and so we may construct the direct limit $\dlim R_n \alpha_n$.

Now assume that $E$ has good supersingular reduction at $p \geq 5$. In this case one can show that $\text{Tr}_{K_{n+1}/K_n}(\alpha_{n+1})=-\alpha_{n-1}$. This then implies that $R_n\alpha_n \subset R_{n+2}\alpha_{n+2}$ and so we may construct the direct limits $\dlim R_{2n} \alpha_{2n}$ and $\dlim R_{2n+1} \alpha_{2n+1}$.

Let us now define the Selmer groups we will be working with: If $L/\Q$ is any algebraic extension and $E$ is an elliptic curve defined over $L$, we let $\Selinf(E/L)$ denote the $p^{\infty}$-Selmer group of $E$ over $L$ defined by
$$\displaystyle 0 \longrightarrow \Selinf(E/L) \longrightarrow H^1(L, E[p^{\infty}])\longrightarrow \prod_v H^1(L_v, E)[p^{\infty}].$$

We will also be working with the $p$-Selmer group $\Sel(E/L)$ defined by

$$\displaystyle 0 \longrightarrow \Sel(E/L) \longrightarrow H^1(L, E[p])\longrightarrow \prod_v H^1(L_v, E)[p].$$

We now repeat the definition of the fine $p^{\infty}$-Selmer group from the introduction. Assume that $p$ is an odd prime, $F$ a number field and $E$ is a an elliptic curve defined over $F$. Let $S$ be a finite set of primes of $F$ containing all the primes dividing $p$ and all the primes where $E$ has bad reduction. We let $F_S$ be the maximal extension of $F$ unramified outside $S$. Suppose now that $L$ is a field with $F \subseteq L \subseteq F_S$. We let $G_S(L)=\Gal(F_S/L)$ and $S_L$ be the set of primes of $L$ above those in $S$. We define the fine $p^{\infty}$-Selmer group of $E/L$ as

$$\displaystyle 0 \longrightarrow R_{p^{\infty}}(E/L) \longrightarrow H^1(G_S(L), E[p^{\infty}]) \longrightarrow \prod_{v \in S_L} H^1(L_v, E[p^{\infty}]).$$

The definition of $R_{p^{\infty}}(E/L)$ does not depend on the set $S$. In fact, one can show that for any set $S$ as above we have

$$\displaystyle 0 \longrightarrow R_{p^{\infty}}(E/L) \longrightarrow \Selinf(E/L) \longrightarrow \prod_{v | p} H^1(L_v, E[p^{\infty}]).$$

We also define the fine $p$-Selmer group of $E/L$ whose definition may depend on the set $S$. It is defined as

$$\displaystyle 0 \longrightarrow R^S_p(E/L) \longrightarrow H^1(G_S(L), E[p]) \longrightarrow \prod_{v \in S_L} H^1(L_v, E[p]).$$

\subsection{Control Theorems}
In this section we prove two Iwasawa-theoretic control theorems: one for the $p$-Selmer group and another for the fine $p$-Selmer group

First we need the following proposition

\begin{proposition}\label{Iwasawa_rank_proposition}
Let $M$ be a finitely generated $\overbar{\Lambda}$-module. Consider the $\overbar{\Lambda}$-module $M^+=\Hom_{\overbar{\Lambda}}(M, \overbar{\Lambda})$. Then $M^+$ is a free $\overbar{\Lambda}$-module with the same rank as $M$ and we have an isomorphism $M^+\cong \ilim_n (M^{\; \dual})^{\Gamma_n}$ where $M^{\; \dual}=\Hom(M, \Fp)$ is the Pontryagin dual of $M$ and the inverse limit is with respect to the norm maps.
\end{proposition}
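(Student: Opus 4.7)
The plan is to separate the two claims: the freeness and rank statement reduces to the structure theorem over the DVR $\overbar{\Lambda} = \Fp[[T]]$, while the identification with $\ilim_n (M^{\dual})^{\Gamma_n}$ goes through Matlis duality for the Artinian quotients of $\overbar{\Lambda}$. For the first claim, I would apply the structure theorem for finitely generated modules over a DVR to write $M \cong \overbar{\Lambda}^r \oplus \bigoplus_i \overbar{\Lambda}/T^{k_i}$ with $r = \rank_{\overbar{\Lambda}}(M)$. Since $\overbar{\Lambda}$ is torsion-free, each torsion summand satisfies $\Hom_{\overbar{\Lambda}}(\overbar{\Lambda}/T^{k_i}, \overbar{\Lambda}) = 0$, while the free part contributes $\Hom_{\overbar{\Lambda}}(\overbar{\Lambda}^r, \overbar{\Lambda}) = \overbar{\Lambda}^r$; hence $M^+$ is free of rank $r$.

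For the isomorphism with the inverse limit, I would use the fact that in characteristic $p$ we have $\gamma^{p^n}-1 = (1+T)^{p^n}-1 = T^{p^n}$ in $\overbar{\Lambda}$, so writing $A_n := \overbar{\Lambda}/(\gamma^{p^n}-1)\overbar{\Lambda} = \Fp[T]/T^{p^n}$ we obtain $\overbar{\Lambda} = \ilim_n A_n$. Since $\Hom$ preserves inverse limits in the target and every map into an $A_n$-module factors through $M/(\gamma^{p^n}-1)M$, this yields
\[ M^+ \;=\; \ilim_n \Hom_{A_n}\bigl(M/(\gamma^{p^n}-1)M,\; A_n\bigr). \]
Each $A_n$ is a $0$-dimensional local Gorenstein ring, so Matlis duality supplies an isomorphism $A_n \cong A_n^{\dual}$ of $A_n$-modules; explicitly, choosing the socle functional $\ell_n \in A_n^{\dual}$ that picks off the coefficient of $T^{p^n-1}$ gives the isomorphism $a \mapsto \ell_n(a\,\cdot\,-)$. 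Post-composing with $\ell_n$ then yields
\[ \Hom_{A_n}\bigl(M/(\gamma^{p^n}-1)M,\; A_n\bigr) \;\cong\; \bigl(M/(\gamma^{p^n}-1)M\bigr)^{\dual} \;\cong\; (M^{\dual})^{\Gamma_n}, \]
the last isomorphism being the standard duality between $\Gamma_n$-coinvariants on $M$ and $\Gamma_n$-invariants on $M^{\dual}$.

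The delicate step, and the one I expect to be the main obstacle, is verifying that under these identifications the transition maps match up: on the Hom side they come from the natural projections $A_{n+1}\twoheadrightarrow A_n$, whereas on the right-hand side we want the norm maps. Tracing through the explicit Matlis isomorphisms, a functional $\psi = \ell_{n+1}\circ\phi \in (M^{\dual})^{\Gamma_{n+1}}$ is sent to $m\mapsto \ell_n(\bar\phi(m))$, where $\bar\phi$ is obtained from $\phi$ by reducing mod $T^{p^n}$; a short calculation in $\Fp[T]$-coefficients shows this equals $m\mapsto \psi(T^{p^n(p-1)}m)$. Since $T^{p^n(p-1)} = (\gamma^{p^{n+1}}-1)/(\gamma^{p^n}-1) = \sum_{i=0}^{p-1}\gamma^{ip^n}$ is exactly the norm element for $\Gamma_n/\Gamma_{n+1}$, this identifies the transition with the norm map and completes the proof.
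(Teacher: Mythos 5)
Your proof is correct and follows essentially the same route as the paper: both express $\overbar{\Lambda}$ as the inverse limit of the finite self-dual quotients $\Fp[G_n] \cong \Fp[T]/T^{p^n}$ and apply the level-by-level duality $\Hom_{\Fp[G_n]}(M_{\Gamma_n}, \Fp[G_n]) \cong (M_{\Gamma_n})^{\dual} \cong (M^{\dual})^{\Gamma_n}$. The only cosmetic difference is that you phrase the self-duality of $\Fp[G_n]$ via Matlis--Gorenstein duality with the socle functional picking off the coefficient of $T^{p^n-1}$, whereas the paper writes out the explicit group-algebra isomorphism $\varphi \mapsto (x \mapsto \sum_{g \in G_n} \varphi(g^{-1}x)g)$ (equivalently, the functional picking off the coefficient of the identity); both choices are nonzero on the one-dimensional socle and, as your transition-map computation confirms, both produce the norm maps.
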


\begin{proof}

The proof is basically the same as \cite{PR} 2.2 lemma 4. The fact that $M^+$ is a free $\overbar{\Lambda}$-module with the same rank as $M$ is clear. As for the second statement we have the following isomorphisms

\begin{align*}
M^+&=\Hom_{\overbar{\Lambda}}(M, \overbar{\Lambda})\\
&\cong \ilim_n \Hom_{\overbar{\Lambda}}(M, \Fp[G_n])\\
&\cong \ilim_n \Hom_{\Fp[G_n]}(M_{\Gamma_n}, \Fp[G_n])\\
&\cong \ilim_n \Hom_{\Fp}(M_{\Gamma_n}, \Fp)= \ilim_n (M^{\; \dual})^{\Gamma_n}.
\end{align*}

The last isomorphism above is induced by the isomorphism\\ $\Hom_{\Fp}(M_{\Gamma_n}, \Fp) \cong \Hom_{\Fp[G_n]}(M_{\Gamma_n}, \Fp[G_n])$: $\varphi \mapsto (x \mapsto \sum_{g \in G_n} \varphi(g^{-1}x)g)$.

\end{proof}

We also need the following lemma

\begin{lemma}\label{inverse_limit_lemma}
If $X=\ilim M_i$ is the inverse limit of finite groups of bounded order, then $X$ is finite.
\end{lemma}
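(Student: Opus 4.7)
The plan is a standard stabilization (Mittag--Leffler-type) argument. For each index $i$ in the directed system, I would consider the descending chain of subgroups $\phi_{ji}(M_j) \subseteq M_i$ as $j$ runs over indices with $j \geq i$, where $\phi_{ji}\colon M_j \to M_i$ denotes the transition map. Since $M_i$ is finite, this chain stabilizes; denote the eventual image by $N_i \subseteq M_i$. The original transition maps restrict to \emph{surjective} maps $N_j \twoheadrightarrow N_i$ for $j \geq i$, and a direct check shows that any compatible family $(m_i) \in \ilim M_i$ automatically satisfies $m_i \in N_i$ for every $i$; hence $\ilim M_i = \ilim N_i$, reducing the problem to the case of surjective transition maps.

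Having made this reduction, I would invoke the uniform bound $|M_i| \leq N$. Surjectivity forces $|N_j| \geq |N_i|$ whenever $j \geq i$, so the function $i \mapsto |N_i|$ is bounded above by $N$ and non-decreasing along the directed system, hence eventually constant at some value $m \leq N$. Beyond that point, every surjection between two groups of equal finite order $m$ is an isomorphism, so the tail of $\{N_i\}$ is a system of isomorphisms between finite groups of order $m$. The inverse limit is therefore isomorphic to any single $N_{i_0}$ in the stable range, which is finite.

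The only subtlety --- and thus the nearest thing to an obstacle --- is ensuring that the intersection $\bigcap_{j \geq i} \phi_{ji}(M_j)$ is realized by one individual transition map rather than only by the intersection over an infinite cofinal subsystem. This follows from finiteness of $M_i$ together with directedness of the index set: a strictly decreasing chain of subgroups of $M_i$ has length at most $\log_2 |M_i|$, so cofinality of the indexing forces the chain to stabilize at a specific stage. Beyond this mild bookkeeping point, the argument is essentially formal.
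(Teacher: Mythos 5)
Your proof is correct, but it takes a genuinely different route from the paper's. The paper's argument is topological: it endows each $M_i$ with the discrete topology so that $X$ becomes a profinite group, then cites a result from Wilson's \emph{Profinite Groups} to conclude that every open subgroup of $X$ has index bounded by the uniform bound on $\#M_i$, and finally observes that an infinite profinite group necessarily has open subgroups of arbitrarily large index (by nesting proper open subgroups). Your argument is instead the elementary Mittag--Leffler stabilization: replace each $M_i$ by its stable image $N_i = \bigcap_{j\geq i}\phi_{ji}(M_j)$, check that this does not change the inverse limit and that the induced transition maps on the $N_i$ are surjective, and then use the uniform bound to see that $\#N_i$ is nondecreasing and bounded, hence eventually constant, at which point all further transition maps are isomorphisms and the limit collapses to a single $N_{i_0}$. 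Both proofs are correct and short. The paper's is slicker if one already has the profinite machinery at hand, and works with no reference to the combinatorics of the transition maps; yours is self-contained and requires no topology, at the price of the bookkeeping you flag (directedness of the index set is what makes each descending family of images stabilize at a single stage, and also what lets you pass from "eventually constant along chains" to "constant on the tail" and thence to the identification of $\varprojlim N_i$ with one $N_{i_0}$). One small wording correction: what you invoke at the end is directedness of the index set, not cofinality; the phrase is harmless here but worth fixing. Since the paper only applies the lemma with index set $\N$, either approach is more general than strictly needed, and your reduction to surjective transitions is the standard tool one would reuse elsewhere.
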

\begin{proof}
Giving the groups $M_i$ the discrete topology makes $X$ a profinite group. Since the groups $M_i$ have bounded order, therefore it follows from \cite{Wilson} prop 1.1.6(b) that all open subgroups of $X$ have bounded index. Since every nontrivial profinite group contains a proper open subgroup and since open subgroups of profinite groups are themselves a profinite group therefore the indexes of the open subgroups in an infinite profinite group must be unbounded. It follows that $X$ must be finite.
\end{proof}

Assume that $(E, \pi, p)$ satisfies $(*)$ and $S$ is a finite set of primes of $K$ containing all the primes dividing $p$ and all the primes where $E$ has bad reduction. We now define $X^S_{f,p}(E/K_{\infty}):=\ilim R^S_p(E/K_n)$ where the inverse limit is taken over $n$ with respect to the corestriction maps. Also
define $Y^S_{f,p}(E/K_{\infty}):=\ilim R^S_p(E/K_{\infty})^{\Gamma_n}$ where the inverse limit is taken over $n$ with respect to the norm maps.

The restriction maps $\res: R^S_p(E/K_n) \to R^S_p(E/K_{\infty})^{\Gamma_n}$ induce a map

$$\Xi: X^S_{f,p}(E/K_{\infty}) \to Y^S_{f,p}(E/K_{\infty}).$$\\

We now have the following control theorem

\begin{theorem}\label{control_theorem1}
Assume that $(E, \pi, p)$ satisfies $(*)$. If $S$ is the set of primes of $K$ dividing $Np$ then the map $\Xi: X^S_{f,p}(E/K_{\infty}) \to Y^S_{f,p}(E/K_{\infty})$ is an injection with finite cokernel.
\end{theorem}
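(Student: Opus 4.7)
The plan is to work level-by-level and then pass to the inverse limit. At each finite level $n$, I would consider the commutative diagram whose rows are the defining sequences of the fine $p$-Selmer group over $K_n$ (top) and the $\Gamma_n$-invariants of the analogous sequence over $K_\infty$ (bottom), with vertical arrows induced by restriction. The Hochschild--Serre inflation--restriction sequence, combined with corollary 2.4 of \cite{Matar} (which under condition $(*)$-iii yields $E(K_\infty)[p^\infty]=0$ and hence kills $H^i(\Gamma_n, E[p]^{G_S(K_\infty)})$ for all $i\geq 1$), implies that the middle vertical map is an isomorphism. A snake-lemma argument then shows that the restriction map $\Xi_n \colon R^S_p(E/K_n)\to R^S_p(E/K_\infty)^{\Gamma_n}$ is injective and that its cokernel embeds into
\[
\prod_{v\in S_{K_n}} H^1\bigl(\Delta_{n,v},\, E[p]^{G_{K_{\infty,w}}}\bigr),
\]
where for each $v$ a prime $w$ of $K_\infty$ above $v$ has been chosen and $\Delta_{n,v}\subseteq\Gamma_n$ denotes the decomposition group.

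The main step, and the one that genuinely uses the arithmetic of the anticyclotomic extension, is to bound $|\coker(\Xi_n)|$ independently of $n$. The invariants $E[p]^{G_{K_{\infty,w}}}$ form a subgroup of $E[p]$, hence have order at most $p^2$, and $\Delta_{n,v}$ is a closed subgroup of $\Gamma_n\cong\Zp$, so it is either trivial or procyclic; in either case the $H^1$ of a finite coefficient module is finite of order at most $p^2$. Uniformity in $n$ follows from two standard properties of $K_\infty/K$: primes of $K$ above rational primes $\ell\mid N$ are unramified in $K_\infty$, and for each such $\ell$ the Frobenius generates either the trivial subgroup of $\Gamma$ (giving complete splitting and trivial local $H^1$) or an open subgroup of $\Gamma$, so the number of primes of $K_n$ above $\ell$ stabilizes in $n$; and the rational prime $p$ is (totally) ramified in $K_\infty/K$, so there are at most two primes of $K_n$ above $p$, each with $\Delta_{n,v}=\Gamma_n$. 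Combining these observations yields a constant $C$ with $|\coker(\Xi_n)|\leq C$ for all $n$.

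To conclude, I would pass to the inverse limit using the compatibility of the restriction maps with corestriction on the source system $\{R^S_p(E/K_n)\}$ and with the norm on the target system $\{R^S_p(E/K_\infty)^{\Gamma_n}\}$. Since each term appearing is a finite $p$-group, the Mittag--Leffler condition is automatic and $\ilim^1$ vanishes, producing a short exact sequence
\[
0 \to X^S_{f,p}(E/K_\infty) \to Y^S_{f,p}(E/K_\infty) \to \ilim_n \coker(\Xi_n) \to 0.
\]
Lemma \ref{inverse_limit_lemma} then shows that $\ilim_n \coker(\Xi_n)$ is finite, giving both the injectivity of $\Xi$ and the finiteness of its cokernel. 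The subtlest part I anticipate is the uniform bound on the local cokernels --- specifically, keeping track of the primes above $N$ at the first few levels before the decomposition behaviour has stabilised, and verifying that the cokernels really do assemble into a compatible inverse system under corestriction and norm.
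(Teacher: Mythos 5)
Your proposal is correct and proceeds by essentially the same ideas as the paper's proof: the vanishing of $E(K_\infty)[p^\infty]$ (corollary 2.4 of \cite{Matar}) to make the middle vertical maps isomorphisms, Brink's theorem to control the set $S_{K_\infty}$, and the uniform bound of $p^2$ on the local $H^1$ of the decomposition groups. The only difference is organizational: you apply the snake lemma at each finite level $n$ and then pass to the inverse limit (invoking $\varprojlim^1$-vanishing for the inverse system of finite groups $R^S_p(E/K_n)$), whereas the paper takes the inverse limit of the whole diagram first and then applies the snake lemma to that limit, identifying $\coker\Xi$ with $\img\uppsi\cap\ker\Xi''$ and computing $\ker\Xi''$ explicitly as a finite direct sum of $E(K_{\infty,w_i})[p]$; both routes are valid, and both ultimately rely on Lemma \ref{inverse_limit_lemma} or a direct finiteness computation.
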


\begin{proof}
Let $S_n$ be all the primes of $K_n$ dividing $S$ and $S_{\infty}$ all the primes of $K_{\infty}$ dividing $S$ and consider the following commutative diagram
\begin{equation}
\xymatrix{
0 \ar[r] & R^S_p(E/K_{\infty})^{\Gamma_n} \ar[r] & H^1(K_{\infty}, E[p])^{\Gamma_n} \ar[r]^-{\psi_{\infty}} & \underset{v \in S_{\infty}} {\bigoplus} H^1(K_{\infty,v}, E[p])^{\Gamma_n} \\
0 \ar[r] & R^S_p(E/K_n) \ar[u]_{s_n} \ar[r] & H^1(K_n, E[p]) \ar[u]_{h_n} \ar[r]^-{\psi_n}
& \underset{v \in S_n} {\bigoplus} H^1(K_{n,v}, E[p]) \ar[u]_{g_n}
}
\end{equation}

Taking the inverse limit of the groups in the top row with respect to the norm and the groups in the bottom row with respect to corestriction, we obtain the following diagram

\begin{equation}
\xymatrix{
0 \ar[r] & Y^S_{f,p}(E/K_{\infty}) \ar[r] & \ilim H^1(K_{\infty}, E[p])^{\Gamma_n} \ar[r]^-{\upphi}
& \ilim \underset{v \in S_{\infty}} \bigoplus H^1(K_{\infty,v}, E[p])^{\Gamma_n} \\
0 \ar[r] & X^S_{f,p}(E/K_{\infty}) \ar[u]_{\Xi} \ar[r] & \ilim H^1(K_n, E[p]) \ar[u]_{\Xi'} \ar[r]^-{\uppsi}
& \ilim \underset{v \in S_n} \bigoplus H^1(K_{n,v}, E[p]) \ar[u]_{\Xi''}
}
\end{equation}

By \cite{Matar} corollary 2.4 we have that $E(K_{\infty})[p^{\infty}]=\{0\}$ and so both $H^1(\Gamma_n, E(K_{\infty})[p^{\infty}])$ and $H^2(\Gamma_n, E(K_{\infty})[p^{\infty}])$ are trivial. This implies that the maps $h_n$ in the diagram (1) above are isomorphisms which in turn implies that the map $\Xi'$ in the diagram (2) is an isomorphism. Therefore by applying the snake lemma to the diagram (2) we see that $\Xi$ is an injection and $\coker \Xi =\img \uppsi \cap \ker \Xi''$ so the proof will be complete if we can show that $\ker \Xi''$ is finite.

Since we have assumed that all the primes dividing $N$ split in $K/\Q$, therefore it follows from \cite{Brink} th. 2 that the set $S_{\infty}$ is finite.

Now choose an $M$ such that $\#S_M=\#S_{\infty}$ and such that for every $w \in S_{\infty}$ we have $E(K_{\infty,w})[p]=E(K_{M,v})[p]$ where $v$ is the prime of $S_M$ below $w$.

Let $m=\#S_M$. For every $n \geq M$ we label the primes in $S_n$ as $v_1, v_2, ..., v_m$ and the primes of $S_{\infty}$ as $w_1, w_2, ..., w_m$. We choose a labelling such that if $k \geq j \geq M$ then $w_i \in S_{\infty}$ lies above $v_i \in S_k$ lies above $v_i \in S_j$. With this labelling we have

$$\ker \Xi''= \bigoplus_{i=1}^m \ilim_{n \geq M} H^1(\Gal(K_{\infty, w_i}/K_{n, v_i}), E(K_{\infty, w_i})[p])$$

where the inverse limit is taken over $n$ with respect to the corestriction maps.

For any $n \geq M$ and any $i$ we have $\Gal(K_{\infty, w_i}/K_{n, v_i})=\Gamma_n$, therefore if $g$ is a topological generator of $\Gamma$ we have $H^1(\Gal(K_{\infty, w_i}/K_{n, v_i}), E(K_{\infty, w_i})[p])=E(K_{\infty, w_i})[p]/(g^{p^n}-1)E(K_{\infty, w_i})[p]$ but $E(K_{\infty, w_i})[p]=E(K_{n, v_i})[p]$ so $(g^{p^n}-1)E(K_{\infty, w_i})[p]=\{0\}$ i.e. $H^1(\Gal(K_{\infty, w_i}/K_{n, v_i}), E(K_{\infty, w_i})[p])=E(K_{\infty, w_i})[p]$. For $n' \geq n \geq M$ one can check that the corestriction map from $H^1(\Gal(K_{\infty, w_i}/K_{n', v_i}), E(K_{\infty, w_i})[p])$ to $H^1(\Gal(K_{\infty, w_i}/K_{n, v_i}), E(K_{\infty, w_i})[p])$ is the identity map on $E(K_{\infty, w_i})[p]$ hence $\ilim_{n \geq M} H^1(\Gal(K_{\infty, w_i}/K_{n, v_i}), E(K_{\infty, w_i})[p])=E(K_{\infty, w_i})[p]$. This shows that $\ker \Xi''$ is finite which completes the proof.
\end{proof}

\begin{corollary}\label{control_theorem_corollary1}
Assume that $(E, \pi, p)$ satisfies $(*)$. If $S$ is the set of primes of $K$ dividing $Np$ then $R^S_p(E/K_{\infty})^{\dual}$ and $X^S_{f,p}(E/K_{\infty})$ are both finitely generated $\overbar{\Lambda}$-modules and $\corank_{\overbar{\Lambda}}(R^S_p(E/K_{\infty}))=\rank_{\overbar{\Lambda}}(X^S_{f,p}(E/K_{\infty}))$.
\end{corollary}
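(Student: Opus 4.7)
The plan is to deduce both claims from Theorem \ref{control_theorem1} and Proposition \ref{Iwasawa_rank_proposition}, the only missing input being the finite generation of $M := R^S_p(E/K_{\infty})^{\dual}$ over $\overbar{\Lambda}$. Since $M$ is a compact $\overbar{\Lambda}$-module (being the Pontryagin dual of a discrete $p$-torsion $\Gamma$-module) and $M/TM$ is Pontryagin dual to $R^S_p(E/K_{\infty})^{\Gamma}$, by topological Nakayama over the complete local ring $\overbar{\Lambda}=\Fp[[T]]$ it suffices to check that $R^S_p(E/K_{\infty})^{\Gamma}$ is finite.

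To get this, I would specialise the argument of Theorem \ref{control_theorem1} to the finite level $n=0$. Diagram (1) in the proof of that theorem becomes the usual control sequence: $h_0$ is the identity and the snake lemma yields an injection $R^S_p(E/K) \hookrightarrow R^S_p(E/K_{\infty})^{\Gamma}$ whose cokernel is bounded by $\ker(g_0)$. Inflation-restriction identifies the latter with a subgroup of a finite direct sum of groups $H^1(\Gal(K_{\infty,w}/K_v),E(K_{\infty,w})[p])$ indexed by the (finite) set of primes $v$ of $K$ above $S$; each decomposition group $\Gal(K_{\infty,w}/K_v)$ is a closed, hence procyclic, subgroup of $\Gamma\cong\Zp$, and $E(K_{\infty,w})[p]$ is finite, so each summand is finite. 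Combined with the finiteness of $R^S_p(E/K)\subseteq H^1(G_S(K),E[p])$, this gives the required finiteness.

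Once $M$ is known to be finitely generated, Proposition \ref{Iwasawa_rank_proposition} identifies $Y^S_{f,p}(E/K_{\infty}) = \ilim_n R^S_p(E/K_{\infty})^{\Gamma_n}$ with the free $\overbar{\Lambda}$-module $M^+=\Hom_{\overbar{\Lambda}}(M,\overbar{\Lambda})$ of rank $\rank_{\overbar{\Lambda}}(M) = \corank_{\overbar{\Lambda}}(R^S_p(E/K_{\infty}))$. Theorem \ref{control_theorem1} then realises $X^S_{f,p}(E/K_{\infty})$ as a $\overbar{\Lambda}$-submodule of this finitely generated module with finite cokernel; Noetherianity of $\overbar{\Lambda}$ forces $X^S_{f,p}(E/K_{\infty})$ to be finitely generated, and the ranks agree. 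The main obstacle, such as it is, lies entirely in the level-$n=0$ control computation, but it is routine given the finiteness of $S_K$ and the fact that $E(K_{\infty})[p^{\infty}]=\{0\}$ has already been used to produce diagram (1).
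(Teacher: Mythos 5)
Your proof is correct, and after the finite-generation step it proceeds exactly as the paper does; the difference is in how you establish that $R^S_p(E/K_{\infty})^{\dual}$ is a finitely generated $\overbar{\Lambda}$-module. The paper invokes Manin's theorem to see that $\Selm_{p^{\infty}}(E/K_{\infty})^{\dual}$ is a finitely generated $\Lambda$-module, uses $E(K_{\infty})[p^{\infty}]=\{0\}$ to identify $\Sel(E/K_{\infty})$ with $\Selm_{p^{\infty}}(E/K_{\infty})[p]$ and hence get $\Sel(E/K_{\infty})^{\dual}$ finitely generated over $\overbar{\Lambda}$, and then descends to $R^S_p(E/K_{\infty})^{\dual}$ via the inclusion $R^S_p(E/K_{\infty})\subseteq\Sel(E/K_{\infty})$. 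You instead argue from scratch by topological Nakayama over the local ring $\overbar{\Lambda}=\Fp[[T]]$, reducing to the finiteness of $R^S_p(E/K_{\infty})^{\Gamma}$, and you verify this by running the control diagram at level $n=0$: the restriction map $R^S_p(E/K)\to R^S_p(E/K_{\infty})^{\Gamma}$ is injective with cokernel landing in $\ker g_0$, and both sides are finite (indeed $R^S_p(E/K)\subseteq H^1(G_S(K),E[p])$, which is finite, and $\ker g_0$ is a subgroup of the finite group $\bigoplus_{v\in S_K}H^1(K_v,E[p])$, so there is no need even to invoke inflation–restriction for each summand). Your route is more self-contained, replacing the appeal to Manin's theorem with a short Nakayama computation that reuses only facts already in the proof of Theorem \ref{control_theorem1}; the paper's route is shorter given the literature and avoids redoing the level-zero control. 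Both are valid, and once finite generation of $R^S_p(E/K_{\infty})^{\dual}$ is in hand the remaining steps — Proposition \ref{Iwasawa_rank_proposition} making $Y^S_{f,p}(E/K_{\infty})$ free of the right rank, the injection with finite cokernel from Theorem \ref{control_theorem1} transferring rank and finite generation to $X^S_{f,p}(E/K_{\infty})$ — coincide with the paper's.
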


\begin{proof}
By \cite{Manin} th. 4.5 we know that $\Selinf(E/K_{\infty})^{\dual}$ is a finitely generated $\Lambda$-module. Since $E(K_{\infty})[p^{\infty}]=\{0\}$ by \cite{Matar} corollary 2.4, therefore we have an isomorphism $\Sel(E/K_{\infty}) \isomarrow \Selinf(E/K_{\infty})[p]$ and so $\Sel(E/K_{\infty})^{\dual}$ is a finitely generated $\overbar{\Lambda}$-module. The same then holds for $R^S_p(E/K_{\infty})^{\dual}$ since $R^S_p(E/K_{\infty}) \subseteq \Sel(E/K_{\infty})$ so by proposition \ref{Iwasawa_rank_proposition} $Y^S_{f,p}(E/K_{\infty})$ is also a finitely generated $\overbar{\Lambda}$-module. Since by the control theorem we have an injection $X^S_{f,p}(E/K_{\infty}) \hookrightarrow Y^S_{f,p}(E/K_{\infty})$, therefore we also have that $X^S_{f,p}(E/K_{\infty})$ is a finitely generated $\overbar{\Lambda}$-module. The corollary now follows from the control theorem and proposition \ref{Iwasawa_rank_proposition}.
\end{proof}

We now define $X_{s,p}(E/K_{\infty}):=\ilim \Sel(E/K_n)$ where the inverse limit is taken over $n$ with repect to the corestriction maps. Note that we have chosen to put an ``s" in the subscript so that the reader does not confuse this group with the group $X_p(E/K_{\infty})$ in \cite{Matar} which was defined in a different way.

We also define $Y_{s,p}(E/K_{\infty})=\ilim \Sel(E/K_{\infty})^{\Gamma_n}$ where the inverse limit is taken over $n$ with respect to the norm maps.

The restriction maps $\res: \Sel(E/K_n) \to \Sel(E/K_{\infty})^{\Gamma_n}$ induce a map

$$\Xi: X_{s,p}(E/K_{\infty}) \to Y_{s,p}(E/K_{\infty}).$$\\
We will now prove an Iwasawa-theoretic control theorem for the $p$-Selmer group. The theorem can be thought of as a ``mod $p$" analog of theorem 2.10 in \cite{Matar}.

\begin{theorem}\label{control_theorem2}
Suppose that $(E, \pi, p)$ satisfies $(*)$. Consider the map $\Xi$ induced by restriction

$$\Xi: X_{s,p}(E/K_{\infty}) \to Y_{s,p}(E/K_{\infty}).$$\\
\begin{enumerate}[(a)]
\item If $E$ has ordinary reduction at $p$, then $\Xi$ is an injection with finite cokernel.
\item If $E$ has supersingular reduction at $p$ and $p$ splits in $K/\Q$ and conjecture C(ii) is true, then $\Xi$ is an injection and $\rank_{\overbar{\Lambda}}(\coker \Xi) \le 2$.
\end{enumerate}

\end{theorem}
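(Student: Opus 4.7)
The plan is to follow closely the strategy of Theorem \ref{control_theorem1}, replacing the fine $p$-Selmer group by the full $p$-Selmer group. First I would form the analogous commutative diagram obtained by taking inverse limits (with respect to corestriction and to norm, respectively) of the defining exact sequences for $\Sel(E/K_n)$ and for $\Sel(E/K_{\infty})^{\Gamma_n}$. The crucial middle vertical map, from $\ilim H^1(K_n, E[p])$ to $\ilim H^1(K_{\infty}, E[p])^{\Gamma_n}$, is an isomorphism by \emph{exactly} the same input as in Theorem \ref{control_theorem1}: corollary 2.4 of \cite{Matar} yields $E(K_{\infty})[p^{\infty}]=\{0\}$, hence $H^i(\Gamma_n, E(K_{\infty})[p^{\infty}])=\{0\}$ for $i=1,2$, and inflation--restriction collapses to isomorphisms on the middle column.

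A snake-lemma chase then gives the injectivity of $\Xi$ together with an embedding $\coker \Xi \hookrightarrow \ker \Xi''$, where $\Xi''$ is the right-hand map on local cohomology $\ilim \bigoplus_{v\in S_n} H^1(K_{n,v}, E)[p] \to \ilim \bigoplus_{v \in S_{\infty}} H^1(K_{\infty,v}, E)[p]^{\Gamma_n}$. Since all primes of $N$ split in $K/\Q$, the set $S_{\infty}$ is finite by \cite{Brink} Thm.~2, so $\ker \Xi''$ decomposes as a finite direct sum over $v \in S_{\infty}$ of local inverse limits, and the task reduces to bounding the $\overbar{\Lambda}$-rank contributed by each prime.

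At primes $v \in S_{\infty}$ with $v \nmid p$, the local extension $K_{\infty,w}/K_{n,v}$ is unramified from some level on and $E(K_{\infty,w})[p^{\infty}]$ is finite, so the local inverse limit is finite and hence of $\overbar{\Lambda}$-rank zero. At primes $v \mid p$ in the ordinary case (a), one uses the ordinary filtration $0 \to \hat{E} \to E \to \tilde{E} \to 0$ over $K_{n,v}$: condition $(*)$-iv(a) prevents $p$-torsion in $\tilde{E}(\Fp)$, and conditions $(*)$-iv(b) and (c) guarantee that the formal part carries no $\Gamma_n$-invariant $p$-torsion for large $n$. Hence the local Galois cohomology controlling $\ker \Xi''$ at $v \mid p$ is finite, which completes (a).

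For (b), the two primes of $K_{\infty}$ above $p$ (there are exactly two, since $p$ splits in $K/\Q$ and each stays prime in the anticyclotomic tower) each contribute nontrivially because the supersingular formal group has height two. The goal is to show that each such prime contributes $\overbar{\Lambda}$-rank at most $1$. This is where conjecture C(ii) enters: via the local Kummer map, the hypothesis that the $\Gamma$-spans of the even- and odd-indexed Heegner point sequences in $E(K_{\fp_{\infty}})/p$ are infinite produces enough elements in the image of the local map $\Sel(E/K_n) \to H^1(K_{n, \fp_n}, E)[p]$ to cut out the local kernel and force the $\overbar{\Lambda}$-rank of the local contribution at each prime above $p$ to be at most one. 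Summing over the two primes yields the stated bound $\rank_{\overbar{\Lambda}}(\coker \Xi) \le 2$. The principal obstacle is precisely this last matching step: converting the qualitative infiniteness statement in conjecture C(ii) into a sharp $\overbar{\Lambda}$-rank bound will require a careful local analysis in the spirit of the arguments used to extract the rank and $\mu$-invariant consequences of Conjecture A in \cite{Matar}.
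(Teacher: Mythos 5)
Your plan for part (a) diverges from the paper's: rather than redoing the local analysis at primes above $p$ (which would essentially amount to reproving the hard part of Mazur's control theorem), the paper simply invokes Mazur's control theorem for $\Selinf$, transfers to $\Sel_p$ via $E(K_{\infty})[p^{\infty}]=0$, and passes to inverse limits with Lemma~\ref{inverse_limit_lemma}. That is not a gap --- your route could in principle be made to work --- but it is a much longer path and your sketch of the ordinary case at $v\mid p$ (``conditions $(*)$-iv(b),(c) guarantee no $\Gamma_n$-invariant $p$-torsion'') is far from the actual estimate that would be required.

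Part (b) contains a genuine gap. You propose to bound $\coker\Xi$ by showing that the \emph{local kernel} $\ker\Xi''$ has $\overbar{\Lambda}$-rank $\le 1$ at each of the two primes above $p$, with conjecture~C(ii) supplying the bound. This does not work. In the supersingular case $H^1(K_{\infty,v},E)[p]=0$ for $v\mid p$ (Coates--Greenberg), so $\Xi''$ is the \emph{zero map} on the $p$-components, and $\ker\Xi''$ contains all of $\bigoplus_{i=1,2}\ilim H^1(K_{\fp_{n,i}},E)[p]$, which has $\overbar{\Lambda}$-rank $4$ (each factor is dual to $E(K_{\fp_{\infty,i}})\otimes\Fp$, of corank $2$). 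The naive embedding $\coker\Xi\hookrightarrow\ker\Xi''$ therefore only gives a bound of $4$. The actual argument uses that the snake lemma gives $\coker\Xi=\ker\Xi''\cap\img\uppsi$, then passes through the Cassels--Poitou--Tate exact sequence to identify $\img\uppsi=\ker\uptheta$ where $\uptheta$ maps into $\Sel(E/K_{\infty})^{\dual}$; dualizing turns the problem into bounding $\corank_{\overbar{\Lambda}}(\coker\hat\uptheta)$, where $\hat\uptheta:\Sel(E/K_{\infty})\to\prod_{i}E(K_{\fp_{\infty,i}})\otimes\Fp$ is restriction, and here conjecture~C(ii) supplies $\corank_{\overbar{\Lambda}}(\img\hat\uptheta)\ge 2$ via the Heegner submodule $M=\dlim R_{2n}\alpha_{2n}+\dlim R_{2n+1}\alpha_{2n+1}$. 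Also note that the local map ``$\Sel(E/K_n)\to H^1(K_{n,\fp_n},E)[p]$'' you invoke is zero by definition of the Selmer group; the Heegner points enter through the restriction $\Sel(E/K_{\infty})\to E(K_{\fp_{\infty,i}})\otimes\Fp$, not through the image in $H^1(\cdot,E)[p]$.
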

\begin{proof}
First we prove part (a): Assume that $E$ has ordinary reduction at $p$. From Mazur's control theorem (\cite{Mazur}; see also \cite{Gb_LNM} and \cite{Gb_IIC}) using the fact that $E(K_{\infty})[p^{\infty}]=\{0\}$ (\cite{Matar} corollary 2.4) we get that for any $n$ the restriction map

$$\res_n: \Selinf(E/K_n) \to \Selinf(E/K_{\infty})^{\Gamma_n}$$\\
is an injection with finite cokernel of bounded order as $n$ varies. Since $E(K_{\infty})[p^{\infty}]=\{0\}$, therefore for any $n$ we have an isomorphism $\Sel(E/K_n) \isomarrow \Selinf(E/K_n)[p]$ and an isomorphism $\Sel(E/K_{\infty}) \isomarrow \Selinf(E/K_{\infty})[p]$. Therefore for any $n$ the restriction map gives an exact sequence

$$\res_n: 0 \longrightarrow \Sel(E/K_n) \longrightarrow \Sel(E/K_{\infty})^{\Gamma_n} \longrightarrow C_n \longrightarrow 0$$\\
where $C_n$ is finite and of bounded order as $n$ varies. Part (a) then follows from this by taking inverse limits and using lemma \ref{inverse_limit_lemma}.

Now we prove part (b). The proof of this part is very similar to the proof of theorem 2.10(b) in \cite{Matar}. Assume that $E$ has supersingular reduction at $p$, $p$ splits in $K/\Q$ and conjecture C(ii) is true. Define $S=\{p\} \cup \{l \; \text{prime}: l|N\}$. For any $n$, with this set $S$, we define $S_n$ to be the set of primes of $K_n$ above those in $S$ and $S_{\infty}$ to be the primes of $K_{\infty}$ above those in $S$. Now define $K_S$ to be the maximal extension of $K$ unramified outside $S$, $G_S(K_n)=\Gal(K_S/K_n)$ and $G_S(K_{\infty})=\Gal(K_S/K_{\infty})$. Note that since we have assumed all the primes dividing $N$ to split in $K/\Q$, therefore it follows from theorem 2 of \cite{Brink} that the set $S_{\infty}$ is finite.

For any $K_n$ it is well-known that the $p$-Selmer group $\Sel(E/K_n)$ may be defined as

$$\displaystyle 0 \longrightarrow \Sel(E/K_n) \longrightarrow H^1(G_S(K_n), E[p])\longrightarrow \prod_{v\in S_n} H^1(K_{n,v} E)[p].$$

We may also define $\Sel(E/K_{\infty})$ as

$$\displaystyle 0 \longrightarrow \Sel(E/K_{\infty}) \longrightarrow H^1(G_S(K_{\infty}), E[p])\longrightarrow \prod_{v\in S_{\infty}} H^1(K_{\infty,v} E)[p].$$

For any $n$ consider the following commutative diagram:

\begin{equation}
\xymatrix{
0 \ar[r] & \Sel(E/K_{\infty})^{\Gamma_n} \ar[r] & H^1(G_S(K_{\infty}), E[p])^{\Gamma_n} \ar[r]^-{\psi_{\infty}}
& \underset{v \in S_{\infty}}{\bigoplus} H^1(K_{\infty,v}, E)[p]^{\Gamma_n} \\
0 \ar[r] & \Sel(E/K_n) \ar[u]_{s_n} \ar[r] & H^1(G_S(K_n), E[p]) \ar[u]_{h_n} \ar[r]^-{\psi_n}
& \underset{v \in S_n} {\bigoplus} H^1(K_{n,v}, E)[p] \ar[u]_{g_n}
}
\end{equation}

The vertical maps in the above diagram are restriction. Let us note a few things related to this diagram:\\

(1) The maps $h_n$ are isomorphisms: This follows from the fact that $H^1(\Gamma_n, E(K_{\infty})[p^m])$ and $H^2(\Gamma_n, E(K_{\infty})[p^m])$ are both trivial because $E(K_{\infty})[p^{\infty}]=\{0\}$ (\cite{Matar} corollary 2.4).\\

(2) For any $v \in S_{\infty}$ above $p$ we have $H^1(K_{{\infty},v}, E)[p]=\{0\}$: The result follows from \cite{CG} cor. 3.2 as explained in \cite{Gb_LNM} pg. 70. Note that the fact that $E$ has supersingular reduction at $p$ is crucial for this result.\\

(3) For any $v \in S_n$ not dividing $p$ we have that $H^1(K_{n,v}, E)[p]$ is finite and of bounded order as $n$ varies: This follows from 2 facts. First, by Tate duality for abelian varieties over local fields (\cite{Milne} cor. 3.4) we have that $H^1(K_{n,v}, E)[p]$ is isomorphic to the dual of $E(K_{n,v})/p$. Secondly, if $l$ is the rational prime lying below $v$, then by Mattuck's theorems we have that $E(K_{n,v}) \cong \Z_l^d \times T$ where $d=[K_{n,v}:\Q_l]$ and $T$ is a finite group. Therefore it follows from these 2 facts that $\#H^1(K_{n,v}, E)[p]\le p^2$.\\

(4) Let $\fp_1$ and $\fp_2$ be the primes of $K$ above $p$. Since we have assumed that the class number of $K$ is relatively prime to $p$, therefore both $\fp_1$ and $\fp_2$ are totally ramified in $K_{\infty}/K$. So in particular there are only 2 primes $\fp_{n,1}$ and $\fp_{n,2}$ of $K_n$ above $p$ and 2 primes $\fp_{\infty,1}$ and $\fp_{\infty,2}$ of $K_{\infty}$ above $p$.\\

Let $\tilde{S}_{\infty}=S_{\infty} \backslash \{\fp_{\infty,1}, \fp_{\infty,2}\}$ (see (4)). Taking the points (2)-(4) into consideration, we take the inverse limit of the objects in the diagram above over $n$ (using the corestriction map for the bottom row and the norm map for the top row) to obtain the following diagram where the group $T$ is finite (using point (3) and lemma \ref{inverse_limit_lemma})

\begin{equation}
\xymatrix{
0 \ar[r] & Y_{s,p}(E/K_{\infty}) \ar[r] & \ilim H^1(G_S(K_{\infty}), E[p])^{\Gamma_n} \ar[r]^-{\upphi}
& \underset{v \in \tilde{S}_{\infty}}{\bigoplus} \ilim H^1(K_{\infty,v}, E)[p]^{\Gamma_n} \\
0 \ar[r] & X_{s,p}(E/K_{\infty}) \ar[u]_{\Xi} \ar[r] & \ilim H^1(G_S(K_n), E[p]) \ar[u]_{\Xi'} \ar[r]^-{\uppsi}
& T \times \underset{i=1,2}{\bigoplus} \ilim H^1(K_{\fp_{n,i}}, E)[p] \ar[u]_{\Xi''}
}
\end{equation}

To ease the notation, in the above diagram we have denoted $K_{n,{\fp_{n,i}}}$ by $K_{\fp_{n,i}}$. Applying the snake lemma to this diagram we get

$$0 \to \ker \Xi \to \ker \Xi' \to \ker \Xi'' \cap \img \uppsi \to \coker \Xi \to \coker \Xi'$$\\
From point (1) above, it follows that $\Xi'$ is an isomorphism i.e. $\ker \Xi'=0$ and $\coker \Xi'=0$. Therefore from the above sequence we get that $\ker \Xi=0$ as required. We also get that $\coker \Xi= \ker \Xi'' \cap \, \img \uppsi$. Since $T$ is finite and $\Xi''$ restricted to $H^1(K_{\fp_{n,i}}, E)[p]$ is the zero map, it follows that $\rank_{\overbar{\Lambda}}(\coker \Xi) = \rank_{\overbar{\Lambda}}(\img \uppsi)$.
Therefore we must show that $\rank_{\overbar{\Lambda}}(\img \uppsi) \le 2$. To study $\img \uppsi$ we use the Cassels-Poitou-Tate exact sequence (see \cite{CS}) which gives that the following sequence is exact

$$H^1(G_S(K_n), E[p]) \xrightarrow{\psi_n} \underset{v \in S_n}{\bigoplus} H^1(K_{n,v}, E)[p] \xrightarrow{\theta_n} \Sel(E/K_n)^{\dual}$$

We take the inverse limits of the groups over $n$ using the corestriction map. As all the groups we are dealing with are compact Hausdorff, the resulting sequence is also exact:

$$\ilim H^1(G_S(K_n), E[p]) \xrightarrow{\uppsi} T \times \underset{i=1,2}{\bigoplus} \ilim H^1(K_{\fp_{n,i}}, E)[p] \xrightarrow{\uptheta} \Sel(E/K_{\infty})^{\dual}$$

The fact that this sequence is exact means that $\img \uppsi = \ker \uptheta$. So to show that $\rank_{\overbar{\Lambda}}(\img \uppsi) \le 2$ it suffices to show that $\rank_{\overbar{\Lambda}}(\ker \uptheta) \le 2$ or equivalently, if $\hat{\uptheta}$ is the dual map, that $\corank_{\overbar{\Lambda}}(\coker \hat{\uptheta}) \le 2$.

By Tate local duality the dual of $H^1(K_{\fp_{n,i}}, E)[p]$ may be identified with $E(K_{\fp_{n,i}})/p$. Therefore using this fact, the map $\hat{\uptheta}$ becomes

$$\hat{\uptheta}: \Sel(E/K_{\infty}) \to E(K_{\fp_{\infty, 1}}) \otimes \Fp \times E(K_{\fp_{\infty, 2}}) \otimes \Fp.$$\\
This map is the usual map induced by restriction

$$H^1(K_{\infty}, E[p]) \to \underset{i=1,2}{\bigoplus} H^1(K_{\fp_{\infty,i}}, E[p]).$$\\
Note that if $c \in \Sel(E/K_{\infty}) \subset H^1(K_{\infty}, E[p])$ then its image under this map belongs to $E(K_{\fp_{\infty, 1}}) \otimes \Fp \times E(K_{\fp_{\infty,2}}) \otimes \Fp.$

To prove our result we will first calculate $\corank_{\overbar{\Lambda}}(E(K_{\fp_{\infty,i}}) \otimes \Fp)$. First we show that $E(K_{\fp_{\infty,i}})[p^{\infty}]=\{0\}$. Since $\Gamma=\Gal(K_{\fp_{\infty,i}}/\Qp)$ is pro-$p$, it suffices to show that $E(\Qp)[p^{\infty}]=E(K_{\fp_{\infty,i}})[p^{\infty}]^{\Gamma}=\{0\}$. But since $E$ has supersingular reduction at $p$, we have $E(\Qp)[p^{\infty}]=\hat{E}(p\Zp)[p^{\infty}]$ where $\hat{E}$ is the formal group of $E/\Qp$. The result then follows from the fact (\cite{Silverman} ch. 4 th. 6.1) that $\hat{E}(p\Zp)$ has no $p$-torsion if $p \geq 3$.

Since $E(K_{\fp_{\infty,i}})[p^{\infty}]=\{0\}$, therefore, as in point (1) above, the restriction map induces an isomorphism $H^1(K_{\fp_{n,i}}, E[p]) \isomarrow H^1(K_{\fp_{\infty, i}}, E[p])^{\Gamma_n}$. In addition from the local Euler-Poincar\'e characteristic (\cite{NSW} VII 7.3.1) and Tate local duality (\cite{NSW} VII 7.2.6) together with the Weil pairing we have $\dim_{\Fp}(H^1(K_{\fp_{n,i}}, E[p]))=2p^n+ 2 \dim_{\Fp}(E(K_{\fp_{n,i}})[p])$. But $E(K_{\fp_{n,i}})[p]=\{0\}$ and so $\dim_{\Fp}(H^1(K_{\fp_{n,i}}, E[p]))=2p^n$. Therefore we have shown that $\corank_{\overbar{\Lambda}}(H^1(K_{\fp_{\infty,i}}, E[p]))=2$.
But by point (2) above $E(K_{\fp_{\infty,i}}) \otimes \Fp$ is isomorphic to $H^1(K_{\fp_{\infty,i}}, E[p])$, so we also have $\corank_{\overbar{\Lambda}}(E(K_{\fp_{\infty,i}}) \otimes \Fp)=2.$

It follows that we have

$$\corank_{\overbar{\Lambda}}(E(K_{\fp_{\infty, 1}}) \otimes \Fp \times E(K_{\fp_{\infty, 2}}) \otimes \Fp)=4.$$\\
Therefore to show that $\corank_{\overbar{\Lambda}}(\coker \hat{\uptheta}) \le 2$ we only need to show that $\corank_{\overbar{\Lambda}}(\img \hat{\uptheta}) \geq 2$. This follows from conjecture C(ii) as is explained in the proof of theorem \ref{main_theorem2} in the next section: Consider the subgroup $M:=\dlim R_{2n} \alpha_{2n} + \dlim R_{2n+1} \alpha_{2n+1} \subseteq \Sel(E/K_{\infty})$.
The proof of theorem \ref{main_theorem2} shows that if conjecture C(ii) is true, then for $i=1$ or $i=2$ the image of $M$ under the map (induced by restriction)

$$E(K_{\infty}) \otimes \Fp \to E(K_{\fp_{\infty, i}}) \otimes \Fp$$\\
has $\overbar{\Lambda}$-corank greater than or equal to two. This implies the result.

\end{proof}

\begin{corollary}\label{control_theorem_corollary2}
Assume that $(E, \pi, p)$ satisfies $(*)$ then both $\Sel(E/K_{\infty})^{\; \dual}$ and $X_{s,p}(E/K_{\infty})$ are finitely generated $\overbar{\Lambda}$-modules
\begin{enumerate}[(a)]
\item If $E$ has ordinary reduction at $p$, then $\corank_{\overbar{\Lambda}}(\Sel(E/K_{\infty})) = \rank_{\overbar{\Lambda}}(X_{s,p}(E/K_{\infty})).$

\item If $E$ has supersingular reduction at $p$, $p$ splits in $K/\Q$ and conjecture C(ii) is true, then $\corank_{\overbar{\Lambda}}(\Sel(E/K_{\infty})) \le \rank_{\overbar{\Lambda}}(X_{s,p}(E/K_{\infty}))+2.$
\end{enumerate}
\end{corollary}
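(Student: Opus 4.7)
The plan is to derive this corollary directly from the control theorem (Theorem~\ref{control_theorem2}) and Proposition~\ref{Iwasawa_rank_proposition}, in the same pattern as Corollary~\ref{control_theorem_corollary1}. First I would establish that $\Sel(E/K_{\infty})^{\dual}$ is a finitely generated $\overbar{\Lambda}$-module: by \cite{Manin} th.~4.5 the $\Lambda$-module $\Selinf(E/K_{\infty})^{\dual}$ is finitely generated, and since $E(K_{\infty})[p^{\infty}]=\{0\}$ by \cite{Matar} corollary 2.4, the multiplication-by-$p$ exact sequence yields $\Sel(E/K_{\infty}) \isomarrow \Selinf(E/K_{\infty})[p]$, whose Pontryagin dual is $\Selinf(E/K_{\infty})^{\dual}/p$. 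This quotient is finitely generated over $\overbar{\Lambda}=\Lambda/p\Lambda$.

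Next, apply Proposition~\ref{Iwasawa_rank_proposition} with $M = \Sel(E/K_{\infty})^{\dual}$, noting that $M^{\dual}=\Sel(E/K_{\infty})$ by Pontryagin double duality. The proposition then supplies an isomorphism
$$\Hom_{\overbar{\Lambda}}\bigl(\Sel(E/K_{\infty})^{\dual},\overbar{\Lambda}\bigr) \;\cong\; \ilim_n \Sel(E/K_{\infty})^{\Gamma_n} \;=\; Y_{s,p}(E/K_{\infty}),$$
realising $Y_{s,p}(E/K_{\infty})$ as a free $\overbar{\Lambda}$-module of rank equal to $\rank_{\overbar{\Lambda}}(\Sel(E/K_{\infty})^{\dual}) = \corank_{\overbar{\Lambda}}(\Sel(E/K_{\infty}))$. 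In particular $Y_{s,p}(E/K_{\infty})$ is finitely generated over $\overbar{\Lambda}$, and the injection $X_{s,p}(E/K_{\infty}) \hookrightarrow Y_{s,p}(E/K_{\infty})$ provided by Theorem~\ref{control_theorem2} makes $X_{s,p}(E/K_{\infty})$ finitely generated as well.

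For part (a), Theorem~\ref{control_theorem2}(a) asserts that $\Xi$ has finite cokernel, so $X_{s,p}(E/K_{\infty})$ and $Y_{s,p}(E/K_{\infty})$ share the same $\overbar{\Lambda}$-rank, giving the claimed equality. For part (b), Theorem~\ref{control_theorem2}(b) gives $\rank_{\overbar{\Lambda}}(\coker \Xi) \le 2$, hence
$$\rank_{\overbar{\Lambda}}\bigl(X_{s,p}(E/K_{\infty})\bigr) \;\ge\; \rank_{\overbar{\Lambda}}\bigl(Y_{s,p}(E/K_{\infty})\bigr) - 2 \;=\; \corank_{\overbar{\Lambda}}\bigl(\Sel(E/K_{\infty})\bigr) - 2,$$
which rearranges to the desired inequality. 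The entire argument is essentially formal once Theorem~\ref{control_theorem2} is in hand; the only mild point requiring attention is checking that the transition maps in the inverse limit defining $Y_{s,p}(E/K_{\infty})$ are precisely the ones produced by Proposition~\ref{Iwasawa_rank_proposition} via the isomorphism $\Hom_{\Fp}(M_{\Gamma_n},\Fp) \cong \Hom_{\Fp[G_n]}(M_{\Gamma_n},\Fp[G_n])$, so there is no genuine obstacle to overcome.
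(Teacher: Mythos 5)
Your proof is correct and follows exactly the same route the paper takes: finite generation of $\Sel(E/K_\infty)^{\dual}$ over $\overbar{\Lambda}$ via Manin and the $p$-torsion-freeness isomorphism, then Proposition~\ref{Iwasawa_rank_proposition} applied to $M=\Sel(E/K_\infty)^{\dual}$ to identify $Y_{s,p}(E/K_\infty)$ as a free $\overbar{\Lambda}$-module of the relevant rank, then the injection and cokernel bound from Theorem~\ref{control_theorem2}. You have merely spelled out the final sentence of the paper's proof (which is left terse, and in fact contains a copy-paste slip writing $X^S_{f,p}, Y^S_{f,p}$ where $X_{s,p}, Y_{s,p}$ are meant), so there is nothing to change.
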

\begin{proof}
By \cite{Manin} th. 4.5 we know that $\Selinf(E/K_{\infty})^{\dual}$ is a finitely generated $\Lambda$-module. Since $E(K_{\infty})[p^{\infty}]=\{0\}$ by \cite{Matar} corollary 2.4, therefore we have an isomorphism $\Sel(E/K_{\infty}) \isomarrow \Selinf(E/K_{\infty})[p]$ and so $\Sel(E/K_{\infty})^{\dual}$ is a finitely generated $\overbar{\Lambda}$-module. Therefore by proposition \ref{Iwasawa_rank_proposition}, $Y^S_{f,p}(E/K_{\infty})$ is also a finitely generated $\overbar{\Lambda}$-module. Since by the control theorem we have an injection $X^S_{f,p}(E/K_{\infty}) \hookrightarrow Y^S_{f,p}(E/K_{\infty})$, therefore we also have that $X^S_{f,p}(E/K_{\infty})$ is a finitely generated $\overbar{\Lambda}$-module. The corollary now follows from the control theorem and proposition \ref{Iwasawa_rank_proposition}.
\end{proof}

\section{Proofs of Main Theorems}

Before proving the theorems listed in the introduction we record the following theorem which is essentially the main result proven in \cite{Matar}

\begin{theorem}\label{Iwasawa_rank_theorem}
Assume that $(E, \pi, p)$ satisfies $(*)$. Then we have
\begin{enumerate}
\item If $E$ has ordinary reduction at $p$, then $\rank_{\overbar{\Lambda}}(X_{s,p}(E/K_{\infty})) \le 1$.
\item If $E$ has supersingular reduction at $p$, $p$ splits in $K/\Q$ and conjecture $A$ is true, then $X_{s,p}(E/K_{\infty})=\{0\}$.
\end{enumerate}
\end{theorem}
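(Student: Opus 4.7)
The plan is to deduce this theorem from the corresponding results of \cite{Matar} by means of the control theorem \ref{control_theorem2}, proposition \ref{Iwasawa_rank_proposition}, and the identification $\Sel(E/K_n)\cong\Selinf(E/K_n)[p]$ coming from $E(K_\infty)[p^\infty]=\{0\}$ (\cite{Matar} Corollary 2.4). The two parts require slightly different translations: part (1) passes through the $\overbar{\Lambda}$-corank of $\Sel(E/K_\infty)$ via the control theorem, while part (2) uses a direct injection into the inverse limit of $p^\infty$-Selmer groups $X_{p^\infty}(E/K_\infty):=\ilim \Selinf(E/K_n)$ to collapse $X_{s,p}$ onto zero.

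For part (1), I would first invoke \cite{Matar}'s Kolyvagin argument applied mod $p$ to the Heegner system $\{\alpha_n\}$, which under condition $(*)$ and ordinary reduction yields the bound $\corank_{\overbar{\Lambda}}\Sel(E/K_\infty)\le 1$ (one Heegner generator per layer). By the control theorem \ref{control_theorem2}(a), the restriction map $\Xi:X_{s,p}(E/K_\infty)\to Y_{s,p}(E/K_\infty)$ is an injection with finite cokernel, so $\rank_{\overbar{\Lambda}}X_{s,p}(E/K_\infty)=\rank_{\overbar{\Lambda}}Y_{s,p}(E/K_\infty)$. Proposition \ref{Iwasawa_rank_proposition}, applied to $M=\Sel(E/K_\infty)^{\dual}$, identifies $Y_{s,p}(E/K_\infty)$ with the free $\overbar{\Lambda}$-module $(\Sel(E/K_\infty)^{\dual})^+$, whose rank equals $\corank_{\overbar{\Lambda}}\Sel(E/K_\infty)$. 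Putting these together gives $\rank_{\overbar{\Lambda}}X_{s,p}(E/K_\infty)\le 1$.

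For part (2), the cokernel bound in \ref{control_theorem2}(b) is too weak (it only gives $\le 2$), so I would proceed differently. First, since Conjecture A implies Conjecture A* (as noted in the Introduction), the main supersingular result of \cite{Matar} applies and yields $X_{p^\infty}(E/K_\infty)=\{0\}$; the isogeny-invariance argument in \cite{Matar} transfers the vanishing from the strong Weil curve $E'$ to our $E$. Next, from $E(K_\infty)[p^\infty]=\{0\}$ the inclusion $E[p]\hookrightarrow E[p^\infty]$ induces, for each $n$, an injection $\Sel(E/K_n)\cong \Selinf(E/K_n)[p]\hookrightarrow \Selinf(E/K_n)$, and corestriction is functorial in the coefficient module so the inclusion is compatible across the tower. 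Taking inverse limits therefore yields an injection $X_{s,p}(E/K_\infty)\hookrightarrow X_{p^\infty}(E/K_\infty)=\{0\}$, forcing $X_{s,p}(E/K_\infty)=\{0\}$.

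The main obstacle is bookkeeping rather than genuine new input: for (1) one must confirm that \cite{Matar}'s Kolyvagin bound is available in the mod-$p$ form $\corank_{\overbar{\Lambda}}\Sel(E/K_\infty)\le 1$ (extracted from the $p^\infty$ version by reducing mod $p$, using $E(K_\infty)[p^\infty]=\{0\}$), and for (2) one must verify that \cite{Matar}'s isogeny-invariance argument for the vanishing of $X_{p^\infty}$ carries over under the slightly stronger Conjecture A in place of A*. Both are routine given the setup, but care is needed because the author has emphasized that isogeny invariance cannot be taken for granted for the finer results elsewhere in the present paper.
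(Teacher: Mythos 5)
Your proposal takes a genuinely different route from the paper's, and both halves of it have gaps rooted in the fact that you are trying to deduce the theorem from \emph{conclusion-level} results of \cite{Matar} via control theorems, whereas the paper's actual proof reruns the underlying \emph{Kolyvagin-system argument} of \cite{Matar} with the group $X_{s,p}(E/K_\infty)$ substituted in for $X_p(E/K_\infty)$.

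For part (1), the logic is inverted. You start from a claimed input ``$\corank_{\overbar{\Lambda}}\Sel(E/K_\infty)\le 1$, extracted from the $p^\infty$ version of \cite{Matar} by reducing mod $p$,'' and then push it through Corollary~\ref{control_theorem_corollary2}(a). But \cite{Matar} only provides a bound on the $\Lambda$-corank of $\Selinf(E/K_\infty)$, and passing from a $\Lambda$-corank bound on $\Selinf$ to an $\overbar{\Lambda}$-corank bound on $\Sel\cong\Selinf[p]$ is exactly the assertion that $\mu=0$ --- which is the \emph{final} theorem of the present paper, proved \emph{using} Theorem~\ref{Iwasawa_rank_theorem} and Corollary~\ref{control_theorem_corollary2}. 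Taking $\corank_{\overbar{\Lambda}}\Sel(E/K_\infty)\le1$ as an input therefore makes the argument circular. Proposition 3.7 of \cite{Matar} bounds an inverse-limit group (there called $X_p$), not $\corank_{\overbar{\Lambda}}\Sel(E/K_\infty)$; the paper's proof transplants that bound onto $X_{s,p}$ directly and only afterwards deduces the corank statement.

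For part (2), you take $X_{p^\infty}(E/K_\infty):=\ilim\Selinf(E/K_n)$ and quote $X_{p^\infty}(E/K_\infty)=\{0\}$ from Theorem B of \cite{Matar}, then map $X_{s,p}$ into it. However, the paper warns explicitly that the group $X_p(E/K_\infty)$ in \cite{Matar} ``was defined in a different way'' and that it \emph{injects into} $X_{s,p}(E/K_\infty)$ as a submodule; by parallel, \cite{Matar}'s $X_{p^\infty}(E/K_\infty)$ is very likely a proper submodule of your $\ilim\Selinf(E/K_n)$, so its vanishing does not force the vanishing of the larger inverse limit, and hence does not force $X_{s,p}=\{0\}$. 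This is precisely the gap the paper closes by reconstructing the map $\uppsi'_\ell$ and rerunning the argument of the proof of Theorem B with $X_{s,p}$ in place of $X_p$. Your remark on isogeny is a side issue: transferring Theorem B of \cite{Matar} from the strong Weil curve $E'$ to $E$ is unproblematic (that statement is isogeny-invariant), but the paper's real reason for assuming Conjecture A (rather than A*) and the full hypothesis $(*)$ on $(E,\pi,p)$ is that the \emph{argument} must be run directly on $E$ --- Cornut's results need $p\nmid$ (number of geometrically connected components of $\ker\pi_*$) --- and the \emph{conclusion} about $X_{s,p}$ is not known to be isogeny-invariant. Your deduction bypasses that argument entirely, which is exactly where the proposal breaks down.
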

\begin{proof}
In section 2.3 of \cite{Matar} (using the notation in that paper) we constructed a map $\uppsi_{\ell}: \dlim H^1(K_{n,\ell}, E)[p] \to X_p(E/K_{\infty})^{\dual}$ as follows:
First by Tate local duality we have an isomorphism

$$\dlim H^1(K_{n,\ell}, E)[p] \cong (\ilim E(K_{n,\ell})/p)^{\dual}.$$\\
Next for any $n$ we have a restriction map $\res_{\ell}: \Sel(E/K_n) \to E(K_{n, \ell})/p$. Taking
inverse limits gives a map $$\res_{\ell}: X_{s,p}(E/K_{\infty}) \to \ilim E(K_{n, \ell})/p$$\\
Dualizing this map and using the Tate duality isomorphism we get a map

$$\uppsi_{\ell}': \dlim H^1(K_{n, \ell}, E)[p] \to X_{s,p}(E/K_{\infty})^{\dual}.$$\\
Since $X_p(E/K_{\infty})$ injects into $X_{s,p}(E/K_{\infty})$, therefore we have a surjection $X_{s,p}(E/K_{\infty})^{\dual} \to X_p(E/K_{\infty})^{\dual}$ and so composing the map $\uppsi_{\ell}'$ with this surjection we get our desired map

$$\uppsi_{\ell}: \dlim H^1(K_{n,\ell}, E)[p] \to X_p(E/K_{\infty})^{\dual}.$$\\
If we work with the map $\uppsi_{\ell}'$ rather than $\uppsi_{\ell}$ we obtain results identical those in \cite{Matar} where the group $X_p(E/K_{\infty})$ gets replaced by $X_{s,p}(E/K_{\infty})$ so proposition 3.7 in the ordinary case gives that $\rank_{\overbar{\Lambda}}(X_{s,p}(E/K_{\infty})) \le 1$ and in the supersingular case the proof of theorem B in section 4 gives that $X_{s,p}(E/K_{\infty})=\{0\}$

However the reader should be aware of one important detail. In the beginning of sections 3 and 4 in \cite{Matar} we noted that the statements of theorems A and B were invariant under isogeny and therefore it would suffice to assume that $E$ is a strong Weil curve with a modular parametrization $\pi: J_0(N) \to E$ having a geometrically connected kernel. This was important to apply the results of Cornut \cite{Cornut} which require that $p$ does not divide the number of geometrically connected components of the kernel of the modular parametrization.

Regarding the theorem that we are proving the author has not been able to prove that it is invariant under isogeny and therefore we cannot pass to a strong Weil curve as we did in \cite{Matar} but as $(E, \pi, p)$ was assumed to satisfy $(*)$, therefore $p$ does not divide the number of geometrically connected components of the kernel $\pi: J_0(N) \to E$ and therefore the results of Cornut \cite{Cornut} apply to $E$ without the need to refer to a strong Weil curve. Also we have assumed in the supersingular case that conjecture A is satisfied (rather than conjecture A* in  \cite{Matar}) so we may work with the elliptic curve $E$ directly rather than working with an isogenous strong Weil curve.
\end{proof}

We now prove the theorems in the introduction

\begin{theorem}\label{main_theorem1}
Assume that $(E, \pi, p)$ satisfies $(*)$, $p$ splits in $K/\Q$ and $E$ has supersingular reduction at $p$, then conjecture A and conjecture B (for $\mathcal{E}=E$, $F=K$ and $l=p$) are equivalent.
\end{theorem}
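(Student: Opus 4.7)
The plan is to establish the equivalence by reformulating conjecture $B$ as the vanishing of $X^S_{f,p}(E/K_\infty)$, where $S$ denotes the set of rational primes dividing $Np$. Once this reformulation is in place, the implication $A \Rightarrow B$ is essentially free from theorem \ref{Iwasawa_rank_theorem}(2), whereas the converse requires an inverse Kolyvagin argument.

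First I would set up the reformulation. Because $E(K_\infty)[p^\infty] = \{0\}$ (\cite{Matar} corollary 2.4), the natural map identifies $R^S_p(E/K_\infty)$ with $R_{p^\infty}(E/K_\infty)[p]$, so dualizing gives $R^S_p(E/K_\infty)^{\dual} \cong R_{p^\infty}(E/K_\infty)^{\dual}/p$. Since $R_{p^\infty}(E/K_\infty)^{\dual}$ is a finitely generated $\Lambda$-module (as in the proof of corollary \ref{control_theorem_corollary1}), the topological Nakayama lemma yields that $R_{p^\infty}(E/K_\infty)^{\dual}$ is finitely generated over $\Zp$ --- i.e.\ conjecture $B$ holds --- exactly when $R^S_p(E/K_\infty)$ is finite. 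Invoking corollary \ref{control_theorem_corollary1} together with the fact that $X^S_{f,p}(E/K_\infty)$ embeds into the free $\overbar{\Lambda}$-module $Y^S_{f,p}(E/K_\infty)$ via the control theorem, finiteness of $R^S_p(E/K_\infty)$ is in turn equivalent to the vanishing $X^S_{f,p}(E/K_\infty) = \{0\}$.

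For the direction $A \Rightarrow B$ I would apply theorem \ref{Iwasawa_rank_theorem}(2), which gives $X_{s,p}(E/K_\infty) = \{0\}$, and then observe that the inclusions $R^S_p(E/K_n) \subseteq \Sel(E/K_n)$ at each finite level are preserved by corestriction. Passing to inverse limits yields an injection $X^S_{f,p}(E/K_\infty) \hookrightarrow X_{s,p}(E/K_\infty) = \{0\}$, which forces $X^S_{f,p}(E/K_\infty) = \{0\}$ and therefore conjecture $B$ by the reformulation above.

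The converse $B \Rightarrow A$ is the principal obstacle. Assuming $X^S_{f,p}(E/K_\infty) = \{0\}$, I would argue by contrapositive and retrace in reverse the Kolyvagin-derivative construction underlying theorem \ref{Iwasawa_rank_theorem}(2) and \cite{Matar}. If the $\overbar{\Lambda}$-submodule $M \subseteq E(K_\infty)/p$ generated by the points $\alpha_n$ had corank at most one, the Euler system derived from $\{\alpha_n\}$ would fail to annihilate some nonzero class $\xi$ in the dual of $X_{s,p}(E/K_\infty)$. The decisive step is to verify that, in the supersingular setting with $p$ split in $K/\Q$, this obstructing class actually lies in $X^S_{f,p}(E/K_\infty)^{\dual}$ rather than merely in $X_{s,p}(E/K_\infty)^{\dual}$, contradicting the vanishing hypothesis. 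For this one exploits the vanishing $E(K_{\fp_{\infty,i}})[p^\infty] = \{0\}$ together with the $\overbar{\Lambda}$-rank computation for $H^1(K_{\fp_{\infty,i}}, E[p])$ carried out in the proof of theorem \ref{control_theorem2}(b), both of which force the local components of the Kolyvagin-derivative obstructions at the two primes above $p$ to vanish. Carrying out this fine-Selmer localization of the Kolyvagin obstruction --- that is, verifying that the obstruction produced by a low-corank Heegner submodule satisfies the strictly stronger fine local conditions at $v \mid p$, not merely the Selmer conditions --- is the technical heart of the argument and is what I expect to be the main difficulty.
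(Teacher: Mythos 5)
Your reformulation of conjecture B as the vanishing of $X^S_{f,p}(E/K_\infty)$, and your argument for the implication $A \Rightarrow B$, are essentially the same as the paper's: both pass from conjecture A to $X_{s,p}(E/K_\infty)=\{0\}$ via theorem \ref{Iwasawa_rank_theorem}(2), then use $X^S_{f,p} \hookrightarrow X_{s,p}$ and the rank-corank duality of corollary \ref{control_theorem_corollary1} to conclude $R^S_p(E/K_\infty)$ is finite, and finally compare $R^S_p(E/K_\infty)$ with $R_{p^\infty}(E/K_\infty)[p]$. (One small imprecision: the natural map $R^S_p(E/K_\infty) \to R_{p^\infty}(E/K_\infty)[p]$ is not an isomorphism in general --- its cokernel is controlled by the groups $E(K_{\infty,v})[p^\infty]/p$ at the finitely many $v \in S_{K_\infty}$ --- it is an injection with finite cokernel, which is what both directions of the reformulation actually require.)

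The converse $B \Rightarrow A$ is where your proposal has a genuine gap. You describe an ``inverse Kolyvagin argument'' --- retracing the derivative construction, producing an obstructing class, and then arguing it must satisfy fine local conditions --- but you give no concrete mechanism for any of these steps, and you explicitly flag the decisive localization step as the main difficulty without resolving it. The paper does not run anything like a reverse Kolyvagin argument here. Instead, since theorem 4.1 of \cite{Matar} already shows that each of $\dlim R_{2n}\alpha_{2n}$ and $\dlim R_{2n+1}\alpha_{2n+1}$ has $\overbar{\Lambda}$-corank at least one, conjecture A reduces to showing their intersection is finite. The paper accomplishes this by introducing Kobayashi's plus/minus subgroups $E^{\pm}(K_{\fp_{\infty,i}})$, invoking the identity $E^+(K_{\fp_{\infty,i}})\otimes\Fp \cap E^-(K_{\fp_{\infty,i}})\otimes\Fp = E(\Qp)\otimes\Fp$ (via Ciperiani--Wiles and Iovita--Pollack), and observing that the trace relation $\text{Tr}_{K_{n+1}/K_n}(\alpha_{n+1})=-\alpha_{n-1}$ places $\dlim R_{2n}\alpha_{2n}$ in $\Sel^+$ and $\dlim R_{2n+1}\alpha_{2n+1}$ in $\Sel^-$. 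A snake-lemma comparison then shows that $\Sel^+\cap\Sel^- = \Selm^1_p(E/K_\infty)$ is finite if and only if $R^S_p(E/K_\infty)$ is, and conjecture B supplies the latter. None of this machinery appears in your sketch, and without it I do not see how you would actually produce or locate the obstructing class you posit, let alone establish that it survives to the fine Selmer dual. There is also a structural worry: you propose to cite the $\overbar{\Lambda}$-rank computation from the proof of theorem \ref{control_theorem2}(b), but that theorem assumes conjecture C(ii), which the paper later shows to be equivalent to conjecture B, so leaning on it in the direction $B\Rightarrow A$ risks a circularity you would need to untangle explicitly.
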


\begin{proof}
Assume that $(E, \pi, p)$ satisfies $(*)$, $p$ splits in $K/\Q$, $E$ has supersingular reduction at $p$ and conjecture A is true.
Let $S$ be the set of primes of $K$ dividing $Np$. Since for any $n$ we have that $R^S_p(E/K_n)$ is contained in $\Sel(E/K_n)$, therefore $X^S_{f,p}(E/K_{\infty})$ is contained in $X_{s,p}(E/K_{\infty})$. But the latter group is trivial by theorem \ref{Iwasawa_rank_theorem} and so $X^S_{f,p}(E/K_{\infty})$ is trivial as well. Therefore it follows from corollary \ref{control_theorem_corollary1} that $R^S_p(E/K_{\infty})$ is finite. Now consider the natural map $R^S_p(E/K_{\infty}) \to R_{p^{\infty}}(E/K_{\infty})[p]$. We claim this map has a finite cokernel.

This follows from 3 facts. First, by \cite{Matar} corollary 2.4 we have $E(K_{\infty})[p^{\infty}]=\{0\}$ and therefore it follows that we have an isomorphism $H^1(K_{\infty}, E[p]) \isomarrow H^1(K_{\infty}, E[p^{\infty}])[p]$. Secondly, since we have assumed that all the primes dividing $N$ split in $K/\Q$ it follows from theorem 2 of \cite{Brink} that the set of primes of $K_{\infty}$ above $S$ is finite. Finally, it is easy to prove that for any prime $v$ of $K_{\infty}$ we have the kernel of the natural map $H^1(K_{\infty,v}, E[p]) \to H^1(K_{\infty,v}, E[p^{\infty}])[p]$ is finite.

The fact that the map $R^S_p(E/K_{\infty}) \to R_{p^{\infty}}(E/K_{\infty})[p]$ has a finite cokernel follows easily from these 3 facts. Therefore since $R^S_p(E/K_{\infty})$ is finite, we have that $R_{p^{\infty}}(E/K_{\infty})[p]$ is finite i.e. $R_{p^{\infty}}(E/K_{\infty})$ is cofinitely generated over $\Zp$ which proves conjecture B in this case.

Now assume that $(E, \pi, p)$ satisfies $(*)$, $p$ splits in $K/\Q$, $E$ has supersingular reduction at $p$ and conjecture B is true. Let us first make a few definitions. Let $\fp_1$ and $\fp_2$ be the 2 primes of $K$ above $p$. Since we have assumed that the class number of $K$ is prime to $p$ therefore both $\fp_1$ and $\fp_2$ are totally ramified in $K_{\infty}/K$. So in particular there are 2 primes $\fp_{n,1}$ and $\fp_{n,2}$ of $K_n$ above $p$ and 2 primes $\fp_{\infty, 1}$ and $\fp_{\infty, 2}$ of $K_{\infty}$ above $p$. We will denote the completion of $K_n$ with respect to $\fp_{n,i}$ by $K_{\fp_{n,i}}$ and we let $K_{\fp_{\infty,i}}$ be the union of the completions $K_{\fp_{n,i}}$. Following Kobayashi \cite{Kobayashi},  we define the following subgroups of $E(K_{\fp_{n,i}})$

$$E^+(K_{\fp_{n,i}}):=\{x \in E(K_{\fp_{n,i}}) \; | \; \text{Tr}_{n/m+1}(x) \in E(K_{\fp_{m,i}}) \; \text{for even}\; m : \; 0 \le m < n \}$$
$$E^-(K_{\fp_{n,i}}):=\{x \in E(K_{\fp_{n,i}}) \; | \; \text{Tr}_{n/m+1}(x) \in E(K_{\fp_{m,i}}) \; \text{for odd}\; m : \; 0 \le m < n \}.$$\\

We then define $E^+(K_{\fp_{\infty,i}}):= \bigcup_{n \in \N}$ $E^+(K_{\fp_{n,i}})$ and $E^-(K_{\fp_{\infty,i}}):= \bigcup_{n \in \N} E^-(K_{\fp_{n,i}}).$

We now analyze the intersection of $E^+(K_{\fp_{\infty,i}}) \otimes \Fp$ and $E^-(K_{\fp_{\infty,i}}) \otimes \Fp$ where we view both of these groups as subgroups of $E(K_{\fp_{\infty,i}}) \otimes \Fp$. By a proof which is identical to that of lemma 2.6.5 of \cite{CW}, using a result of Iovita and Pollack \cite{IP}, we have

\begin{equation}\label{intersection_equality}
E^+(K_{\fp_{\infty,i}}) \otimes \Fp \cap E^-(K_{\fp_{\infty,i}}) \otimes \Fp = E(\Qp) \otimes \Fp
\end{equation}\\
We now define some Selmer groups. First let $S$ be the set of primes of $K$ above $p$ and above all the primes dividing $N$. We let $K_S$ be the maximal extension of $K$ unramified outside $S$. For any field $F$ with $K \subseteq F \subseteq K_S$ we let $G_S(F)=\Gal(K_S/F)$ and we let $S_F$ be the set of primes of $F$ that lie over a prime of $S$.

Let $\tilde{S}_{K_{\infty}}=S_{K_{\infty}} \backslash \{\fp_{\infty,1}, \fp_{\infty,2}\}$. Since we have assumed that all the primes dividing $N$ split in $K/\Q$, therefore it follows from theorem 2 of \cite{Brink} that the set $\tilde{S}_{K_{\infty}}$ is finite.

Recall that the $p$-Selmer group of $E$ over $K_{\infty}$ is defined as

$$\displaystyle 0 \longrightarrow \Sel(E/K_{\infty}) \longrightarrow H^1(G_S(K_{\infty}), E[p])\longrightarrow \prod_{v\in S_{K_{\infty}}} \frac{H^1(K_{\infty, v}, E[p])}{E(K_{\infty,v})\otimes \Fp}.$$

Following Kobayashi \cite{Kobayashi}, we define the even (odd) $p$-Selmer group of $E$ over $K_{\infty}$ as

$$\displaystyle 0 \longrightarrow \Sel^{\pm}(E/K_{\infty}) \longrightarrow \Sel(E/K_{\infty}) \longrightarrow \prod_{i=1,2} \frac{H^1(K_{\fp_{\infty}, i}, E[p])}{E^{\pm}(K_{\fp_{\infty}, i})\otimes \Fp}.$$

We also define

$$\displaystyle 0 \longrightarrow \Selm^1_p(E/K_{\infty}) \longrightarrow \Sel(E/K_{\infty}) \longrightarrow \prod_{i=1,2} \frac{H^1(K_{\fp_{\infty}, i}, E[p])}{E(\Qp)\otimes \Fp}.$$\\
We are now ready to show that conjecture A is true in this case. According to theorem 4.1 of \cite{Matar} neither $\dlim R_{2n} \alpha_{2n}$ nor $\dlim R_{2n+1} \alpha_{2n+1}$ is $\overbar{\Lambda}$-cotorsion. Therefore the conjecture will be proven if we show that $\dlim R_{2n} \alpha_{2n} \cap \dlim R_{2n+1} \alpha_{2n+1}$ is finite.

Since $\text{Tr}_{K_{n+1}/K_n}(\alpha_{n+1})=-\alpha_{n-1}$ therefore we have that $\res_{\fp_{2n,i}} \alpha_{2n} \in E^+(K_{\fp_{2n,i}})$ and $\res_{\fp_{2n+1, i}} \alpha_{2n+1} \in E^-(K_{\fp_{2n+1, i}})$. This implies that $\dlim R_{2n} \alpha_{2n} \subseteq \Sel^+(E/K_{\infty})$ and $\dlim R_{2n+1} \alpha_{2n+1} \subseteq \Sel^-(E/K_{\infty})$ and so it suffices to show that $\Sel^+(E/K_{\infty}) \cap \Sel^-(E/K_{\infty})$ is finite. But by (\ref{intersection_equality}) above this intersection is $\Selm^1_p(E/K_{\infty})$.

Now define $$\mathcal{L}(K_{\infty})= \prod_{\makebox[0pt]{$\scriptstyle i=1,2$}} E(\Qp)\otimes \Fp \times \prod_{v \in \tilde{S}_{K_{\infty}}} E(K_{\infty,v})\otimes \Fp.$$

We claim that this group is finite. First of all, by Mattuck's theorem we have that $E(\Qp) \cong \Zp \times T$ where $T$ is finite group. Therefore $E(\Qp) \otimes \Fp$ is finite. Now let $v \in \tilde{S}_{K_{\infty}}$ and let $l \neq p$ be the rational prime below $v$. For any $n$ we will also let $v$ denote the prime of $K_n$ below $v$. By Mattuck's theorem we have $E(K_{n,v}) \cong \mathbb{Z}_l^r \times T$ where $r$ is some integer and $T$ is a finite group. Therefore $\# (E(K_{n,v}) \otimes \Fp) \le p^2$. It follows that $E(K_{\infty,v}) \otimes \Fp$ is finite. Since $\tilde{S}_{K_{\infty}}$ is finite, we have shown that $\mathcal{L}(K_{\infty})$ is in fact finite.

Now consider the following commutative diagram

\begin{equation}
\xymatrix{
& 0 \ar[d] \ar[r] & H^1(G_S(K_{\infty}), E[p]) \ar[d] \ar[r] & H^1(G_S(K_{\infty}), E[p]) \ar[d] \ar[r]  & 0 \\
0 \ar[r] & \mathcal{L}(K_{\infty}) \ar[r] & \displaystyle \prod_{\mathclap{v \in S_{K_{\infty}}}} H^1(K_{\infty,v}, E[p]) \ar[r] & \displaystyle \prod_{\mathclap{v \in S_{K_{\infty}}}} H^1(K_{\infty, v}, E[p])/ \mathcal{L}(K_{\infty}) \ar[r] & 0
}
\end{equation}

Applying the snake lemma to this diagram we get an exact sequence

$$0 \longrightarrow R^S_p(E/K_{\infty}) \longrightarrow \Selm^1_p(E/K_{\infty}) \longrightarrow \mathcal{L}(K_{\infty})$$\\
Since $\mathcal{L}(K_{\infty})$ is finite, the exact sequence shows that $R^S_p(E/K_{\infty})$ is finite if and only if $\Selm^1_p(E/K_{\infty})$ is finite. Therefore it suffices to show that $R^S_p(E/K_{\infty})$ is finite. To show this, we note that by \cite{Matar} corollary 2.4 we have $E(K_{\infty})[p^{\infty}]=\{0\}$ from which it follows that the natural map $R^S_p(E/K_{\infty}) \to R_{p^{\infty}}(E/K_{\infty})[p]$ is an injection. Since $R_{p^{\infty}}(E/K_{\infty})$ is cofinitely generated over $\Zp$, therefore $R_{p^{\infty}}(E/K_{\infty})[p]$ is finite. This in turn implies that $R^S_p(E/K_{\infty})$ is finite which completes the proof.
\end{proof}

\begin{theorem}\label{main_theorem2}
Assume that $(E, \pi, p)$ satisfies $(*)$ then we have
\begin{enumerate}[(a)]
\item If $E$ has ordinary reduction at $p$, then conjecture C(i) and conjecture B (for $\mathcal{E}=E$, $F=K$ and $l=p$) are equivalent.
\item If $p$ splits in $K/\Q$ and $E$ has supersingular reduction at $p$, then conjecture C(ii) and conjecture B (for $\mathcal{E}=E$, $F=K$ and $l=p$) are equivalent (and hence also equivalent to conjecture A by the previous theorem).
\end{enumerate}
\end{theorem}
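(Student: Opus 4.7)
The plan is to leverage the fact, established in the proof of Theorem \ref{main_theorem1}, that Conjecture B is equivalent to the fine $p$-Selmer group $R^S_p(E/K_\infty)$ being finite; equivalently (since the non-$p$ contributions to $\bigoplus_{v\in S_{K_\infty}}E(K_{\infty,v})\otimes \Fp$ are finite by Mattuck and Brink), to the restriction map $\Sel(E/K_\infty)\to\bigoplus_{v\mid p} E(K_{\infty,v})\otimes \Fp$ having finite kernel. I will translate Conjectures C(i) and C(ii) into corank statements about the image of the Heegner submodule(s) under this localization, and match them against the image of $\Sel(E/K_\infty)$ itself.

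For part (a), first pin down the Iwasawa invariants. Theorem \ref{Iwasawa_rank_theorem}(1) gives $\rank_{\overbar\Lambda}(X_{s,p}(E/K_\infty))\leq 1$; Theorem \ref{control_theorem2}(a) upgrades this to $\corank_{\overbar\Lambda}(\Sel(E/K_\infty))=\rank_{\overbar\Lambda}(X_{s,p}(E/K_\infty))\leq 1$; and the non-triviality of the Heegner line $A':=\dlim R_n\alpha_n$ (Theorem 3.1 of \cite{Matar}, via Cornut) forces equality, with $\Sel(E/K_\infty)/A'$ being $\overbar\Lambda$-cotorsion. Hence the images of $\Sel(E/K_\infty)$ and of $A'$ in $\bigoplus_{v\mid p}E(K_{\infty,v})\otimes \Fp$ have equal $\overbar\Lambda$-corank, and Conjecture B is equivalent to this image having corank $1$ --- equivalently, to at least one projection $A'_v\subseteq E(K_{\fp_\infty})\otimes \Fp$ being infinite, which is precisely Conjecture C(i).

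For part (b), Theorem \ref{main_theorem1} gives Conjecture A $\Leftrightarrow$ Conjecture B, so it suffices to prove Conjecture A $\Leftrightarrow$ Conjecture C(ii). Write $A:=\dlim R_{2n}\alpha_{2n}$, $B:=\dlim R_{2n+1}\alpha_{2n+1}$ (each of $\overbar\Lambda$-corank at least $1$ by Theorem 4.1 of \cite{Matar}), and $A_i$, $B_i$ for their respective images in $E(K_{\fp_{\infty,i}})\otimes \Fp$. The implication C(ii)$\Rightarrow$A is the clean direction: if for some $i$ both $A_i$ and $B_i$ are infinite then, since $A_i\subseteq E^+(K_{\fp_{\infty,i}})\otimes \Fp$, $B_i\subseteq E^-(K_{\fp_{\infty,i}})\otimes \Fp$ and $E^+\cap E^-\otimes \Fp=E(\Qp)\otimes \Fp$ is finite (the Iovita--Pollack equality used in the proof of Theorem \ref{main_theorem1}), one has $\corank(A_i+B_i)=2$, hence $\corank_{\overbar\Lambda}(A+B)\geq 2$, which is Conjecture A.

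For the converse A$\Rightarrow$C(ii), Conjecture A together with Theorem \ref{main_theorem1} makes $R^S_p(E/K_\infty)$ finite; since $\ker(\phi)\cap\Sel(E/K_\infty)$ differs from $R^S_p$ by the finite contribution at non-$p$ primes of $S$, the map $\phi|_{A+B}$ has finite kernel and in particular $\corank_{\overbar\Lambda}(\phi(A))=\corank_{\overbar\Lambda}(A)\geq 1$, $\corank_{\overbar\Lambda}(\phi(B))\geq 1$. Now invoke complex conjugation $\tau$: it swaps $\fp_{\infty,1}$ and $\fp_{\infty,2}$, and the standard relation $\tau\alpha_n=\pm\sigma_n\alpha_n$ for some $\sigma_n\in\Gamma$ gives $\tau(R_{2n}\alpha_{2n})=R_{2n}\alpha_{2n}$, hence $\tau A=A$ and $\tau B=B$; so $\corank_{\overbar\Lambda}(A_1)=\corank_{\overbar\Lambda}(A_2)$ and $\corank_{\overbar\Lambda}(B_1)=\corank_{\overbar\Lambda}(B_2)$. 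Were both $A_1, A_2$ of corank zero, $\phi(A)\subseteq A_1\oplus A_2$ would be finite, contradicting $\corank_{\overbar\Lambda}(\phi(A))\geq 1$; hence both have positive corank, i.e., $A_1, A_2$ are infinite, and similarly $B_1, B_2$, giving Conjecture C(ii) at both primes above $p$. The main obstacle is this A$\Rightarrow$C(ii) step: without $\tau$-symmetry one could a priori have $A_1$ infinite with $A_2$ finite while $B_1$ is finite and $B_2$ infinite --- a ``crossed'' configuration consistent with A (and hence B) but incompatible with C(ii); the $\tau$-stability of the Heegner submodules, coming from the Galois action of complex conjugation on Heegner points, is the key technical input that rules this out.
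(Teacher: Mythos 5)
Your proposal is correct and uses the same essential ingredients as the paper's proof (Cornut/Cornut--Vatsal for the non-triviality of the Heegner submodules, the Iovita--Pollack intersection $E^+\cap E^-\otimes\Fp=E(\Qp)\otimes\Fp$, Brink and Mattuck for finiteness at non-$p$ primes, and the $\tau$-symmetry of the Heegner module), but it organizes them more cleanly. The paper's argument in part~(a) and in the backward implication of part~(b) proceeds by contradiction and manipulates explicit annihilators $(g^{p^m}-1)$: from finiteness of the local image it extracts an $m$ with $(g^{p^m}-1)\,\dlim R_n\alpha_n \subseteq R^S_p(E/K_\infty)$, forcing the fine $p$-Selmer group to be infinite. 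You instead phrase everything in terms of the $\overbar{\Lambda}$-corank of the localization map $\phi:\Sel(E/K_\infty)\to\bigoplus_{v\mid p}E(K_{\infty,v})\otimes\Fp$: since $\Sel/A'$ is finite the images of $\Sel$ and $A'$ have the same corank, and Conjecture~B holds iff that corank is maximal. Your A$\Rightarrow$C(ii) step, using $\tau A=A$ to equalize the coranks at the two places above $p$ and then a simple pigeonhole, is a transparent repackaging of the paper's use of $\tau$; you also usefully make explicit the Heegner transformation rule $\tau\alpha_n=\pm\sigma_n\alpha_n$, which the paper invokes only implicitly when asserting $\tau M^{\Gamma_t}\subseteq M^{\Gamma_t}$. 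Both routes are sound; yours avoids the displayed annihilator computations and makes the ``crossed configuration'' obstruction --- which $\tau$-stability rules out --- visible, while the paper's version is closer in spirit to the control-theorem machinery it develops elsewhere.
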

\begin{proof}
First we prove (a): Assume that $E$ has ordinary reduction at $p$ and conjecture C(i) is true. Consider the module $\dlim R_n \alpha_n \subseteq \Sel(E/K_{\infty})$. By \cite{Matar} theorem 3.1, theorem \ref{Iwasawa_rank_theorem} and corollary \ref{control_theorem_corollary2} we have that both $\dlim R_n \alpha_n$ and $\Sel(E/K_{\infty})$ are finitely generated $\overbar{\Lambda}$-modules and that $\corank_{\overbar{\Lambda}}(\dlim R_n \alpha_n) \geq 1$ and $\corank_{\overbar{\Lambda}}(\Sel(E/K_{\infty})) \le 1$. Since $\dlim R_n \alpha_n$ is contained in $\Sel(E/K_{\infty})$, therefore it follows that both their $\overbar{\Lambda}$-coranks must be equal to one.

Now let $S$ be the set of primes of $K$ dividing $Np$. By the same argument in the proof of theorem \ref{main_theorem1}, we have that $R^S_p(E/K_{\infty}) \to R_{p^{\infty}}(E/K_{\infty})[p]$ has a finite cokernel and therefore to prove conjecture B we only have to show that $R^S_p(E/K_{\infty})$ is finite. Since $R^S_p(E/K_{\infty}) \subseteq \Sel(E/K_{\infty})$ and both $\dlim R_n \alpha_n$ and $\Sel(E/K_{\infty})$ have $\overbar{\Lambda}$-coranks equal to one, therefore it is is easy to see that the finiteness of $R^S_p(E/K_{\infty})$ will follow if we can show that $M:=\dlim R_n \alpha_n \cap R^S_p(E/K_{\infty})$ is finite.

Assume on the contrary that $M$ is infinite. Since finitely generated torsion $\overbar{\Lambda}$-modules are finite and $\corank_{\overbar{\Lambda}}(\dlim R_n \alpha_n)=1$, therefore $M$ must also have $\overbar{\Lambda}$-corank equal to one. Therefore $\dlim R_n \alpha_n /M$ is finite and so is annihilated by $g^{p^m}-1$ for some $m \in \N$ where $g$ is a topological generator of $\Gamma$ i.e. $(g^{p^m}-1)\dlim R_n \alpha_n \subset R^S_p(E/K_{\infty})$.

Let $\fp$ be a prime of $K$ above $p$. Since we have assumed that the class number of $K$ is prime to $p$, therefore $\fp$ is totally ramified in $K_{\infty}/K$. For any $n$ let $\fp_n$ be the prime of $K_n$ above $\fp$. Since $(g^{p^m}-1)\dlim R_n \alpha_n \subseteq R^S_p(E/K_{\infty})$, therefore by the definition of the fine $p$-Selmer group we have $(g^{p^m}-1)\dlim R_n \res_{\fp_n} \alpha_n = 0$ and so $\dlim R_n \res_{\fp_n} \alpha_n$ is $\overbar{\Lambda}$-cotorsion. This contradicts conjecture C(i) which completes the proof of the forward implication of part (a) of the theorem.

Now assume that $E$ has ordinary reduction at $p$ and conjecture B is true. We must show that conjecture C(i) is true. Arguing by contradiction, assume that conjecture C(i) is false i.e. that for every prime $\fp_{\infty}$ of $K_{\infty}$ above $p$ the $\Gamma$-submodule of $E(K_{\fp_{\infty}})/p$ generated by the Heegner points $\alpha_n$ is finite. Let $S$ be the set of primes of $K$ dividing $Np$ and let $g$ be a topological generator of $\Gamma$. We claim that there exists a $k \in \N$ such that $(g^{p^k}-1)\dlim R_n \alpha_n \subseteq R^S_p(E/K_{\infty})$. From the definition of $R^S_p(E/K_{\infty})$, to show this, we need to prove that there exists a $k \in \N$ such that for all $s \in \dlim R_n \alpha_n$ we have $\res_v((g^{p^k}-1)s)=0$ for any $v \in S_{K_{\infty}}$. Note that since we have assumed that all the primes dividing $N$ to split in $K/\Q$ therefore it follows from \cite{Brink} th. 2 that $S_{K_{\infty}}$ is finite.

Let $v \in S_{K_{\infty}}$ be a prime not dividing $p$. Since the set $S_{K_{\infty}}$ is finite, therefore the decomposition group of $v$ in $\Gamma$ is nontrivial and so is of the form $\Gamma^{p^m}$ for some $m$. Now let $s \in \dlim R_n \alpha_n$. Then $\res_v(s) \in E(K_{\infty,v})/p$ and we claim that $E(K_{\infty,v})/p$ is finite. To see this, let $l \neq p$ be the rational prime below $v$. For any $n$ we will also let $v$ denote the prime of $K_n$ below $v$. By Mattuck's theorem we have $E(K_{n,v}) \cong \mathbb{Z}_l^r \times T$ where $r$ is some integer and $T$ is a finite group. Therefore $\# (E(K_{n,v})/p) \le p^2$. It follows from this that  $E(K_{\infty,v})/p$ is in fact finite as claimed. The decomposition group $\Gamma^{p^m}$ acts on the finite group $E(K_{\infty,v})/p$ so there exists $k_v \geq m$ such that $(g^{p^{k_v}}-1)E(K_{\infty,v})/p=0$. It follows that we have $\res_v((g^{p^{k_v}}-1)s)=(g^{p^{k_v}}-1)\res_v(s)=0$.

Now let $v \in S_{K_{\infty}}$ be a prime above $p$. Since we have assumed that the class number of $K$ is relatively prime to $p$, therefore every prime of $K$ above $p$ is totally ramified in $K_{\infty}/K$. Let $s \in \dlim R_n \alpha_n$. Since conjecture C(i) is false therefore the $\Gamma$-submodule of $E(K_{\infty,v})/p$ generated by the points $\alpha_n$ is finite and so there exists $k_v \in N$ such that $g^{p^{k_v}}-1$ that annihilates this submodule. It follows that $\res_v((g^{p^{k_v}}-1)s)=(g^{p^{k_v}}-1)\res_v(s)=0$.

We have shown that for every $v \in S_{K_{\infty}}$ there exists $k_v \in \N$ such that $\res_v((g^{p^{k_v}}-1)s)=0$ for any $s \in \dlim R_n \alpha_n$. Then taking $k$ to be the maximum of the integers $k_v$ we get $\res_v((g^{p^k}-1)s)=0$ for all $s \in \dlim R_n \alpha_n$ and any $v \in S_{K_{\infty}}$. This implies that $(g^{p^m}-1)\dlim R_n \alpha_n \subseteq R^S_p(E/K_{\infty})$ as desired. By theorem 3.1 of \cite{Matar}, $\dlim R_n \alpha_n$ has $\overbar{\Lambda}$-corank greater than or equal to one. Therefore $(g^{p^m}-1)\dlim R_n \alpha_n$ also has $\overbar{\Lambda}$-corank greater than or equal to one and as this group is contained in $R^S_p(E/K_{\infty})$ it follows that $R^S_p(E/K_{\infty})$ is infinite. Now by corollary 2.4 of \cite{Matar}, $E(K_{\infty})[p^{\infty}]=\{0\}$ so the natural map $R^S_p(E/K_{\infty}) \to R_{p^{\infty}}(E/K_{\infty})[p]$ is an injection. This proves that $R_{p^{\infty}}(E/K_{\infty})[p]$ is infinite i.e. that $R_{p^{\infty}}(E/K_{\infty})$ is not cofinitely generated over $\Zp$. This contradicts our assumption that conjecture B is true which thereby proves the backward implication of part (a).

We now prove part (b): First we prove the forward implication. Assume that $p$ splits in $K/\Q$, $E$ has supersingular reduction at $p$ and conjecture C(ii) is true. Then there exists a prime $\fp_{\infty}$ of $K_{\infty}$ above $p$ such that both $\dlim R_{2n} \res_{\fp_{2n}} \alpha_{2n}$ and $\dlim R_{2n+1} \res_{\fp_{2n+1}} \alpha_{2n+1}$ are infinite (where $\fp_n$ be the prime of $K_n$ below $\fp_{\infty}$). In what follows let $K_{\fp_n}$ be the completion of $K_n$ with respect to $\fp_n$.

To prove conjecture B is true, it suffices by theorem \ref{main_theorem1} to prove that conjecture A is true. To prove this, it clearly suffices to show that $\dlim R_{2n} \res_{\fp_{2n}}\alpha_{2n} + \dlim R_{2n+1} \res_{\fp_{2n+1}}\alpha_{2n+1}$ has $\overbar{\Lambda}$-corank greater than or equal to two (note that both $\dlim R_{2n} \res_{\fp_{2n}}\alpha_{2n}$ and $\dlim R_{2n+1} \res_{\fp_{2n+1}}\alpha_{2n+1}$ are cofinitely generated $\overbar{\Lambda}$-modules since both $\dlim R_{2n} \alpha_{2n}$ and $\dlim R_{2n+1} \alpha_{2n+1}$ are cofinitely generated $\overbar{\Lambda}$-modules by the argument in theorem 4.1 of \cite{Matar}).

Since finitely generated torsion $\overbar{\Lambda}$-modules are finite, therefore conjecture C(ii) implies that both $\dlim R_{2n} \res_{\fp_{2n}} \alpha_{2n}$ and $\dlim R_{2n+1} \res_{\fp_{2n+1}} \alpha_{2n+1}$ have $\overbar{\Lambda}$-coranks greater than or equal to one so to prove conjecture A we only have to show that $X:=\dlim R_{2n} \res_{\fp_{2n}} \alpha_{2n} \cap \dlim R_{2n+1} \res_{\fp_{2n+1}} \alpha_{2n+1}$ is finite. To show this, we use the same argument as in theorem \ref{main_theorem1}.

Following Kobayashi \cite{Kobayashi} we define the following subgroups of $E(K_{\fp_n})$

$$E^+(K_{\fp_n}):=\{x \in E(K_{\fp_n}) \; | \; \text{Tr}_{n/m+1}(x) \in E(K_{\fp_m}) \; \text{for even}\; m : \; 0 \le m < n \}$$
$$E^-(K_{\fp_n}):=\{x \in E(K_{\fp_n}) \; | \; \text{Tr}_{n/m+1}(x) \in E(K_{\fp_m}) \; \text{for odd}\; m : \; 0 \le m < n \}.$$\\
We then define $E^+(K_{\fp_{\infty}}):= \bigcup_{n \in \N}$ $E^+(K_{\fp_n})$ and $E^-(K_{\fp_{\infty}}):= \bigcup_{n \in \N} E^-(K_{\fp_n}).$

We now analyze the intersection of $E^+(K_{\fp_{\infty}}) \otimes \Fp$ and $E^-(K_{\fp_{\infty}}) \otimes \Fp$ where we view both of these groups as subgroups of $E(K_{\fp_{\infty}}) \otimes \Fp$. By a proof identical to that of lemma 2.6.5 of \cite{CW}, using a result of Iovita and Pollack \cite{IP}, we have

$$E^+(K_{\fp_{\infty}}) \otimes \Fp \cap E^-(K_{\fp_{\infty}}) \otimes \Fp = E(\Qp) \otimes \Fp.$$\\
Since $\text{Tr}_{K_{n+1}/K_n}(\alpha_{n+1})=-\alpha_{n-1}$, therefore it follows that $\dlim R_{2n} \res_{\fp_{2n}} \alpha_{2n} \subseteq E^+(K_{\fp_{\infty}})$ and $\dlim R_{2n+1} \res_{\fp_{2n+1}} \alpha_{2n+1} \subseteq E^-(K_{\fp_{\infty}})$ so we have

$$X \subseteq E^+(K_{\fp_{\infty}}) \cap E^-(K_{\fp_{\infty}})=E(\Qp) \otimes \Fp.$$\\
But by Mattuck's theorem, $E(\Qp)\cong \Zp \times T$ where $T$ is a finite group. Therefore $E(\Qp) \otimes \Fp$ is finite which in turn makes $X$ finite. This completes the proof of the forward implication of part (b).

Now assume that $p$ splits in $K/\Q$, $E$ has supersingular reduction at $p$ and conjecture B is true. We will prove that backward implication i.e. that conjecture C(ii) is true. The proof goes along the same lines as the backward implication of part (a), however since we have to deal with the Heegner points $\alpha_{2n}$ and the points $\alpha_{2n+1}$ separately the proof is not as straightforward.

Arguing by contradiction, assume that conjecture C(ii) is false. Since $p$ splits in $K/\Q$ and the class number of $K$ is prime to $p$ therefore there are 2 primes $\fp_{\infty}$ and $\bar{\fp}_{\infty}$ of $K_{\infty}$ above $p$. Since conjecture C(ii) is false, either the $\Gamma$-submodule of $E(K_{\fp_{\infty}})/p$ generated by the points $\alpha_{2n}$ or the $\Gamma$-submodule generated by the points $\alpha_{2n+1}$ is finite. Let us assume that the the former submodule is finite (if the former submodule is infinite and the latter is finite our proof will be very similar). Then there exists an $k \in \N$ such that $(g^{p^k}-1)\dlim R_{2n} \res_{\fp_{2n}} \alpha_{2n}=0$ ($\fp_n$ is the prime of $K_n$ below $\fp_{\infty}$).

Now let $S$ be the set of primes of $K$ dividing $Np$. We claim that for some $m \in \N$ we have $(g^{p^m}-1)\dlim R_{2n} \alpha_{2n} \subseteq R^S_p(E/K_{\infty})$. By the proof of the backward implication of part (a), we see that to prove this it suffices to show that $\res_v((g^{p^k}-1)s)=0$ for any $s \in \dlim R_{2n} \alpha_{2n}$ any $v \in \{\fp_{\infty}, \bar{\fp}_{\infty}\}$. Let $\tau$ be a complex conjugation of $K_{\infty}$ so that $\tau\fp_{\infty}=\bar{\fp}_{\infty}$. Since $(g^{p^k}-1)\dlim R_{2n} \res_{\fp_{2n}} \alpha_{2n}=0$, therefore for any $s \in \dlim R_{2n} \alpha_{2n}$ we have $\res_{\fp_{\infty}}((g^{p^k}-1)s)=0$ so we only have to show that $\res_{\bar{\fp}_{\infty}}((g^{p^k}-1)s)=0$.

The automorphism $\tau$ induces a ``change of group'' automorphism $\tau_*$ on $H^1(K_{\infty}, E[p])$ and an isomorphism $\tau_*: H^1(K_{\fp_{\infty}}, E[p]) \to H^1(K_{\bar{\fp}_{\infty}}, E[p])$. For any $s \in H^1(K_{\infty}, E[p])$ we have $\res_{\bar{\fp}_{\infty}}(\tau_*(s))=\tau_*(\res_{\fp_{\infty}}(s))$. From this we see that to show that $\res_{\bar{\fp}_{\infty}}((g^{p^k}-1)s)=0$ for all $s \in \dlim R_{2n} \alpha_{2n}$, we only have to show that $(g^{p^k}-1)\dlim R_{2n} \alpha_{2n}$ is $\tau$-invariant.

Let $M=\dlim R_{2n} \alpha_{2n}$ and denote for any $t \in \N$ the group $\Gamma^{p^t}$ by $\Gamma_t$. Clearly to show that $(g^{p^k}-1)M$ is $\tau$-invariant it suffices to show that $(g-1)^{p^k}M^{\Gamma_t}=(g^{p^k}-1)M^{\Gamma_t}$ is $\tau$-invariant for any $t$ (note that $(g-1)^{p^k} \equiv g^{p^k}-1 \mod p)$. Recall that $\tau g \tau = g^{-1}$. Therefore we have $\tau (g-1)^{p^k} M^{\Gamma_t} = (g^{-1} -1)^{p^k} \tau M^{\Gamma_t}$. Again since $\tau g \tau =g^{-1}$, therefore it follows that $\tau M^{\Gamma_t} \subseteq M^{\Gamma_t}$ so our desired result will follow if we can can show that $(g^{-1}-1)^{p^k} M^{\Gamma_t} \subseteq (g-1)^{p^k} M^{\Gamma_t}$. But $\Gamma/\Gamma_t$ has order $p^t$ so therefore $(g^{-1}-1)^{p^k} M^{\Gamma_t}=(g^{p^t-1}-1)^{p^k} M^{\Gamma_t}$ and the desired result follows since $g-1$ divides $g^{p^k-1}-1$.

We have shown that $(g^{p^m}-1)\dlim R_{2n} \alpha_{2n} \subseteq R^S_p(E/K_{\infty})$ for some $m \in \N$. By theorem 4.1 of \cite{Matar} $\dlim R_{2n} \alpha_{2n}$ has $\overbar{\Lambda}$-corank greater than or equal to one. Therefore $(g^{p^m}-1)\dlim R_{2n} \alpha_{2n}$ also has $\overbar{\Lambda}$-corank greater than or equal to one and as this group is contained in $R^S_p(E/K_{\infty})$, it follows that $R^S_p(E/K_{\infty})$ is infinite. Now by corollary 2.4 of \cite{Matar} $E(K_{\infty})[p^{\infty}]=\{0\}$ so the natural map $R^S_p(E/K_{\infty}) \to R_{p^{\infty}}(E/K_{\infty})[p]$ is an injection. This proves that $R_{p^{\infty}}(E/K_{\infty})[p]$ is infinite i.e. that $R_{p^{\infty}}(E/K_{\infty})$ is not cofinitely generated over $\Zp$. This contradicts our assumption that conjecture B is true which thereby proves the backward implication of part (b).

\end{proof}

\begin{theorem}
Suppose that $(E, \pi, p)$ satisfies $(*)$ then we have
\begin{enumerate}[(a)]
\item If $E$ has ordinary reduction at $p$, then $\Selinf(E/K_{\infty})^{\dual}$ has $\Lambda$-rank equal to 1 and $\mu$-invariant equal to zero.
\item If $p$ splits in $K/\Q$, $E$ has supersingular reduction at $p$ and conjecture C(ii) is true, then $\Selinf(E/K_{\infty})^{\dual}$ has $\Lambda$-rank equal to 2 and $\mu$-invariant equal to zero.
\end{enumerate}
\end{theorem}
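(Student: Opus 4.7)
The plan is to compute, for $X := \Selinf(E/K_{\infty})^{\dual}$, both the $\Lambda$-rank $r$ and the number $k$ of $p$-elementary divisors appearing in its structure-theorem decomposition; the two assertions will follow once I show that $r$ matches the claimed value ($1$ or $2$) and that $k = 0$ (which is equivalent to $\mu(X) = 0$). The bridge between these invariants and the mod-$p$ Selmer group is the Pontryagin dual of the isomorphism $\Sel(E/K_{\infty}) \isomarrow \Selinf(E/K_{\infty})[p]$, available because $E(K_{\infty})[p^{\infty}] = \{0\}$ by \cite{Matar} corollary 2.4; dualizing yields $\Sel(E/K_{\infty})^{\dual} \cong X/pX$. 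Reducing the structure-theorem decomposition of $X$ modulo $p$ then gives $\rank_{\overbar{\Lambda}}(X/pX) = r + k$, so the vanishing of $\mu(X)$ is equivalent to the equality $r = \corank_{\overbar{\Lambda}}(\Sel(E/K_{\infty}))$.

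For part (a), I would first pin down $\corank_{\overbar{\Lambda}}(\Sel(E/K_{\infty})) = 1$. The upper bound comes from corollary \ref{control_theorem_corollary2}(a) together with theorem \ref{Iwasawa_rank_theorem}(1), while the matching lower bound is supplied by the inclusion $\dlim R_n \alpha_n \hookrightarrow \Sel(E/K_{\infty})$ combined with \cite{Matar} theorem 3.1 (the Heegner module has $\overbar{\Lambda}$-corank at least one in the ordinary case). This yields $r + k = 1$. To force $r \ge 1$, apply Mazur's control theorem in the ordinary anticyclotomic setting to obtain $\corank_{\Zp}(\Selinf(E/K_n)) = \rank_{\Zp}(X_{\Gamma_n}) + O(1)$, and invoke Cornut's theorem \cite{Cornut}---which applies directly to $E$ since $(*)$ ensures that $p$ does not divide the number of geometric components of $\ker \pi_*$---to bound the $\Zp$-rank of the Heegner submodule of $E(K_n) \otimes \Zp$ below by $p^n - O(1)$. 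The structure-theorem asymptotic $\rank_{\Zp}(X_{\Gamma_n}) = r \cdot p^n + O(1)$ valid for $n \gg 0$ then forces $r \ge 1$, so that $r = 1$ and $k = 0$.

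For part (b), the argument runs in parallel. Since conjecture C(ii) is equivalent to conjecture A by theorem \ref{main_theorem2}(b), theorem \ref{Iwasawa_rank_theorem}(2) gives $X_{s,p}(E/K_{\infty}) = \{0\}$; corollary \ref{control_theorem_corollary2}(b) then yields $\corank_{\overbar{\Lambda}}(\Sel(E/K_{\infty})) \le 2$, while conjecture A itself furnishes the matching lower bound via $\dlim R_n \alpha_n \hookrightarrow \Sel(E/K_{\infty})$. Hence $r + k = 2$. For the lower bound $r \ge 2$, I would invoke the main result of \cite{Matar} (which under conjecture A* gives $\corank_{\Lambda}(\Selinf(E'/K_{\infty})) = 2$ for the strong Weil curve $E'$ isogenous to $E$), combined with the isogeny-invariance of the $\Lambda$-rank of the dual Selmer group: the maps induced by an isogeny on $\Selinf$ have kernels and cokernels of $\Zp$-corank bounded uniformly in the layer, so the $\Lambda$-ranks coincide. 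This gives $r = 2$ for $E$, and combined with $r + k = 2$ forces $k = 0$ and $\mu = 0$.

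The main obstacle, and the only step at which one must reach outside the machinery developed in the paper proper, is the lower bound on $r$ in the supersingular case: Mazur's control theorem fails there, so one must either appeal to the isogeny-invariance of $\rank_\Lambda X$ as sketched above or prove a supersingular analog of Mazur's theorem via Kobayashi's $\pm$-Selmer decomposition. The first route is cleaner and is the one I would pursue.
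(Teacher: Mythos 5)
Your proposal is correct, and the central structure-theory reduction is the same as the paper's: write $X/pX \cong \Sel(E/K_\infty)^{\dual}$ (using $E(K_\infty)[p^\infty]=0$), so that $\rank_{\overbar\Lambda}(X/pX)=r+k$ with $r$ the $\Lambda$-rank of $X$ and $k$ the number of $p$-power elementary divisors, and then bound $\corank_{\overbar\Lambda}(\Sel(E/K_\infty))$ by $r$ to force $k=0$. Your use of theorem \ref{Iwasawa_rank_theorem} together with corollary \ref{control_theorem_corollary2} to get the $\overbar\Lambda$-corank bound is exactly the paper's step.

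Where you diverge from the paper is in how you establish the $\Lambda$-rank $r$. The paper simply cites theorems A and B of \cite{Matar}: because $(E,\pi,p)$ satisfies $(*)$ and (in the supersingular case) conjecture A rather than only A* is available, the proofs of those theorems apply verbatim to $E$ itself, yielding $r=1$ resp.\ $r=2$ with no further work. This is precisely the point of the remark in the proof of theorem \ref{Iwasawa_rank_theorem} about being able to work with $E$ directly rather than a strong Weil curve. You instead re-derive the lower bound $r\ge 1$ in the ordinary case via the Mazur control theorem and the Cornut asymptotic, effectively re-running the proof of \cite{Matar} theorem A, and in the supersingular case you route through the strong Weil curve $E'$ and appeal to isogeny-invariance of $\rank_\Lambda\Selinf^{\dual}$. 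Both of these are valid (isogeny-invariance of the $\Lambda$-rank of the $p^\infty$-Selmer dual is a standard consequence of the dual-isogeny argument, even though the $\mu$-invariant and the mod-$p$ objects $X_{s,p}$, $\Sel(E/K_\infty)$ are not obviously isogeny-invariant, which is why the paper insists on conjecture A over A*). But the detour is unnecessary: once $(E,\pi,p)$ satisfies $(*)$ and conjecture A holds, \cite{Matar} theorems A and B already give $r$ for $E$, so there is no need either to re-prove the ordinary asymptotics or to pass to $E'$ and invoke isogeny-invariance. Relatedly, computing $\corank_{\overbar\Lambda}(\Sel(E/K_\infty))$ exactly (via the Heegner lower bound) is more than you need; once $r$ is known the upper bound $\corank_{\overbar\Lambda}(\Sel(E/K_\infty))\le r$ alone yields $k=0$. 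Net effect: your argument is correct but reconstructs from scratch ingredients the paper can simply quote.
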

\begin{proof}
Note that by theorems \ref{main_theorem1} and \ref{main_theorem2}, conjecture C(ii) implies conjecture A (also see the comments made in the proof of theorem \ref{Iwasawa_rank_theorem} about conjecture A versus conjecture A*). Therefore by theorems A and B in \cite{Matar},  $\Sel(E/K_{\infty})^{\dual}$ has $\Lambda$-rank 1 in the ordinary case (part (a)) and rank 2 in supersingular case (part (b)). By corollary 2.4 in \cite{Matar} we have that $E(K_{\infty})[p^{\infty}]=\{0\}$ and therefore we have an isomorphism $\Sel(E/K_{\infty}) \isomarrow \Selinf(E/K_{\infty})[p]$. From this and the value of the $\Lambda$-corank of $\Selinf(E/K_{\infty})$, we see that to show that $\Selinf(E/K_{\infty})^{\dual}$ has $\mu$-invariant equal to zero we need to show that the $\overbar{\Lambda}$-corank of $\Sel(E/K_{\infty})$ is less than or equal to one in the ordinary case (part (a)) and less than or equal to two in the supersingular case (part (b)). This follows from theorem \ref{Iwasawa_rank_theorem} and corollary \ref{control_theorem_corollary2}.
\end{proof}

\section{Examples verifying Conjecture B}
In the setup for conjecture B let $\Gamma=\Gal(F^{anti}/F)$ and $\Lambda=\Zl[[\Gamma]]$ the corresponding Iwasawa algebra. Conjecture B predicts that  $R_{l^{\infty}}(\mathcal{E}/F^{anti})$ is cofinitely generated over $\Zl$. It is easy to see that this is equivalent to both of the following statements

\noindent (a) $R_{l^{\infty}}(\mathcal{E}/F^{anti})^{\dual}$ is a torsion $\Lambda$-module\\
(b) $R_{l^{\infty}}(\mathcal{E}/F^{anti})^{\dual}$ has $\mu$-invariant equal to zero\\

Statement (a) is equivalent to $H^2(G_S(F^{anti}), E[p^{\infty}])=0$ (see \cite{CS} lemma 3.1) and is usually called the weak Leopoldt conjecture. It has been proven by Bertolini \cite{Bertolini2} when $\mathcal{E}$ is defined over $\Q$ and $l$ is prime where $E$ has good ordinary reduction (together with some additional conditions listed in that paper). When $E$ has supersingular reduction at $l$ and $l$ splits in $F/\Q$ then the weak Leopoldt conjecture is also true. This follows from \cite{Ciperiani} theorem 3.1 and \cite{IP} theorem 6.1.

In this section we will produce examples verifying conjecture B. The examples will be constructed using the following theorem whose proof relies fundamentally on the work of Wuthrich \cite{Wuthrich1}.

Before stating the theorem, consider an elliptic curve $E$ defined over a number field $K$ and let $v$ be a prime of $K$. If $K_v$ denotes the completion of $K$ at $v$ then, as is standard, we let $E_0(K)$ denote the subgroup of $E(K_v)$ with nonsingular reduction modulo $v$ and $E_1(K_v)$ the subgroup of points in $E_0(K_v)$ that reduce to the identity. Write $c_v=[E_0(K):E_1(K)]$ and write $\log_{E,v}: E_1(K_v) \to K_v$ for the formal logarithm map. This map depends on our choice of a minimal Weierstrass equation for $E$ over $K_v$, but its values are well-defined up to multiplication by a unit in $O_v$, the ring of integers of $K_v$. Finally, let $i_v: E(K) \to E(K_v)$ be the natural embedding.

\begin{theorem}\label{Euler_char_theorem}
Let $E$ be an elliptic curve of conductor $N$ defined over $\Q$ and let $K$ be an imaginary quadratic field of discriminant $d_K$ such that all the primes dividing $N$ split in $K/\Q$. Let $p \nmid Nd_K$ be an odd rational prime such that $\Gal(\Q(E[p])/\Q)=GL_2(\Fp)$. For this prime $p$, let $K_{\infty}/K$ be the anticyclotomic $\Zp$-extension of $K$. Now if $K[1]$ is the Hilbert class field of $K$, then a choice of an ideal $\cN$ of $\cO_K$ such that $\cO_K/\cN \cong \cy{N}$ and a modular parametrization $X_0(N) \to E$ allows us to define a Heegner point $y_1 \in E(K[1])$. Let $y_K$ be the trace of this point down to $K$. We make the following assumptions

\begin{enumerate}[(a)]
\item The Heegner point $y_K \in E(K)$ has infinite order (equivalent to $L'(E/K,1) \neq 0$ by Zhang's \cite{Zhang} generalization to the  Gross-Zagier theorem \cite{GZ} \footnote{Gross and Zagier assume in their paper that the discriminant of $K$ is odd. This assumption is removed by Zhang}).
\item $p$ does not divide the image of the $y_K$ in $E(K)/E(K)_{tors}$
\item For every prime $v$ of $K$ dividing $p$ we have $p \nmid \#\tilde{E}(k_v)$ (where $k_v$ is the residue field of $K_v$ and $\tilde{E}$ is the reduced curve)
\item For every prime $v$ of $K$ we have $p \nmid c_v$
\item There exists a prime $v$ of $K$ above $p$ and a point $P \in \cap_{w |p} i_w^{-1}(E_1(K_w))$ such that $\ord_v(\log_{E,v}(i_v(P)))=1$
\end{enumerate}
Under the above assumptions, we have that $R_{p^{\infty}}(E/K_{\infty})$ is cofinitely generated over $\Zp$.
\end{theorem}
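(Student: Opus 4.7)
Since $R_{p^{\infty}}(E/K_{\infty})$ is cofinitely generated over $\Zp$ if and only if both (a) (weak Leopoldt) and (b) (vanishing $\mu$-invariant) listed at the beginning of this section hold, and since (a) is known in the cases of interest, the real task is the $\mu=0$ statement. By the three-fact argument used at the start of the proof of Theorem \ref{main_theorem1} ($E(K_{\infty})[p^{\infty}]=\{0\}$; finiteness of $S_{K_{\infty}}$ via \cite{Brink}; finiteness of the kernel of $H^1(K_{\infty,v}, E[p]) \to H^1(K_{\infty,v}, E[p^{\infty}])[p]$ at each $v$), this reduces to showing that $R^S_p(E/K_{\infty})$ is finite, where $S$ is the set of rational primes dividing $Np$. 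By Corollary \ref{control_theorem_corollary1}, the group $X^S_{f,p}(E/K_{\infty}) = \ilim R^S_p(E/K_n)$ is a finitely generated $\overbar{\Lambda}$-module whose rank equals $\corank_{\overbar{\Lambda}} R^S_p(E/K_{\infty})$, and since $\overbar{\Lambda}=\Fp[[T]]$ is a discrete valuation ring it is equivalent to show this inverse limit is finite; by Lemma \ref{inverse_limit_lemma}, it is enough to bound $\# R^S_p(E/K_n)$ independently of $n$.

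First I would treat the base case $n=0$. Assumption (a) together with Zhang's extension \cite{Zhang} of the Gross-Zagier formula \cite{GZ} yields $L'(E/K,1)\neq 0$, whereupon Kolyvagin's theorem (applicable under $\Gal(\Q(E[p])/\Q)=GL_2(\Fp)$) gives $\rank_{\Z} E(K)=1$ with $y_K$ generating $E(K)/E(K)_{\tor}$ up to finite index and $\Sha(E/K)$ finite. Assumption (b), combined with Kolyvagin's divisibility $p^{\ord_p([E(K):\Z y_K])}\cdot \Sha(E/K)[p^{\infty}]=0$, then forces $\Sha(E/K)[p^{\infty}]=0$, so $\Selinf(E/K) \cong E(K)\otimes \Qp/\Zp$ has $\Zp$-corank one and is generated by the class of $y_K$. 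The fine Selmer group $R_{p^{\infty}}(E/K)$ is the kernel of the localisation of $\Selinf(E/K)$ at the primes above $p$. Hypotheses (c) and (d) render the contributions of $\tilde{E}(k_v)$ and the Tamagawa factors $c_v$ invisible modulo $p$, while the valuation-one condition (e) on the formal logarithm guarantees that the image of the global point $P$ in $E(K_v)/p$ is non-trivial, which by Wuthrich's criterion \cite{Wuthrich1} forces $R_{p^{\infty}}(E/K)=0$.

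The main obstacle is to promote this vanishing to a uniform bound on $\#R^S_p(E/K_n)$ along the tower. For this I would apply a control theorem for the fine Selmer group of the same flavour as Theorem \ref{control_theorem1} at each finite level $n$: the restriction map $R^S_p(E/K) \to R^S_p(E/K_n)^{\Gal(K_n/K)}$ is injective because $E(K_{\infty})[p^{\infty}]=\{0\}$, and its cokernel sits inside a direct sum of local $H^1$-groups at primes of $S_n$. Away from $p$ these local terms are uniformly bounded by Mattuck's theorem, exactly as in point (3) of the proof of Theorem \ref{control_theorem2}; at the two primes $\fp_{n,i}$ above $p$ the valuation-one hypothesis (e) is precisely what is needed (via the local computation in \cite{Wuthrich1}) to keep the error terms bounded rather than growing with $n$. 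Combining this uniform local bound with the triviality $R^S_p(E/K)=0$ from the previous paragraph yields the required uniform bound on $\#R^S_p(E/K_n)$ and completes the proof. The hardest single ingredient is the local analysis at $\fp_{n,i}$ under (e): without the valuation-one condition on $\log_{E,v}(i_v(P))$, the local error terms in the control theorem would accumulate as $n$ grows, and the reduction back through Corollary \ref{control_theorem_corollary1} would fail.
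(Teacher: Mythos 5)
Your reduction to showing $\#R^S_p(E/K_n)$ is uniformly bounded is legitimate, and so is your treatment of the base level $n=0$ up to a point (via Kolyvagin, $\Sha(E/K)[p^{\infty}]=0$, rank one, etc.), but the crucial last step is a genuine gap. You propose a finite-level control theorem $R^S_p(E/K) \hookrightarrow R^S_p(E/K_n)^{G_n}$ with cokernel bounded independently of $n$, and claim that together with $R^S_p(E/K)=0$ this bounds $\#R^S_p(E/K_n)$. But a control theorem of this shape only bounds the $G_n$-\emph{invariants} of $R^S_p(E/K_n)$, not the group itself. For a finite $\Fp[G_n]$-module $M$ with $G_n\cong \Z/p^n\Z$, one has $\dim_{\Fp}M \le p^n \cdot \dim_{\Fp}M^{G_n}$, and no better in general (take $M=\Fp[G_n]$: one-dimensional invariants, $p^n$-dimensional module). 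So a uniform bound on $\dim_{\Fp}R^S_p(E/K_n)^{G_n}$ is compatible with $\#R^S_p(E/K_n)$ growing like $p^{p^n}$, and the argument collapses precisely where you flagged the hard local analysis. In effect you have shown the $\overbar{\Lambda}$-module $R^S_p(E/K_{\infty})^{\dual}$ has a bounded number of generators (which is just finite generation, already known from Corollary \ref{control_theorem_corollary1}), not that it is torsion.

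A secondary issue: the statement that condition (e) and \emph{Wuthrich's criterion} force $R_{p^{\infty}}(E/K)=0$ is not a result I recognise, and the hypotheses of the theorem do not yield it. What the hypotheses do give, following \cite{Wuthrich2}, is $T_p M_{p^{\infty}}(E/K)=0$ (since $M_{p^{\infty}}(E/K)\subseteq E(K)\otimes\Qp/\Zp\cong\Qp/\Zp$ with non-zero localisation) and $T_p\FSha_{p^{\infty}}(E/K)=0$ (since $\Sha(E/K)[p^{\infty}]=0$), whence $R_{p^{\infty}}(E/K)$ is \emph{finite}; but $M_{p^{\infty}}(E/K)$, hence $R_{p^{\infty}}(E/K)$, can be a non-trivial finite group, for instance when $y_K$ becomes locally $p$-divisible.

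The missing idea, and the one the paper actually uses, is Wuthrich's non-degenerate $p$-adic height pairing on the compact fine Selmer group $\mathfrak{R}_p(E/K)$ together with his Euler characteristic formula (\cite{Wuthrich1}, Theorem 6.1). The vanishing $\mathfrak{R}_p(E/K)=0$ (which does follow from $T_p R_{p^{\infty}}(E/K)=0$) makes the pairing trivially non-degenerate, and Wuthrich's theorem then gives that $R_{p^{\infty}}(E/K_{\infty})^{\dual}$ is $\Lambda$-torsion; this is the ``torsion'' input which your control-theorem argument cannot produce. The $\mu=0$ statement is then extracted by showing the Euler characteristic $f^*_R(0)$ is a $p$-adic unit, and that is where conditions (c), (d), (e) genuinely enter: (c) and (d) kill the $c_v^{(p)}$ and $\#\tilde{E}(k_v)$ contributions, and (e) is used to show, via the formal logarithm, that $\mathrm{Tors}_{\Zp}$ of the cokernel of the localisation map $E(K)\otimes\Zp\to\bigl(\prod_{v\mid p}E(K_v)\bigr)^{*}$ is trivial. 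Your proposal correctly identifies the target ($R^S_p(E/K_{\infty})$ finite) and correctly identifies where the hypotheses must bite, but the mechanism you propose -- propagating a base-level vanishing through a finite-level control theorem -- cannot reach the conclusion; you need the descent input from Wuthrich's height pairing (or an equivalent torsion result) to control the $\overbar{\Lambda}$-rank.
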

\begin{proof}
The proof of this theorem relies on the work of Wuthrich \cite{Wuthrich1} on the Euler characteristic of the fine Selmer group. First of all, let us consider the compact version of the fine Selmer group. Let $S$ be the finite set of primes of $K$ dividing $p$ and where $E$ has bad reduction. Let $K_S$ be the maximal extension of $K$ unramified outside of $S$ and $G_S(K)=\Gal(K_S/K)$. The compact fine Selmer group $\mathfrak{R}_p(E/K)$ is defined by the following exact sequence

$$\displaystyle 0 \longrightarrow \mathfrak{R}_p(E/K) \longrightarrow H^1(G_S(K), T_p E) \longrightarrow \prod_{v | p} H^1(K_v, T_p E).$$

In the above and what follows, if $M$ is an abelian group, then $T_p M$ will denote its $p$-adic Tate module.

We claim that $\mathfrak{R}_p(E/K)$ is trivial. To see this, first note that by the proof of lemma 3.1 in Wuthrich's paper we have that $\mathfrak{R}_p(E/K)$ injects into $T_p R_{p^{\infty}}(E/K)$ so we only have to show that $T_p R_{p^{\infty}}(E/K)$ is trivial. To prove this, note that by \cite{Wuthrich2} we have an exact sequence
\begin{equation}\label{exact_sequence}
\displaystyle 0 \longrightarrow M_{p^{\infty}}(E/K) \longrightarrow R_{p^{\infty}}(E/K) \longrightarrow \FSha_{p^{\infty}}(E/K) \longrightarrow 0
\end{equation}
where $M_{p^{\infty}}(E/K)$ is defined as $$0 \longrightarrow M_{p^{\infty}}(E/K) \longrightarrow E(K)\otimes \Qp/\Zp \longrightarrow \prod_{v |p} E(K_v) \otimes \Qp/\Zp.$$
and $\FSha_{p^{\infty}}(E/K)$ is the fine Tate-Shafarevich group defined as simply $\FSha_{p^{\infty}}(E/K)=R_{p^{\infty}}(E/K)/M_{p^{\infty}}(E/K)$. It is shown in \cite{Wuthrich2} that $\FSha_{p^{\infty}}(E/K)$ is a subgroup of $\Sha(E/K)[p^{\infty}]$.

From equation (\ref{exact_sequence}) above, to show that $T_p R_{p^{\infty}}(E/K)$ is trivial it suffices to show that both $T_p M_{p^{\infty}}(E/K)$ and $T_p \FSha_{p^{\infty}}(E/K)$ are trivial which we now show. Condition (a) of the theorem implies by the work of Kolyvagin \cite{Kolyvagin} that $E(K)$ has rank 1 and that $\Sha(E/K)$ is finite. Since $\Sha(E/K)$ is finite and $\FSha_{p^{\infty}}(E/K)$ is a subgroup of $\Sha(E/K)[p^{\infty}]$ therefore $T_p \FSha_{p^{\infty}}(E/K)$ is trivial. As for $T_p M_{p^{\infty}}(E/K)$, the fact that it is trivial follows easily from the definition of $M_{p^{\infty}}(E/K)$, and the fact that the rank of $E(K)$ is 1 and that by Mattuck's theorem for any prime $v$ of $K$ above $p$ we have $E(K_v) \cong \Zp^{[K_v:\Q_p]} \times T$ where $T$ is a finite group. Thus we have shown that $T_p R_{p^{\infty}}(E/K)$ is trivial which as we explained above proves that $\mathfrak{R}_p(E/K)$ is trivial.

We now return to Wuthrich's paper \cite{Wuthrich1} and refer to it for the rest of the proof. Attached to the anticyclotomic $\Zp$-extension $K_{\infty}/K$ Wuthrich defines a $p$-adic height pairing (he defines a pairing for any $\Zp$-extension of $K$):

$$\langle \; , \; \rangle_{K_{\infty}}: \mathfrak{R}_p(E/K) \times \mathfrak{R}_p(E/K) \longrightarrow \Qp.$$\\
Since $\mathfrak{R}_p(E/K)$ is trivial, therefore the above pairing is non-degenerate. This implies by theorem 6.1 in Wuthrich's paper that $R_{p^{\infty}}(E/K_{\infty})^{\dual}$ is a torsion $\Lambda$-module where $\Lambda$ is the Iwasawa algebra attached to the extension $K_{\infty}/K$. Let $\Gamma=\Gal(K_{\infty}/K)$. Choosing a topological generator $\gamma \in \Gamma$ allows us to identify the Iwasawa algebra $\Lambda$ with $\Zp[[T]]$ (via an isomorphism mapping $\gamma-1$ to $T$) and so with this choice of a topological generator $\gamma$  we may define the characteristic polynomial $f_R(T)$ of the torsion $\Lambda$-module $R_{p^{\infty}}(E/K_{\infty})^{\dual}$. If $k$ is the order of vanishing of $f_R(T)$, then define $f^*_R(0)=T^{-k}f_R(T)|_{T=0}$ (Using part (2) of theorem 6.1 in Wuthrich's paper the value of $k$ is equal to $\corank_{\Zp}(R_{p^{\infty}}(E/K))$. It is not hard to show this latter value is 1). $f^*_R(0)$ is called the Euler characteristic of $R_{p^{\infty}}(E/K_{\infty})^{\dual}$ and its valuation is independent of the choice of $\gamma$.

To prove that $R_{p^{\infty}}(E/K_{\infty})^{\dual}$ is finitely generated over $\Zp$ we only need to show that $f^*_R(0)$ is a $p$-adic unit. We show this using part (4) of theorem 6.1 in Wuthrich. Taking into account that $\mathfrak{R}_p(E/K)$ is trivial, Wuthrich's theorem gives
\begin{equation}\label{Euler_characteristic1}
f^*_R(0) \equiv \frac{\#T_{loc} \cdot \#(R_{p^{\infty}}(E/K)/\divs)}{\#T_{gl}\cdot \#J \cdot \#I} \hspace{0.5 cm} (\text{mod} \, \Zpu)
\end{equation}

In the above $R_{p^{\infty}}(E/K)/\divs$ means the quoient of $R_{p^{\infty}}(E/K)$ by its maximal divisible subgroup. $I$ is the cokernel of the injection $\mathfrak{R}_p(E/K) \hookrightarrow T_p R_{p^{\infty}}(E/K)$ defined in lemma 3.1 of Wuthrich's paper and $J$ is the cokernel of a certain map described in his paper. Now we turn to the description of $T_{gl}$ and $T_{loc}$. In what follows let $S$ be the primes of $K$ dividing $Np$

We have $T_{gl}=H^1(\Gamma, E(K_{\infty})[p^{\infty}])$ and $T_{loc}=\prod_{v \in S} H^1(\Gamma_v, E(K_{\infty,v})[p^{\infty}])$.

In the description of $T_{loc}$ the product runs over all primes $v$ in $S$ where for every such prime $v$ we choose a prime $w$ of $K_{\infty}$ above $v$. If $K_n$ is the subfield of $K_{\infty}$ of degree $p^n$ over $K$, we denote the union of the completions of the fields $K_n$ at $w$ by $K_{\infty,v}$ and the decomposition group of $w$ by $\Gamma_v$.

We now calculate the orders of $T_{gl}$ and $T_{loc}$. When $K_{\infty}$ is the cyclotomic $\Zp$-extension of $K$, Wuthrich uses a result of Imai \cite{Imai} which states that if $E$ is an elliptic curve defined over $\Qp$ with good reduction and $L/\Qp$ is the cyclotomic $\Zp$-extension, then $E(L)_{\text{tors}}$ is finite. Using Imai's result allows one to give a nice description of $T_{gl}$ and $T_{loc}$ in the cyclotomic case. But we cannot apply Imai's result in our case so instead we will show that $\#T_{gl}=1$ and that the order of $T_{loc}$ divides the value stated in Wuthrich's paper. This will suffice for our purpose.

First we show that the fields $\Q(E[p])$ and $K$ are disjoint. To see this, note that the only primes that can ramify in $\Q(E[p])/\Q$ are primes dividing $Np$, but since $N$ was assumed to split in $K/\Q$ and $p$ not to ramify in $K/\Q$, therefore the intersection of $\Q(E[p])$ and $K$ is an unramified extension of $\Q$ and is therefore $\Q$ itself so $\Q(E[p])$ and $K$ are indeed disjoint from which it follows that $E(K_{\infty})[p^{\infty}]^{\Gamma}=E(K)[p^{\infty}]=\{0\}$ which implies that $E(K_{\infty})[p^{\infty}]=\{0\}$ since $\Gamma$ is a pro-$p$ group. So therefore we have

\begin{equation}\label{T_gl}
\#T_{gl}=1=\#E(K)[p^{\infty}]
\end{equation}\\
We now calculate the order of $T_{loc}$. To do this, we need to calculate the order of $H^1(\Gamma_v, E(K_{\infty,v})[p^{\infty}])$ for any $v \in S$. First let $v \nmid  p$ i.e. $v$ divides $N$. Since we have assumed that all the primes dividing $N$ to split in $K/\Q$, therefore by \cite{Brink} theorem 2, $v$ does not split completely in $K_{\infty}/K$ and so by \cite{CS} lemma 3.4 we have $H^1(\Gamma_v, E(K_{\infty,v})[p^{\infty}])=c_v^{(p)}$ where $c_v^{(p)}$ indicates the largest power of $p$ dividing $c_v$. Now let $v | p$ and denote the maximal divisible subgroup of $E(K_{\infty,v})[p^{\infty}]$ by $\mathcal{D}$. In this case we see from the proof of lemma 4.2 in Wuthrich's paper that we have $\#H^1(\Gamma_v, E(K_{\infty})[p^{\infty}])=\#E(K_v)[p^{\infty}]/\#\mathcal{D}^{\Gamma}$. All together, the above 2 observations imply that

\begin{equation}\label{T_loc}
t \cdot \#T_{loc}=\prod_{v | p} \#E(K_v)[p^{\infty}] \cdot \prod_{v \nmid p} c_v^{(p)}\, \,  \text{for some} \; t \in \Z
\end{equation}

By condition (a) of the theorem and the work of Kolyvagin \cite{Kolyvagin}, we have that $\Sha(E/K)$ is finite and since $\FSha(E/K)$ is a subgroup of $\Sha(E/K)[p^{\infty}]$, it follows that $\FSha(E/K)$ is finite as well. Therefore combining (\ref{Euler_characteristic1}), (\ref{T_gl}) and (\ref{T_loc}), we see from the proof of corollary 6.2 in Wuthrich's paper that for some $t \in \Z$ we have

\begin{equation}\label{Euler_characteristic2}
t \cdot \#J \cdot f^*_R(0) \equiv \#\text{Tors}_{\Zp}(D) \cdot \prod_{v \nmid p} c_v^{(p)} \cdot \#\FSha_{p^{\infty}}(E/K) \hspace{0.5 cm} (\text{mod} \, \Zpu)
\end{equation}
where $D$ is the cokernel of the localization map (induced by the maps $i_v$) from $E(K)\otimes \Zp$ to the $p$-adic completion of $\prod_{v | p} E(K_v)$

As we explained above, to prove the theorem we only need to show that $f^*_R(0)$ is a $p$-adic unit. This will follow if we can show that the right-hand side of the above equivalence (\ref{Euler_characteristic2}) is a $p$-adic unit.

From condition (b) and since we have assumed that $p$ is odd and $\Gal(\Q(E[p])/\Q)=GL_2(\Fp)$, therefore by the work of Kolyvagin \cite{Kolyvagin} $\Sha(E/K)[p^{\infty}]=\{0\}$. So we have $\FSha_{p^{\infty}}(E/K)=\{0\}$ as well since $\FSha_{p^{\infty}}(E/K)$ is a subgroup of $\Sha(E/K)[p^{\infty}]$. Also condition (d) gives $\prod_{v \nmid p} c_v^{(p)}=1$ and so the right-hand side of the equivalence (\ref{Euler_characteristic2}) is $\#\text{Tors}_{\Zp}(D)$. So we see that the theorem follows from the following proposition.
\end{proof}

\begin{proposition}
If $D$ is the cokernel of the localization map (induced from the maps $i_v$) from $E(K) \otimes \Zp$ to the $p$-adic completion of $\prod_{v |p} E(K_v)$, then under conditions (a), (c) and (e) of theorem \ref{Euler_char_theorem} $\#\text{Tors}_{\Zp}(D)=1$.
\end{proposition}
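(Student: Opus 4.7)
The plan is to identify both ends of the localization map concretely and then use condition (e) to show the image is a primitive rank-one submodule of a rank-two free $\Zp$-module; this immediately forces $D$ to be torsion-free.

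First I would analyze the source: condition (a) together with Kolyvagin's theorem gives that $E(K)$ has rank one, and the hypothesis $\Gal(\Q(E[p])/\Q)=GL_2(\Fp)$ combined with $p\nmid Nd_K$ forces $\Q(E[p])$ and $K$ to be linearly disjoint over $\Q$, so $E(K)[p^{\infty}]=0$; hence $E(K)\otimes\Zp\cong\Zp$, generated modulo torsion by some point $P_0$. I would then describe the target. Since $p\nmid d_K$, every prime $v$ of $K$ above $p$ is unramified over $p$. Condition (c) implies $E(K_v)/E_1(K_v)$ has order prime to $p$, and because $E_1(K_v)$ is $p$-torsion-free for $p\geq 3$, we conclude $E(K_v)[p^{\infty}]=0$ and that the $p$-adic completion of $E(K_v)$ is canonically $E_1(K_v)$. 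For $p\geq 3$ and $v$ unramified over $p$, the formal logarithm is an isomorphism of $\Zp$-modules $\log_{E,v}\colon E_1(K_v)\isomarrow \mathfrak{m}_v$, so summing over $v\mid p$ the target is a free $\Zp$-module of rank $[K:\Q]=2$.

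The heart of the argument is to show that $\Psi(P_0)$ is primitive in the target, where $\Psi$ denotes the localization map. Under the logarithm identification, multiplication by $p$ on $E_1(K_v)$ corresponds to multiplication by $p$ on $\mathfrak{m}_v$; because $\ord_v(p)=1$, we have $p\mathfrak{m}_v=\mathfrak{m}_v^2$, and therefore $x\in pE_1(K_v)$ if and only if $\ord_v(\log_{E,v}(x))\geq 2$. Applying this to the point $P$ supplied by condition (e) --- which lies in $E_1(K_w)$ for every $w\mid p$ and satisfies $\ord_v(\log_{E,v}(i_v(P)))=1$ --- we deduce $i_v(P)\notin pE_1(K_v)$, so $\Psi(P)$ is not in $p$ times the target. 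Writing $P=mP_0+\tau$ with $\tau\in E(K)_{\mathrm{tors}}$ (which maps to zero in the torsion-free target), one has $\Psi(P)=m\Psi(P_0)$; the primitivity of $\Psi(P)$ forces $m\in\Zp^{\times}$, and hence $\Psi(P_0)$ is itself primitive.

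It follows that $\Psi(E(K)\otimes\Zp)$ is a saturated rank-one submodule of a free $\Zp$-module of rank two, so $D$ is itself free of rank one over $\Zp$; in particular $\#\mathrm{Tors}_{\Zp}(D)=1$. There is no genuine obstacle: once the source and target are correctly identified using (a) and (c), condition (e) is tailor-made to supply a point mapping to a primitive element, and the only delicate input is the $\Zp$-linearity of the formal logarithm and its compatibility with multiplication by $p$, which is standard in our unramified setting for $p\geq 3$.
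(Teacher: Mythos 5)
Your proof is correct and, while it rests on the same essential observation as the paper's (condition (e) is precisely what makes the image of the localization map primitive in the target), you package it differently and more uniformly. The paper splits into two cases --- $p$ split and $p$ inert in $K/\Q$ --- and gives separate arguments: in the split case it shows one coordinate projection $\pi_1\circ\phi_p$ is surjective (and injective by the rank-one hypothesis), while in the inert case it invokes a lemma from \cite{FV} to extend the unit $Q=\phi_p(P)\in O_v^\times$ to a $\Zp$-basis of $O_v$. You instead observe directly that the unramifiedness $\ord_v(p)=1$ gives the clean equivalence $x\in pE_1(K_v)\iff\ord_v(\log_{E,v}x)\geq 2$, so condition (e) hands you an element $\Psi(P)$ that is not in $p$ times the rank-two free target; writing $P=mP_0+\tau$ and killing the torsion then forces $m\in\Zp^\times$, so the image $\Zp\Psi(P_0)$ is a saturated rank-one submodule and $D$ is free. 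The gain is a single uniform argument that never distinguishes the split from the inert case, at the cost of spelling out the standard fact that a primitive element of a free $\Zp$-module generates a direct summand (which the paper handles concretely via \cite{FV} in the inert case). One small point worth making explicit in your write-up: the identification of the $p$-adic completion of $E(K_v)$ with $E_1(K_v)$ uses that $E$ has good reduction at $v\mid p$ (so $E(K_v)=E_0(K_v)$) together with condition (c), and the exactness of $p$-adic completion on this short exact sequence is exactly what the paper cites from \cite{CM} lemma 6.
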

\begin{proof}

If $M$ is an abelian group we write $M^*=\ilim M/p^nM$ for its $p$-adic completion ($p$ is the prime in the proposition). Also will denote $\cap_{w |p} i_w^{-1}(E_1(K_w))$ by $E_1(K)$.

Let $\psi_p: E(K)^* \to \prod_{v |p} E(K_v)^*$ be the map in the proposition. We need to prove that $\text{Tors}_{\Zp}(\coker(\psi_p))=\{0\}$. We proceed as in \cite{CM} lemma 9. Since $\prod_{v |p} E(K_v)/E_1(K_v)$ is finite, therefore by \cite{CM} lemma 6 we have an exact sequence
$$0 \longrightarrow \prod_{v |p} E_1(K_v)^* \longrightarrow \prod_{v |p} E(K_v)^* \longrightarrow \prod_{v |p} (E(K_v)/E_1(K_v))^* \longrightarrow 0.$$
$E(K)/E_1(K)$ injects into $\prod_{v |p} E(K_v)/E_1(K_v)$ so is finite. Therefore, we also get an exact sequence
$$0 \longrightarrow E_1(K)^* \longrightarrow E(K)^* \longrightarrow (E(K)/E_1(K))^* \longrightarrow 0.$$
Condition (c) of theorem \ref{Euler_char_theorem} implies that $\prod_{v | p} (E(K_v)/E_1(K_v))^*$ is trivial. This also in turn proves that $(E(K)/E_1(K))^*$ is also trivial. So we see from the above exact sequences that we have isomorphisms $E_1(K)^* \cong E(K)^*$ and $\prod_{v |p} E_1(K_v)^* \cong \prod_{v | p} E(K_v)^*$. Therefore we see that if $\psi'_p: E_1(K)^* \to \prod_{v | p} E_1(K_v)^*$ is the map induced by $\psi_p$, then to prove the proposition we only need to show that $\text{Tors}_{\Zp}(\coker(\psi'_p))=\{0\}$.

Since $p$ is odd and unramified in $K/\Q$, therefore from \cite{Silverman} ch. 4 th. 6.4 we have that for any $v$ above $p$ the map $\log_{E,v}: E_1(K_v) \to pO_v$ is an isomorphism ($O_v$ is the ring of integers in $K_v$). Composing this isomorphism with multiplication by $p^{-1}$ we get an isomorphism $E_1(K_v) \cong O_v$. Noting that $O^*_v=O_v$, this isomorphism induces an isomorphism $E_1(K_v)^* \cong O_v$. Finally composing the map $\psi'_p$ with this last isomorphism, we get a map $\phi_p: E_1(K)^* \to \prod_{v | p} O_v$ and we will show that $\text{Tors}_{\Zp}(\coker(\phi_p))=\{0\}$. Note that the map $\phi_p$ is a $\Zp$-module homomorphism. Let $\theta: E(K) \to E(K)^*$ be the natural map. Then condition (e) of theorem \ref{Euler_char_theorem} translates to: there exists $P \in E_1(K)$ such that for some $i$ we have $\pi_i(\phi_p(\theta(P)))$ is a unit in $O_v$ ($\pi_i$ is the projection from $\prod_{v | p} O_v$ onto its $i$-th component).

Now consider 2 cases. First assume that $p$ splits in $K/\Q$. Let $v_1$ and $v_2$ be the primes of $K$ above $p$. In this case we have a map $\phi_p: E_1(K)^* \to O_{v_1} \times O_{v_2} = \Zp \times \Zp$. Condition (e) of theorem \ref{Euler_char_theorem} implies that $\pi_i \circ \phi_p: E_1(K)^* \to \Zp$ is surjective for $i=1$ or $i=2$. Without loss of generality assume that it is surjective for $i=1$. Let $(a,b) \in \Zp \times \Zp$ such that $(ra,rb) \in \img(\phi_p)$ for some $r \in \Zp \backslash \{0\}$. We must show that $(a,b) \in \img(\phi_p)$. Let $Q \in E_1(K)^*$ be such that $\phi_p(Q) = (ra,rb)$. Also since $\pi_1(\phi_p)$ is surjective there exists $P \in E_1(K)^*$ such that $\phi_p(P)=(a,c)$ for some $c \in \Zp$. We now note that $\pi_1 \circ \phi_p$ is injective. This follows from the work of Kolyvagin \cite{Kolyvagin} which shows that under condtion (a) of theorem \ref{Euler_char_theorem} $E(K)$ has rank 1. This in turn also implies that $E_1(K)$ has rank 1 since $E(K)/E_1(K)$ is finite. Then we have $\pi_1(\phi_p(rP))=r\pi_1(\phi(P))=ra=\pi_1(\phi_p(Q))$ which implies that $rP=Q$ since $\pi_1 \circ \phi_p$ is injective. Therefore $(ra,rc)=(ra,rb)$ so $b=c$ showing that $(a,b) \in \img(\phi_p)$ as desired.

Now consider the case when $p$ is inert in $K/\Q$. In this case we have a map $\phi_p: E_1(K)^* \to O_v$ where $v$ is the prime of $K$ above $p$. Note that $O_v$ is free of rank 2 over $\Zp$. Condition (e) of theorem \ref{Euler_char_theorem} implies that there exists a $P \in E_1(K)^*$ such that $Q=\phi_p(P) \in O_v^{\times}$. Then by \cite{FV} ch. 2 prop 2.4 there exists $Q' \in O_v$ such that $Q$ and $Q'$ form a basis for the free $\Zp$-module $O_v$. Since $\img(\phi_p)=\Zp Q$, therefore we easily see from this that $\text{Tors}_{\Zp}(\coker(\phi_p))=\{0\}$ as desired. This completes the proof of the proposition.
\end{proof}

The table below lists examples chosen to satisfy the conditions of theorem \ref{Euler_char_theorem}. The columns of the table are as follows: $E$ is an elliptic curve defined over $\Q$ with the given Cremona labeling \cite{Cremona}, $D$ is a fundamental discriminant such that $K=\sqrt{D}$, $p$ is a prime, $(D/p)$ is the Legendre symbol which tells us whether our unramified prime $p$ splits in $K/\Q$ ($(D/p)=1$) or is inert in $K/\Q$ ($(D/p)=-1$) and the last column is the integer $a_p=p+1-\tilde{E}(\Fp)$. Since all the entries in the table have $p \geq 5$, therefore the elliptic curves $E$ in the table with $a_p=0$ are precisely the ones with supersingular reduction at $p$. In addition to satisfying the conditions of theorem \ref{Euler_char_theorem}, the entries in the table were chosen such that $(E,p)$ satisfies $(*)$. All of the computations for the table were performed in SAGE \cite{Sage}.\\

\begin{center}
\begin{tabular}{| c | c | c | c | c |}
\hline
$E$ & $D$ & $p$ & $(D/p)$ & $a_p$ \\ \hline
11a1 & -7 & 13 & -1 & 4 \\ \hline
11a1 & -7 & 29 & 1 & 0 \\ \hline
17a1 & -8 & 11 & 1 & 0 \\ \hline
17a1 & -8 & 41 & 1 & -6 \\ \hline
43a1 & -7 & 17 & -1 & -3 \\ \hline
43a1 & -7 & 37 & 1 & 0 \\ \hline
53a1 & -11 & 19 & -1 & -5 \\ \hline
53a1 & -11 & 751 & 1 & 0 \\ \hline
57a1 & -8 & 13 & -1 & 2 \\ \hline
57a1 & -8 & 17 & 1 & -1 \\ \hline
57a1 & -8 & 37 & -1 & 0 \\ \hline
58a1 & -7 & 11 & 1 & -1 \\ \hline
58a1 & -7 & 23 & 1 & 0 \\ \hline
58a1 & -23 & 13 & 1 & 3 \\ \hline
58a1 & -23 & 139 & 1 & 0 \\ \hline
75a1 & -11 & 17 & -1 & 2 \\ \hline
75a1 & -11 & 79 & -1 & 0 \\ \hline
99a1 & -35 & 17 & 1 & 2 \\ \hline
99a1 & -35 & 71 & 1 & 0 \\ \hline
\end{tabular}
\end{center}

\end{document}